
\documentclass[a4paper,10pt]{paper}

\textwidth 16cm
\oddsidemargin -0cm

\PassOptionsToPackage{x11names}{xcolor}
\usepackage{graphicx}
\usepackage{amsmath,amsthm}
\usepackage{amssymb}
\usepackage{pgf}
\usepackage{multicol}
\usepackage{mathtools}
\usepackage{titlesec}
\usepackage{setspace}
\usepackage{changes}
\usepackage{comment}

\usepackage{verbatim}
\usepackage[utf8x]{inputenc} 
\usepackage{tikz}
\usepackage{tikz-network}

\usetikzlibrary{shapes.geometric}
\usetikzlibrary{calc}

\newcommand{\be}{\begin{eqnarray}}
\newcommand{\ee}{\end{eqnarray}}
\newcommand{\bd}{\begin{eqnarray*}}
\newcommand{\ed}{\end{eqnarray*}}

\newcommand{\RR}{\mathbb{R}}

\newtheorem{thm}{\small\bf Theorem}
\newtheorem{lem}{\small\bf Lemma}
\newtheorem{rem}{\small\bf Remark}

\newtheorem{df}{\small\bf Definition}
\newtheorem{exam}{\small\bf Example}
\newtheorem{prop}{\small\bf Proposition}

\setlength {\marginparwidth }{2cm} 
\colorlet{cpiotr}{RoyalBlue3}
\colorlet{cola}{Turquoise3}

\definechangesauthor[name={Piotr}, color=cpiotr]{Piotr}
\definechangesauthor[name={Ola}, color=cola]{Ola}

\newcommand{\padd}[2][]{\added[id=Piotr,remark=#1]{#2}}

\newcommand{\oadd}[2][]{\added[id=Ola,remark=#1]{#2}}

\newcommand{\ola}[1]{\todo[inline,color=cola]{\color{black}#1}}

\frenchspacing
\setlength {\marginparwidth }{2cm}

\title{Burgers' equation revisited: extension of mono-dimensional case on a network}
\author{Piotr Bogusław Mucha$^*$  \& Aleksandra Puchalska\thanks{Institute of Applied Mathematics and Mechanics, University of Warsaw, Banacha 2, 02-097 Warsaw, Poland}}
\date{}

\begin{document}

\maketitle

\centerline{{\em In memory of our friend Anton\'{i}n}}

\begin{abstract}
    The paper deals with the analysis of Burgers' equation on acyclic metric graphs. The main goal is to establish the existence of weak solutions in the $TV$ -- class of regularity. A key point is transmission conditions in vertices obeying the Kirchhoff law. First, we consider positive solutions at arbitrary acyclic networks and highlight two kinds of vertices, describing two mechanisms of flow splitting at the vertex. Next we design rules at vertices for solutions of arbitrary sign for any subgraph of hexagonal grid, which leads to a construction of general solutions with $TV$ -- regularity for this class of networks. Introduced transmission conditions are motivated by the change of the energy estimation. 
\end{abstract}

{\bf Keywords:} Burgers' equation, networks, hexagonal grid, transmission conditions in vertices, conservation laws, weak solutions, PDEs on metric graphs

\tableofcontents


\newpage

\section{Motivation}\label{sec:motiv}

The subject of this paper is the mono-dimensional, inviscid Burgers' equation which is the simplest model that begins the whole universe of systems of fluid dynamics. From the mechanical viewpoint it is pure transport of the velocity, modelling a creation of water waves. In the language of material derivative it reads $\frac{D}{Dt}u=0$, and by reformulating we obtain the well known equation
\begin{equation}\label{eq:BE}
    \partial_t u+u \partial_x u=0 \mbox{ \ \ on  \ } D \times [0,T).
\end{equation}
The theory says that starting from any smooth, compactly supported initial configuration, the solution does not have to be smooth but it can suffer a jump discontinuity. Thus, waves can create a shock, also called the gradient catastrophe. When we move to the weak formulation non-unique solutions are allowed, and either non-physical shocks or physical rarefaction waves appear. In order to keep the mathematical well-posedness the concept of entropy solutions has been introduced, and then both uniqueness property and decrease in time to zero is guaranteed. The shocks are governed by the Rankine-Hugoniot condition determining the speed of the jump 
and the Lax condition choosing between continuous and discontinuous solution. The general rule is that the bigger wave overtakes the smaller one, so consequently, for a long time we are not able to say anything about the smaller wave. 

In this paper we address the following question: Is there any approach to the Burgers' equation (\ref{eq:BE}) which admits certain preservation of the smaller wave after collision with the bigger one?
The answer is positive, but we are required to take $D$ as a graph.

\section{Problem formulation}\label{sec:BE}

The problem of inviscid Burgers equation on network does not appear itself in the literature, but its viscid counterpart was addressed both in theoretical considerations in \cite{Bressloff1997,Hinz2020} and in numerical simulations in \cite{B_numer}. Nevertheless, the problem fits to the general framework of conservation laws on networks that has been developed for about thirty years and still receives considerable interest  \cite{GarPic2009, rev_2014, Musch2022}. 

The majority of papers in this topic are related to traffic modelling, see for instance \cite{CGP, Holden2018, Goatin2020}, being the continuation of considerations on Lighthill-Whitham model \cite{Light-Whit1955}. The major difference of the mentioned approach compared to this paper is the appearance of non-convex flux which enforces the application of either wave-front tracking approximations or vanishing viscosity methods \cite{CocGar2010}. Considering pure Burgers' equation allows us to use methodology known from Hamilton-Jacobi equation \cite[Sec.~3.3]{Evans} and consequently to obtain an explicit solution being a counterpart of the well-known Lax-Oleinik formula.

Although all above papers refer to the domain of examination as the general network, their investigations concentrate on so-called star graphs. It is not the limitation in the sense of the existence theorems since preservation of finite velocity at finite networks does not allow for a blow up. Nevertheless, the global perspective related to network properties is missing. 

The main subject of this study is the development of a coherent language of description of fluid-type equations on metric graphs, that allows us to pursue from Burgers' equation to multi-dimensional systems like Navier-Stokes or compressible Euler. Incorporation of a full network structure into the model suggests the interpretation of a problem either as extension of mono-dimensional case or as non-standard discretization of a state space. A new structural approach allows us to address strictly mathematical questions like: the existence of new kinds of blows up or the loss of control of regularity of solutions. Successful development of the above theory gives  hope for alternative techniques for proving blowup and uniqueness criteria for the classical fluid systems. 
To this end, we begin in this paper with addressing two preliminary questions.

Firstly, what is the appropriate description of the flow in vertices? Since Burgers' equation is very close to both the Euler system \cite{FeNo} and the continuity equations \cite{NoPo}, the natural approach is to look at the change of the energy at redistribution points, namely taking the maximal or minimal change of the energy at vertices. The second question is the relation between the pure mono-dimensional case and the network counterpart.




\subsection{Graph theory toolbox}\label{sec:graph}
Consider $G=(V,E,\mathcal{L},\Phi)$ a directed, weighted and finite tree with no multiple edges. Namely, let 
$$
V:=\left\{v_i:\,i\in I\right\}, \mbox{ \ \ for \ \ } I=\left\{1,\ldots,n\right\},
\mbox{ \ \ and \ \ } E:=\left\{e_j:\,j\in J\right\}, \mbox{ \ \ for \ \ } J=\left\{1,\ldots,m\right\},
$$
be respectively \emph{sets of vertices and edges of a graph}; while $\mathcal{L}:E\rightarrow \mathbb{R}_+$ be a \emph{weight (length) function} of the edge; $e_j\mapsto l_j$ for any $j\in J$. 

The structure of the network is defined by \emph{incidence matrix} $\Phi\in M_{n\times m}(\mathbb{R})$, $\Phi=(\phi_{ij})_{i\in I,j\in J}=\Phi^+-\Phi^-$ such that $\Phi^+=(\phi^+_{ij})_{i\in I,j\in J}$ and $\Phi^-=(\phi^-_{ij})_{i\in I,j\in J}$ satisfy conditions
	\begin{equation*}
		\phi^{+}_{ij}=\left\{\begin{array}{ll}1 &\text{if}\quad \stackrel{e_j}{\rightarrow}v_i\\
0 &\text{otherwise},\end{array}\right.
\qquad \phi^{-}_{ij}=\left\{\begin{array}{ll}1 &\text{if}\quad v_i\stackrel{e_j}{\rightarrow}\\
0 &\text{otherwise}.\end{array}\right.
		\end{equation*}
If $\phi_{ij}\neq 0$, we say that edge $e_j$ is \emph{incident} to $v_i$. We say that there exists a multiple edge between vertices $v_i, v_k\in V$ if there exist two edges $e_p, e_q\in E$ such that, for $z=p,q$, $\phi^+_{kz}=1$ and $\phi^-_{iz}=1$. Hence, the lack of multiple edges provides a uniqueness of such assignment $e_j=(v_i,v_k)\in E$ for some $v_i,v_k\in V$. In further consideration we call $v_i$ a \textsl{head} and $v_k$ a \textsl{tail }of the edge $e_j$. The vertex $v_i$ is a \emph{source} or a \emph{sink} if respectively $\phi^+_{ij}=0$ or $\phi^-_{ij}=0$ for any $j\in J$. 
 
 By the \emph{path} in the graph we understand a finite sequence of edges $p_i=e_{k_1},\ldots,e_{k_{N_i}}$ such that for any $e_{k_j},e_{k_{j+1}}$ there exists a vertex $v_{k_j}\in V$ such that 
 $$\stackrel{e_{k_j}}{\rightarrow} v_{k_j} \stackrel{e_{k_{j+1}}}{\rightarrow}\quad  \text{(equivalently}\,\, \phi_{k_jk_j}^+=1=\phi^-_{k_jk_{j+1}}\text{)}$$
 for $j=1,...,l-1$. It means the path is of the following form
 $$
 v_{k_0} \stackrel{e_{k_1}}{\rightarrow} v_{k_1} \stackrel{e_{k_2}}{\rightarrow} v_{k_2} \stackrel{e_{k_3}}{\rightarrow} 
 ...  \stackrel{e_{k_{N_i-1}}}{\rightarrow} v_{k_{N_i-1}} \stackrel{e_{k_{N_i}}}{\rightarrow} v_{k_{N_i}}. 
 $$
 By the \emph{length} $N_i$ of a path $p_i$ we understand the number of edges on the path, while by \emph{weighted length} $L_i$ the weights' sum of all edges on the path.
 
 We say that a graph is \emph{connected} if there exists at least one path between every two vertices. A closed path, namely $v_{k_0}=v_{k_{N_i}}$, is a \emph{cycle} and the graph is called \emph{acyclic} if it has no cycles. Finally, we say that a graph is a \emph{directed tree} if it is connected and has no cycles. 
 
 In the following considerations we refer to the special examples of trees being a restriction of finite graphs. We say that $G'=(V',E', \mathcal{L}',\Phi')$ is a subgraph of a graph $G=(V,E,\mathcal{L},\Phi)$ if it satisfies the conditions
 \begin{equation*}
 V'\subseteq V,\quad E'=E|_{V'\times V'}, \quad \mathcal{L}|_{E'},\quad \Phi'=\Phi|_{I'\times J'},
 \end{equation*}
 where $I'=\left\{i\in I:\,\, v_i\in V'\right\}$ and $J'=\left\{j\in J:\,\,e_j\in E'\right\}$.
 \begin{df}\label{def:v-subgraph}
 Consider $G=(V,E,\mathcal{L},\Phi)$ and $v_i\in V$. We say that $G_i=(V_i,E_i,\mathcal{L}_i,\Phi_i)$ is a \emph{$v_i$-subgraph} of $G$ if
 \begin{equation}\label{eq:G_i}
     V_i:=\left\{v_j\in V:\,\,j\in J_i\right\},\qquad\text{and}\quad J_i:={\left\{j\in I:\,\, \phi_{ij}\neq 0\right\}}. 
 \end{equation}
 \end{df}

\begin{df}\label{def:p+h}
A \emph{path graph $P_m$} is any connected subgraph of 1D Cartesian grid $P=(V_P,E_P,\mathcal{L}_P,\Phi_P)$   
\begin{equation*}
    V_P=\left\{v_i:\,i\in \mathbb{Z}\right\},\quad E_P=\left\{e_j:\,i\in \mathbb{Z}\right\}, \quad \mathcal{L}_P\equiv 1,\quad \Phi_P=(\phi_{ij})_{i,j\in \mathbb{Z}},\,\,\phi_{ij}=\left\{\begin{array}{ll}
         \phantom{x}1& \text{for}\,\,i=j \\
         -1&\text{for}\,\,i=j-1\\
         \phantom{x}0&\text{otherwise}
    \end{array}\right.
    \end{equation*}
having \underline{ $m$ edges}. \newline
\indent By the \emph{honeycomb tree $H_m$} we understand any connected subgraph of directed hexagonal lattice $H=(V_H,E_H,\mathcal{L}_H)$
\begin{eqnarray*}
    &V_H=\left\{v_{(p+q,-q,p)},v_{(p+q+1,-q,p)}:\, p,q\in \mathbb{Z}\right\},\qquad \mathcal{L}_H=1,\\ &E_H=\left\{(v_{(p+q,-q,p)},v_{(p+q+1,-q,p)}),(v_{(p+q,-q,p)},v_{(p+q,-q+1,p)}), (v_{(p+q+1,-q,p)},v_{(p+q+1,-q,p+1)}):\, p,q\in \mathbb{Z}\right\}
\end{eqnarray*}
having \underline{$m$ edges}.  In further considerations we refer to $v_{(p+q,-q,p)}$ as vertex of the \emph{first kind} while to $v_{(p+q+1,-q,p)}$ as the vertex of the \emph{second kind}, see Fig. \ref{fig:T1&2}.
\end{df} 
 Note that any vertex of hexagonal lattice $H$ is described by a triple of type $(p+q,-q,p)$ with two parameters $p,q$, which corresponds to the three directions on the honeycomb.
 
 Define now the \emph{in- and out degree of vertex} $v_i$ which is the number of edges having respectively a tail or a head in vertex $v_i$, namely
 \begin{equation*}
     \text{deg}_{+}(v_i)=\sum_{j\in J}\phi^+_{ij},\qquad \text{deg}_{-}(v_i)=\sum_{j\in J}\phi^-_{ij}, \qquad\text{and}\qquad \text{deg}(v_i)=\text{deg}_{+}(v_i)+\text{deg}_{-}(v_i).
 \end{equation*}
 Then using the notation from Def. \ref{def:p+h}, we have for $p,q\in \mathbb{Z}$
 \begin{eqnarray*}
 &\text{deg}_{+}(v_{(p+q,-q,p)})=1,\qquad  \text{deg}_{-}(v_{(p+q,-q,p)})=2;\\
 &\text{deg}_{+}(v_{(p+q+1,-q,p)})=2,\qquad  \text{deg}_{-}(v_{(p+q+1,-q,p)})=1.
 \end{eqnarray*}
 Considering the restriction of $H$ to the subgraph we obtain also additional types of vertices $v$ being sources ($\text{deg}_{+}(v)=0$, $\text{deg}_{-}(v)\in\left\{1,2\right\}$), sinks ($\text{deg}_{+}(v)\in\left\{1,2\right\}$, $\text{deg}_{-}(v)=0$) or vertices of the path graph ($\text{deg}_{-}(v)=\text{deg}_{+}(v)=1$).

 Furthermore, we introduce a \emph{direction of a vertex} $v_i\in V$ as an ordered pair of sets $D_i=({D}^{in}_i,{D}^{out}_i)$, $D^{in}_i,D^{out}_i\subset E$ such that
\be\label{eq:dir}
D^{in}_i:=\left\{e_j\in E:\,\, \stackrel{e_j}{\rightarrow}v_i\right\}, \qquad \text{and}\qquad  D^{out}_i:=\left\{e_j\in E:\,\, v_i \stackrel{e_j}{\rightarrow}\right\}.
\ee
Since directed trees $G$ do not have loops, therefore $D_i^{in}\cap D_i^{out}=\emptyset$, for any $i\in I$. If we change the vertex $v_i$ into $v_i'$ in the way that the parameterization of all edges incident to the vertex $v_i$ become opposite, we say that $v_i$ and $v_i'$ have \emph{the opposite direction}. In the case of honeycomb trees the direction of verities of the first and second kind are, for $p,q\in \mathbb{Z}$, the following
\begin{eqnarray*}
    &D_{(p+q,-q,p)}=\left(D^{in}_{(p+q,-q,p)},D^{out}_{(p+q,-q,p)}\right)\qquad D^{in}_{(p+q,-q,p)}=\left\{(v_{(p+q,-q,p-1)},v_{(p+q,-q,p)})\right\},\\
    &D^{out}_{(p+q,-q,p)}= \left\{(v_{(p+q,-q,p)},v_{(p+q+1,-q,p)}), (v_{(p+q,-q,p)},v_{(p+q,-q+1,p)})\right\},\\
    &D_{(p+q+1,-q,p)}=\left(D^{in}_{(p+q+1,-q,p)},D^{out}_{(p+q+1,-q,p)}\right)\qquad D^{out}_{(p+q+1,-q,p)}=\left\{(v_{(p+q+1,-q,p)},v_{(p+q+1,-q,p+1)})\right\}\\
    &D^{in}_{(p+q+1,-q,p)}=\left\{(v_{(p+q,-q-1,p)},v_{(p+q+1,-q,p)}), (v_{(p+q-1,-q,p)},v_{(p+q+1,-q,p)})\right\}.
\end{eqnarray*}
The above distinction is crucial to the considerations in Section \ref{sec:gen_trans}.
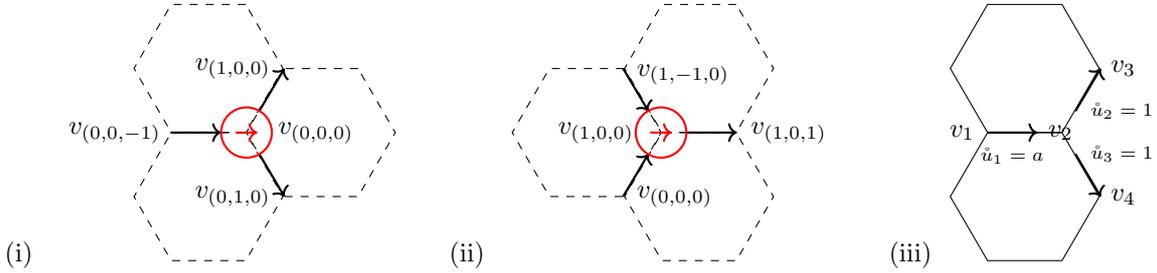
\begin{figure}[h]
\center
(i)\quad \begin{tikzpicture}
\def\tmp{1.7};
\foreach \x in {0,1}{\path[draw=gray,dashed] (0,\tmp*\x)
node[
regular polygon,
draw=none,
regular polygon sides=6,
draw,
inner sep=.6cm,
] (hexagon) {};}	
\path[draw=gray, dashed] (0.985*3/2,\tmp/2)
node[
regular polygon,
draw=none,
regular polygon sides=6,
draw,
inner sep=.6cm,
] (hexagon) {};	
	
\draw[->,thick] (-1/2,\tmp/2) -- node[below=2pt] {} (1/2-1/3,\tmp/2);
\draw[->,thick] (1/2+1/6,\tmp/2+\tmp/6) -- node[left=2pt] {} (1,\tmp);
\draw[->,thick] (1/2+1/6,\tmp/2-\tmp/6) -- node[right=1pt] {} (1,0);
\draw (1/2,\tmp/2) node[right]{\,\,\,\,\,$v_{(0,0,0)}$};	
\draw (-1/2,\tmp/2) node[left]{$v_{(0,0,-1)}$};	
\draw (1,\tmp) node[left]{\,$v_{(1,0,0)}$\,};	
\draw (1,0) node[left]{\,$v_{(0,1,0)}$\,};

\draw[red,thick] (1/2,\tmp/2) circle (1/3);
\draw[->,red,thick] (1/2-1/7,\tmp/2) -- node{} (1/2+1/7,\tmp/2);

\end{tikzpicture}\qquad (ii)\quad \begin{tikzpicture}
\def\tmp{1.7};
\foreach \x in {0,1}{\path[draw=gray,dashed] (0,\tmp*\x)
node[
regular polygon,
draw=none,
regular polygon sides=6,
draw,
inner sep=.6cm,
] (hexagon) {};}	
\path[draw=gray, dashed] (-0.985*3/2,\tmp/2) 
node[
regular polygon,
draw=none,
regular polygon sides=6,
draw,
inner sep=.6cm,
] (hexagon) {};
\draw[->,thick] (-1/6,\tmp/2) -- node[below=2pt] {} (1/2,\tmp/2);
\draw[->,thick] (-1,\tmp) -- node[left=2pt] {} (-2/3,2*\tmp/3);
\draw[->,thick] (-1,0) -- node[left=1pt] {} (-2/3,\tmp/3);
\draw (-1/2,\tmp/2) node[left]{$\tiny{v_{(1,0,0)}}$\,\,\,\,};			\draw (1/2,\tmp/2) node[right]{\,$\tiny{v_{(1,0,1)}}$};
\draw (-1,0) node[right]{\,$\tiny{v_{(0,0,0)}}$};
\draw[red,thick] (-1/2,\tmp/2) circle (1/3);
\draw (-1,\tmp/1.05) node[right]{\,$\tiny{v_{(1,-1,0)}}$};
\draw[->,red,thick] (-1/2-1/7,\tmp/2) -- node[left=1pt] {} (-1/2+1/7,\tmp/2);
\end{tikzpicture}\qquad (iii) \begin{tikzpicture}
\def\tmp{1.7};
\foreach \x in {0,1}{\path[draw=white] (0,\tmp*\x)
node[
regular polygon,
draw=none,
regular polygon sides=6,
draw,
inner sep=.6cm,
] (hexagon) {};}	
\draw[->,thick] (-1/2,\tmp/2) -- node[below=2pt] {\scriptsize{$\mathring{u}_1=a$}} (1/2-1/3,\tmp/2);
\draw[->,thick] (1/2+1/6,\tmp/2+\tmp/6) node[right=2pt] {\scriptsize{$\mathring{u}_2=1$}} --  (1,\tmp);
\draw[->,thick] (1/2+1/6,\tmp/2-\tmp/6) node[right=2pt] {\scriptsize{$\mathring{u}_3=1$}} --  (1,0);

\draw (-1/2,\tmp/2) node[left=1pt] {$v_1$};	
\draw (1/2-1/3,\tmp/2) node[right=.5pt] {$v_2$};
\draw (1,\tmp) node[above=0.5pt, right=.25pt] {$v_3$};
\draw (1,0) node[below=0.5pt, right=.25pt] {$v_4$};
\end{tikzpicture}

	\caption{Two kinds of vertices in honeycomb trees $H_{15}$. (i) $v_{(0,0,0)}$ is of the first kind  (ii) $v_{(1,0,0)}$ is of the second kind. Vertices' direction is denoted in the symbolic way in red. In (iii) a metric honeycomb tree $\mathcal{H}_3$ introduced in Example \ref{exam:uniq_honey}.}
		\label{fig:T1&2}
\end{figure}

 Finally, let us remind that for any tree it is possible to re-enumerate edges in the way that for any two edges $e_{s}, e_{j}\in E$, and for any chosen path $e_{s}=e_{k_1},\ldots,e_{k_N}=e_{j}$; $k_i<k_{i+1}$ for all $i\in 1,\ldots,N-1$. Additionally in the following considerations we chose the enumeration of edges in the way that all sources are associated with the first few edges, namely sources are heads of the edges $e_{i}$, $i=1,\ldots, s$. We call such numeration \emph{an increasing order of edges} and note that two trees with an increasing order of edges are homomorphic.
 
 \subsection{Introduction of  metric graphs}\label{sec:m_graph}
 
 To introduce a metric space into consideration we associate each edge of a graph with a compact interval in the following way for $d:E\rightarrow \mathcal{B}(\mathbb{R})$ let $d(e_j)=[0,l_j]$. We say that $\mathcal{G}=(G,d)$ is a \emph{directed metric graph}. In what follows we always consider the parametrisation of an edge that agrees with the direction of an edge. By an abuse of notation we shall denote a metric edge $d(e_j)$ simply by $e_j$, the vertices at the endpoints of the edge $e_j=(v_i,v_k)$ by $e_j(0):=v_i$ and $e_j(l_j):=v_k$. Further, when considering a function $f_j$ defined on the metric edge $d(e_j)=[0, l_j]$, we shall occasionally write $f(v_i):=f (s)$ if $e_j(s)=v_i$ for $s=0,l_j$. By the function defined on the metric graph we understand a vector-valued function $f:[0,1]\rightarrow \mathbb{R}^m$ such that $f(x)=(f_j(l_jx))_{j\in J}$, where $f_j:[0,l_j]\rightarrow \mathbb{R}$ is defined on the edge $e_j$. 

\smallskip 

The main idea of this paper is to find the function defined on the metric graph that satisfies both the weak formulation of Burgers' equation on edges and certain transmission conditions in vertices. Based on general knowledge of the mono-dimensional case, it is obvious that the direction of a flow can disagree with the parameterization of an edge. Although it does not cause a difficulty on the edge, it complicates transmission conditions. 
To define well conditions in vertices we extend the classical notion of \emph{weighted adjacency matrix of a line graph} $\mathcal{B}=(b_{ij})_{i,j\in J}$, which in the standard setting reads 
	\begin{equation}\label{eq:adj}
		b_{jk}\neq 0 \quad \text{if}\quad \exists_{v_i}\,\,\stackrel{e_k}{\rightarrow}v_i\stackrel{e_j}{\rightarrow}\qquad\text{and}\qquad b_{jk}=0\quad \text{otherwise}.
		\end{equation}
Consider the following operators $\mathcal{B}^{pq}=(b_{jk}^{pq})_{j,k\in J}$, 
for $p,q\in \{0,1\}$ 
such that
\begin{subequations}\label{eq:adj'}
\begin{align}\label{eq:B01}
		b_{jk}^{01}&\geq 0 \quad \text{if}\quad \exists_{v_i}\,\,\stackrel{e_k}{\rightarrow}v_i\stackrel{e_j}{\rightarrow}\qquad\text{and}\qquad b_{jk}^{01}=0\quad \text{otherwise};\\ \label{eq:B00}
		b_{jk}^{00}&\geq 0 \quad \text{if}\quad \exists_{v_i}\,\,\stackrel{e_k}{\leftarrow}v_i\stackrel{e_j}{\rightarrow}\qquad\text{and}\qquad b_{jk}^{00}=0\quad \text{otherwise};\\ \label{eq:B10}
			b_{jk}^{10}&\geq 0 \quad \text{if}\quad \exists_{v_i}\,\,\stackrel{e_k}{\leftarrow}v_i\stackrel{e_j}{\leftarrow}\qquad\text{and}\qquad b_{jk}^{10}=0\quad \text{otherwise};\\ \label{eq:B11}
				b_{jk}^{11}&\geq 0 \quad \text{if}\quad \exists_{v_i}\,\,\stackrel{e_k}{\rightarrow}v_i\stackrel{e_j}{\leftarrow}\qquad\text{and}\qquad b_{jk}^{11}=0\quad \text{otherwise}.
\end{align}
\end{subequations}
Note that the new approach to adjacency matrix definition given in \eqref{eq:adj'}, unlike the classical one \eqref{eq:adj}, allows for the lack of flow between two edges even though they are physically connected. Obviously $\mathcal{B}^{01}=\left(\mathcal{B}^{10}\right)^T$, but we distinguish those cases due to its different meaning in the sense of flow. Note that if $b_{jk}^{01}\geq 0$, then using the notation from \eqref{eq:B01} and \eqref{eq:dir}, $e_k\in D^{in}_i$ and $e_j\in D^{out}_i$. On the other hand for $b_{jk}^{10}\geq 0$, $e_j\in D^{in}_i$ and $e_k\in D^{out}_i$. Consequently, in the first case the direction of vertex $v_i$ is opposite to the direction of the vertex in the second case. 

If we replace $1$ with arbitrary nonzero coefficients in matrices $\mathcal{B}$, $\mathcal{B}^{pq}$ we arrive at unweighted counterparts of matrices, we call them \emph{adjacency matrices of a line graph}, and denote them by $\mathcal{\overline{B}}$, $\mathcal{\overline{B}}^{pq}$.

Due to the change in the definition of adjacency matrices, it is possible to find a path in the metric graph in which there is no possibility of flow from one edge, say $e_k$, to another $e_j$ due to vanishing of coefficients $b_{jk}^{pq}$, $p,q\in\left\{0,1\right\}$, $j,k\in J$. Therefore in the whole paper we distinguish the definition of path in the graph $G$ and in its metric counterpart $\mathcal{G}$. By the \emph{path} in the metric graph we understand a finite sequence of edges $p_i=e_{k_1},\ldots,e_{k_{N_i}}$ such that for any $e_{k_j},e_{k_{j+1}}$ there exists a pair $(p,q)$, $p,q\in\left\{0,1\right\}$ such that $b_{k_{j+1}k_j}^{pq}\neq 0$. The notions of path length $N_i$ and weighted path length $L_i$ remain unchanged.

\subsection{Burgers' equation on the network}\label{sec:NBE}
Let us defined Burgers' equation on the metric graph $\mathcal{G}$ in the line with motivation, see equation \eqref{eq:BE},
\begin{equation}
    \partial_t u+u \partial_x u=0 \mbox{ \ \ on  \ } \mathcal{G} \times [0,T).
\end{equation}
Namely, let $u=(u_j(l_j\cdot))_{j\in J}$ be the function defined on the metric graph $\mathcal{G}$ which satisfies 
\begin{subequations}\label{eq:Burgers}
\begin{align}
    \partial_t u_j(x,t)+u_j(x,t)\partial_x u_j(x,t)&=0, &x\in [0,l_j],\, t\in[0,T),\label{eq:B_eq}\\
u_j(x,0)&=\mathring{u}_j(x), &x\in[0,l_j],\label{eq:B_init}
\end{align}
\end{subequations}
for every coordinate $j\in J$. Now let us derive the transmission conditions that incorporate the network structure into the formulation from one hand, and allow for the flow that agrees with the physical motivation from another.

Let us start with the formulation of transfers that comes from the generalisation of vertex conditions for network transport, see \cite[Sec. 3a]{KFP}. Consider operators $u \mapsto \mathcal{B}_z(u)\in M_{m}(\mathbb{R})$, $z=0,1$ and for almost all $t\in[0,T)$ assume that 
\begin{equation}\label{eq:GTC}
\mathcal{B}_0(u)u(0,t)+\mathcal{B}_1(u) u(1,t)=0, \qquad \text{with}\quad \mathcal{B}_z=\mathcal{B}^{z0}+\mathcal{B}^{z1},\quad \text{defined in \eqref{eq:adj'}.}
\end{equation}
Obviously such a general formulation has to be specified for a number of reasons. Even in the linear case, when $\mathcal{B}_0, \mathcal{B}_1$ are independent of $u$, the uniqueness of the solution to \eqref{eq:GTC} strictly depends on their rank. Furthermore, there is no clear relation with a graph structure because again for arbitrary operators $\mathcal{B}_0,\mathcal{B}_1\in M_{m}(\mathbb{R})$, it is not always possible to build the graph,  not mentioning  the directed tree that is the object of these considerations. For details see \cite{BF2015}. 

Let us draw your attention to one property that is important in further considerations. If the direction of flow disagrees with the parametrization it may allow for a cyclic flow along the edges even though the graph is a directed tree.
\begin{exam}\label{exam:circle}
Consider a graph $G=(V,E,\mathcal{L},\Phi)$ such that 
\begin{equation}\label{eq:circle}
    V=\left\{v_i:\,i=1,2,3\right\},\quad E=\left\{e_j:\,j=1,2,3\right\},\quad L\equiv 2\pi \quad \text{and}\quad \Phi=\left[\begin{array}{ccc}
    -1&0&-1\\1&-1&0\\0&1&1\end{array}\right],
\end{equation}
presented also in Figure \ref{fig:circle}. Problem \eqref{eq:Burgers} -- \eqref{eq:GTC} such that
\begin{equation*}
    \mathcal{B}_0=\left[\begin{array}{ccc}
    1&0&1\\0&-1&0\\0&0&0\end{array}\right],\quad \mathcal{B}_1=\left[\begin{array}{ccc}
    0&0&0\\1&0&0\\0&1&1\end{array}\right],\quad\text{and}\quad \mathring{u}_1,\mathring{u}_2>0,\quad \mathring{u}_3<0.
\end{equation*}
is equivalent locally in time with the Burgers' equation on the circle with radius $r=3$.
\end{exam}

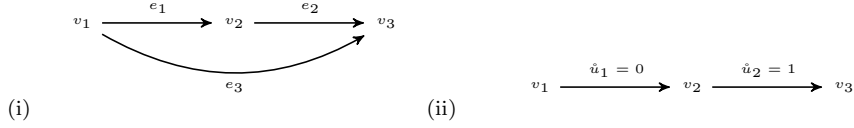
\begin{figure}[h]
\begin{center}
\begin{tikzpicture}[->,>=stealth',shorten >=1pt,auto,node distance=1.5cm,semithick]
\tikzstyle{every state}=[fill=none, draw=none, text=black]

\node (A) at (0,0) {\tiny{$v_1$}};
\node (B) at (2,0) {\tiny{$v_2$}};
\node (C) at (4,0) {\tiny{$v_3$}};

\path (A) edge node [above] {\tiny{$e_1$}} (B);
\path (B) edge node  [above] {\tiny{$e_2$}} (C);
\path (A) edge [bend right] node [below] {\tiny{$e_3$}} (C);
\end{tikzpicture} \qquad \qquad
\begin{tikzpicture}[->,>=stealth',shorten >=1pt,auto,node distance=1.5cm,semithick]
\tikzstyle{every state}=[fill=none, draw=none, text=black]

\node (A) at (0,0) {\tiny{$v_1$}};
\node (B) at (2,0) {\tiny{$v_2$}};
\node (C) at (4,0) {\tiny{$v_3$}};

\path (A) edge node [above] {\tiny{$\mathring{u}_1=0$}} (B);
\path (B) edge node  [above] {\tiny{$\mathring{u}_2=1$}} (C);
\end{tikzpicture} \end{center} \vspace{-.5cm}
\hspace{2cm}\footnotesize{(i)} \hspace{5cm} \footnotesize{(ii)}
\caption{Illustration depicts two network structures: the first is a graph $G$ defined in \eqref{eq:circle} and considered in Example \ref{exam:circle}; while the second a metric path graph $\mathcal{P}_2$ introduced in Example \ref{exam:uniq_path}.}
		\label{fig:circle}
\end{figure}
In the Example \ref{exam:circle} the cyclic structure appeared due to the disturbance of a flow in vertices $v_1$ and $v_3$. Note that the direction of the vertex $v_1$ is $D_1=(D_1^{in},D^{out}_1)=\left(\emptyset,\left\{e_1,e_2\right\}\right)$ while the mass flows from the edge $e_3$ into $e_1$. Similar problem appeared in $v_3$. In the further considerations we allow for the flow to go in line with the vertex direction and in the opposite direction. We assure, however, that there is no exchange of mass between edges in the sets $D_i^{in}$ (as well $D_i^{out}$) for any $i\in I$, namely
\begin{equation}\label{eq:n_return}
    \mathcal{B}^{00}=\mathcal{B}^{11}=0.
\end{equation}
Let us now fix the vertex $i\in I$, the moment $t\in[0,T)$ and we consider two cases. If the flow at $t$ agrees with the direction of a vertex. Then the transmission conditions in vertex $v_i$ read
\begin{equation}\label{eq:trans+}
    u_j(0,t)=\sum_{\left\{s\in J:\,e_s\in D_i^{in}\right\}}\,b^{01}_{js}(u)u_s(1,t),\qquad \text{for}\,\,j\in J\,\,\text{such that}\,\,\phi_{ij}^-\neq 0, 
\end{equation}
where $(b^{01}_{js})_{j,s\in J}$ is the adjacency matrix defined in \eqref{eq:B01}. Similarly, for the flow opposite to the vertex direction we have
\begin{equation}\label{eq:trans-}
    u_j(1,t)=\sum_{\left\{s\in J:\,e_s\in D_i^{out}\right\}}\,b^{10}_{js}(u)u_s(0,t),\qquad \text{for}\,\,j\in J\,\,\text{such that}\,\,\phi_{ij}^+\neq 0,
\end{equation}
with $(b^{10}_{js})_{j,s\in J}$ is the adjacency matrix defined in \eqref{eq:B10}. In particular we note that for considered problem matrices $\mathcal{B}^{00}$ and $\mathcal{B}^{11}$ defined respectively in \eqref{eq:B00} and \eqref{eq:B11}, vanish. 

\begin{df}\label{def:conserv}
We say that system \eqref{eq:Burgers}--\eqref{eq:trans+}--\eqref{eq:trans-} is the strong formulation of Burgers' equation on the metric tree $\mathcal{G}$.
\end{df}
The above definition is formal, still the relation $\mathcal{B}(u)$ is not given.
In order to move from strong to weak formulation we introduce a set of smooth functions over $\mathcal{G}$. Namely, the functions smooth over the edges which agree on germs given in each  vertex $v_i$; with the neighbourhood oriented in line with direction $D_i$. Below we give a weaker definition, which always allows  determining  the differentiation by parts.


\begin{df}\label{def:smooth}
We say that $f=(f_j(l_j\cdot))_{j\in J}$ defined on the metric graph $\mathcal{G}$ is smooth on $\mathcal{G}$, and we write $f\in C^\infty(\mathcal{G})$, if the following conditions hold
\begin{enumerate}
    \item[(i)]  $f_j(l_j\,\cdot)\in C^\infty[0,l_j]$ \hspace{.4cm}for any $e_j \in E$;
    \item[(ii)] for any $v_i\in V$, and any $k \in \mathbb{N}$
$$
\partial^{(k)}f_{j}(l_j \,\cdot)=\partial^{(k)}f_{k}(0) 
\mbox{ \ \ for all \ \ } e_j \in D^{in}_i \mbox{ \ and \ } 
e_k \in D^{out}_{i}.
$$
\end{enumerate}


\end{df}
Consider now a function $\phi:[0,1]\times [0,\infty)\rightarrow \mathbb{R}^m$ 
, $\phi(\cdot,t)\in C^\infty(\mathcal{G})$. In what follows the product of two vector functions is understood in the sense of the Hadamard product, namely $fg=(f_jg_j)_{j\in J}$. Now define integration over the metric graph $\mathcal{G}$ as the sum of the integrals over all edges of a graph, namely for any integrable function $f=(f_j)_{j\in J}$ defined on $\mathcal{G}$
\begin{equation}\label{eq:int_G}
    \int_{\mathcal{G}}f(x)dx=\sum_{j\in J}\int_0^{l_j}f_j(x)dx.
\end{equation}
The weak solution $u$ should satisfy the condition
\begin{equation}\label{eq:weak}
  \int_0^T  \int_{\mathcal{G}} \left( u \partial_t \phi + \frac{u^2 \partial_x}{2}\phi\right) dxdt =
    \int_{\mathcal{G}} \mathring{u}(x)\phi(x,0)dx,
\end{equation}
for some $t\in[0,\infty)$.  Let us put our attention on the definition of the integral over $\mathcal{G}$. To pass from \eqref{eq:weak} to the strong from of the equation we  put the $x$ derivative on the equation, namely, we consider
\begin{eqnarray}
    \int_{\mathcal{G}} \frac{u^2 \partial_x\phi}{2} dx&=& 
    \sum_{j\in E} \left(\left.\frac{ u_j^2\phi_j}{2}\right|_{x=l_j}-\left.\frac{ u_j^2\phi_j}{2}\right|_{x=0}
    -\int_{[0,l_j]}u_j\partial_x u_j\phi_j\right)\\ \label{eq:parts}
    &=&-\int_{\mathcal{G}} u\partial_x u\phi+\sum_{i\in I}\left(\sum_{\left\{j\in J:\, e_j\in D_i^{in}\right\}}\left.\frac{ u_j^2\phi_j}{2}\right|_{x=l_j}-\sum_{\left\{j\in J:\, e_j\in D_i^{out}\right\}}\left.\frac{u_j^2\phi_j}{2}\right|_{x=0}\right).
\end{eqnarray}
So  to eliminate  boundary terms at each vertex $v_i$, using the Def. \ref{def:smooth}ii) for $k=0$, we require that
\begin{equation}\label{eq:Kirch}
 \sum_{\left\{j\in J:\, e_j\in D_i^{in}\right\}}\frac{u_j^2(l_j,t)}{2}\quad =\sum_{\left\{j\in J:\, e_j\in D_i^{out}\right\}}\frac{u_j^2(0,t)}{2}\qquad\text{for almost all}\,\, t\in[0,\infty).
\end{equation}
Equation \eqref{eq:Kirch}, known as the Kirchhoff condition, is one of the most classical transmission conditions considered on metric graphs, see \cite[Sec.~2.2.1]{Mugnolo2014}. It describes the conservation of flux in each vertex of a network.

\begin{df}\label{def:conserv}
We say that system \eqref{eq:weak}--\eqref{eq:trans+}--\eqref{eq:trans-} is the weak formulation of Burgers' equation on the metric tree $\mathcal{G}$, if weighted adjacency matrices of a line graph $\mathcal{B}^{pq}$, $p,q=0,1$, satisfy conditions \eqref{eq:n_return},\eqref{eq:Kirch}.
The class of solutions to the problem in weak  formulation we denote by $B(\mathcal{G})$.
\end{df}

The hyperbolic character of  Burgers' equation makes  determine the behaviour at vertices  to obtain the transmission condition for incoming characteristics, i.e. the coefficients of matrices $\mathcal{B}^{01}$ and $\mathcal{B}^{10}$. 
In our setting we are obliged to take into account two restrictions. The first one is the Kirchhoff condition (\ref{eq:Kirch}) while the second is the requirement that dynamic on graph $\mathcal{G}$ is acyclic, namely \eqref{eq:n_return}. Note that the determination of a solution, even under the above restrictions, is not unique. To make the solver of our equation on $\mathcal{G}$ well posed, there is a need to impose more conditions. The general case is rather complex, so in this paper we concentrate  on two examples: the equation with non-negative velocities, and the general velocities on the honeycomb tree, see Def. \ref{def:p+h}. In the last case, the geometry of vertices is simple enough to consider all possible flow variations in vertices. It also gives some intuitions for the more general case.

The article is organised as follows. Section \ref{sec:posit_NBE} concentrates on non-negative case. The coefficients of $\mathcal{B}^{10}$ are related with the change of energy of the solution, see Subsection \ref{sec:posit_BC}, while the existence result in Theorem \ref{thm:p_exist} is derived using methodology known from Hamilton-Jacobi equation, it is our first main result. In Section \ref{sec:gen_NBE} general velocities on honeycomb trees are considered. The generalisation of energy methods applied to the vertices of the first and second kind, see Definition \ref{def:p+h}, with arbitrary direction of a flow in vertex can be found in Section \ref{sec:gen_trans} while the existence result in Subsection \ref{sec:gen_exist}, the second main result is stated as Theorem \ref{thm:main}. Finally, in Section \ref{sec:concl} we refer to the motivating example of wave interference. 


\section{Non-negative entropy solutions}\label{sec:posit_NBE}

In this section the analysis is restricted to the flow direction that agrees with the parameterization of edges. Consequently, we look for weak solutions such that for $\mathring{u}>0$ the solution remains in the non-negative cone, $u\geq 0$. Considerations in the Section \ref{sec:posit_BC} relate coefficients of $\mathcal{B}^{01}(u)$ with some properties of the solution $u$ while in Section \ref{sec:posit_exist} we derive the existence theorem to the problem of a form
\begin{subequations}\label{eq:p_Burgers}
\begin{align}\label{eq:p_Burgers_e}
   \sum_{j\in J}\int_0^T  \int_{0}^{l_j} \left(u_j \partial_t \phi_j + \frac{u_j^2 \partial_x\phi_j}{2}\right) dxdt&=
    \sum_{j\in J}\int_{0}^{l_j} \mathring{u}_j(x)\phi_j(x,0)dx,\\[.2cm] \label{eq:p_Burgers_ic}
u_j(x,0)&=\mathring{u}_j(x)>0, \qquad x\in[0,l_j],\,j\in J,\\[.1cm] \label{eq:p_Burgers_bc} 
 u_j(0,t)&=\sum_{\left\{s\in J:\,e_s\in D_i^{in}\right\}}\,b^{01}_{js}(u) u_s(1,t),\qquad \text{for}\,\,\phi_{ij}^-\neq 0, \\[.1cm] \label{eq:p_Burgers_Kc}
     \sum_{\left\{j\in J:\, e_j\in D_i^{in}\right\}} u_j^2(l_j,t)&=\sum_{\left\{j\in J:\, e_j\in D_i^{out}\right\}} u_j^2(0,t),\qquad \text{for almost all}\, t\in[0,T].
\end{align}
\end{subequations}
Before we go through the details let us formalise the notion of non-negative solution.

\begin{df}\label{def:def_sws}
We say that function $u$ is a non-negative weak solution of network Burgers' equation \eqref{eq:p_Burgers} if 
\begin{itemize}
\item[(i)] $t\mapsto u_j(\cdot,t)\in L^{\infty}([0,l_j],\RR)$ is continuous almost everywhere on $[0,T)$, for $T>0$;
\item[(ii)] for every $\phi(\cdot,t) \in C^{\infty}(\mathcal{G})$ 
$u$ satisfies \eqref{eq:p_Burgers_e},
\item[(iii)] $u\geq 0$ for every $\mathring{u}\in L^{\infty}([0,1],\mathbb{R}_+^m)$,
\item[(iv)] $u$ satisfies transmission conditions \eqref{eq:p_Burgers_bc} -- \eqref{eq:p_Burgers_Kc}.
\end{itemize}
\end{df}

\subsection{Derivation of transmission conditions}\label{sec:posit_BC}

The aim of this part is to understand how to derive coefficients of matrix $\mathcal{B}^{01}(u)$ in \eqref{eq:p_Burgers_bc}, hence in the whole Subsection \ref{sec:posit_BC} referring to the network Burgers' equation we consider the problem 
\begin{equation} \label{eq:p_net_Burgers}
\eqref{eq:p_Burgers_e}\,\text{--}\,\eqref{eq:p_Burgers_ic}\,\text{--}\,\eqref{eq:p_Burgers_Kc}.
\end{equation}

We learn from the mono-dimensional case that  to obtain the uniqueness of  weak solutions there is a need to specify the shock wave by \emph{Rankine-Hugoniot condition} and exclude non-physical shocks by, for instance, \emph{Lax condition}. Namely, let $\xi:[0,T)\rightarrow \mathbb{R}_+$ be a smooth curve describing the discontinuity of scalar weak solution $u$, and by $\xi^{\pm}(t)$ denote left and right limit when $x$ goes to $\xi(t)$. Then 
\begin{equation}\label{eq:RH&L}
    \frac{d}{dt}\xi(t)=\frac{u(\xi^-(t),t)+u(\xi^+(t),t)}{2}, \qquad \text{and}\qquad u(\xi^-(t),t)>u(\xi^+(t),t).
\end{equation}

\begin{df}\label{def:def_ses}
We say that function $u_j: [0,l_j]\times [0,T)\rightarrow \RR$ is \emph{an entropy solution of scalar Burgers' equation on edge $e_j$}, $i=1,\ldots,m$ if it is a weak solution of to scalar Burgers' equation on edge $e_j$ which satisfies both Rankine-Hugoniot and Lax conditions at each discontinuity. \newline
Furthermore, $u=(u_j)_{j\in J}$ is \emph{edge-entropy solution} if it is an entropy solution at each edge.  
\end{df}

Let us remind also that in the mono-dimensional case \emph{Oleinik's one-sided inequality}
\begin{equation}\label{eq:O_inequal}
    u(x_2,t)-u(x_1,t)\leq \frac{x_2-x_1}{t},\qquad \text{for}\,\,x_1\leq x_2,\quad t>0.
\end{equation}
implies that $u$ is an entropy solution.

We concentrate on vertices now. Note first that Kirchhoff condition in vertex $v_i$ being resp. a source or a sink assures unique representation of solution $u_j(v_i,t)=0$, for $e_j\in D^{out}_i$ and $e_j\in D^{in}_i$ resp., since there is no flow through these vertices. In the case of other vertices we may obtain the ambiguity. Consequently, imposing only conditions \eqref{eq:RH&L} on the non-negative weak solution to \eqref{eq:p_net_Burgers} still does not guarantee the uniqueness. Let us stop at this statement for a moment. In order to define the fraction of mass that flows through the vertex $v_i$ at some fixed time $t$, let us transform a classical notion of Riemann solver into the transmission in the vertex counterpart. Denote by $u_j(v_i,t^\mp)$ the value of solution (in a head or a tail of an edge, respectively for $\phi_{ij}^-\neq 0$ and $\phi_{ij}^+\neq 0$), before the flow through the vertex for $t^-$ and after the flow for $t^+$.

\begin{df}
Let 
$\mathcal{G}=((V,E,\mathcal{L},\phi), d)$ be a metric graph and fix $v_i\in V$. 
We say that a mapping 
\bd
\it{TS}_i: [0,\infty]^{\text{deg}(v_i)} \rightarrow [0,\infty]^{\text{deg}(v_i)},\qquad
 u(x,t^-)|_{J_i} \mapsto u(x,t^+)|_{J_i}, 
\ed
where $J_i$ is defined in \eqref{eq:G_i}, is a \emph{transmission solver in vertex $v_i\in V$}, if it satisfies conditions \eqref{eq:p_Burgers_Kc} for almost all $t\in[0,T)$. 
\end{df}

The first peculiarity implied by assuming only the Kirchhoff conditions in vertices is the lack of condition that joins values of solution before and after the flow through the vertex, namely at $t^-$ and $t^+$. 

\begin{exam}\label{exam:uniq_path}
Let $P_2$ be a path graph, see Definition \ref{def:p+h}, and consider a Riemann problem on metric path graph $\mathcal{P}_2$, presented in Figure \ref{fig:circle}, of the form
\be
\begin{array}{rcll}
\displaystyle \sum_{j=1}^2\int_0^T  \int_{0}^1 \left( u_j \partial_t \phi_j + \frac{u_j^2 \partial_x\phi_j}{2}\right) dxdt&=&
   \displaystyle  \sum_{j=1}^2\int_{0}^{1} \mathring{u}_j(x)\phi_j(x,0)dx,&\\
\mathring{u}_1(x)=0,\phantom{ii}\qquad \mathring{u}_2(x)&=&1,&x\in [0,1],\\
u_1(0,t)=u_2(1,t)=0,\qquad u_1^2(1,t)&=&u_2^2(0,t), &t\geq 0.\end{array}
\ee
The transmission solver $TS_2$ does not have to be unique at vertex $v_2=e_1(1)=e_2(0)$, for some neighbourhood of $t=0$. Note that for any parameter $a\in[0,\infty)$, $u$ defined below is a non-negative, edge-entropy solution for some $t\in[0,\epsilon)$. 
\begin{enumerate}
\item Let $a\in [0,1)$, then
\be \nonumber
u_1(x,t)&=&\left\{\begin{array}{ll}
0&\text{for }\,\, x\neq 1,\\
a&\text{for }\,\, x= 1,\end{array}\right. \\ \nonumber
u_2(x,t)&=&\left\{\begin{array}{ll}
a&\text{for }\,\, \frac{x}{t}\leq a,\\
\frac{x}{t}&\text{for }\,\, a< \frac{x}{t}\leq 1,\\
1&\text{for }\,\, \frac{x}{t}>1.\end{array}\right.
\ee
\item Let $a\in [1,\infty)$, then 
\be \nonumber
u_1(x,t)&=&\left\{\begin{array}{ll}
0&\text{for }\,\, x\neq 1,\\
a&\text{for }\,\, x= 1,\end{array}\right. \\ \nonumber
u_2(x,t)&=&\left\{\begin{array}{ll}
a&\text{for }\,\, \frac{x}{t}<\frac{a+1}{2},\\
1&\text{for }\,\, \frac{x}{t}>\frac{a+1}{2}.\end{array}\right.
\ee
\end{enumerate}
Obviously, each coordinate of $u$ is a piece-wise continuous solution to mono-dimensional Burgers' equation and at each jump satisfies Rankin-Hugoniot and Lax conditions. Consequently, by \cite[Thm.~4.2]{Bres2005} $u_j$ is an entropy solution of scalar Burgers' equation on edges $e_j$, $j=1,2$, so the edge-entropy solution to network Burgers. Finally, we derive a family of transmission solvers in $v_2$ at $t=0$, that depends on parameter $a$.
\begin{equation}
    TS_2(0,1)=(a,a), \qquad a\in[0,\infty).
\end{equation}
\end{exam}

Considerations on a path graph allow us to build the intuition related with the behaviour in vertices, as the solution can be easily related with the scalar case. Let us refer to the solutions presented in Example \ref{exam:uniq_path} with a standard solution of initial-boundary value problem on the interval $[0,2]$. Namely, with a problem of a form
\begin{equation*}
\begin{array}{rcll}
\displaystyle  \int_0^T  \int_{0}^2 \left( u \partial_t \phi + \frac{u_j^2 \partial_x\phi}{2}\right) dxdt&=&
\displaystyle  \int_{0}^{2} \mathring{u}(x)\phi_j(x,0)dx,&\\[.1cm]
\mathring{u}(x)&=&\begin{cases}0&x\in [0,1],\\1&x\in [1,2],\end{cases}&\\
u(0,t)&=&u(2,t)=0&t\geq 0.\end{array}
\end{equation*}
The comparison clearly indicates that  to obtain an entropy solution in a mono-dimensional case we need  to take $a=0$, since otherwise we introduce a non-physical shock into the model. The choice of $a\in(0,1]$ gives  a weak solution that can be justified, while $a>1$ seems to make no sense.  To choose a physically reasonable solution in the network case, we assume the continuity at some edges adjacent to the vertex $v_i$, a.e. in time. 
Namely, continuity at the edges from $D_i^{in}$ if the flow agrees with the direction of a vertex. In the case of non-negative solution, this condition simplifies to
\begin{enumerate}
\item[\textit{(LC)}] \hspace{1cm} $u_j(1,t^-)=u_j(1,t^+),\qquad \text{for}\,\,e_j\in D_i^{in}$ and a.e. $t\in(0,T)$.
\end{enumerate}
Condition \textit{(LC)} transfers the problem of finding a value of solution at $t^+$ only into edges from $D_i^{out}$. It is worth mentioning that it is well defined only for vertices different than sinks. For the path graph, see Example \ref{exam:uniq_path}, it is sufficient to obtain the uniqueness; but not in the general case $\text{deg}_{-}(v_i)>1$. The next condition relates the value of solution after the flow through the vertex with the change of the energy, which is a natural assumption in the context of fluid-type equations.

Let us remind that in the case of scalar Burgers' equation the change of energy of piece wise continuous solution with one jump, defined on the interval $[A,B]$ reads
\begin{eqnarray}\label{eq:E'}
\frac{d}{dt}E(t)=\frac{u^3(A,t)}{3}-\frac{u^3(B,t)}{3}-\frac{(u(\xi^-(t),t)-u(\xi^+(t),t))^3}{12},
\end{eqnarray}
where $u(s^{\pm}(t),t)$ is the right and left limit at discontinuity curve $s$. We easily note that for each shock wave that satisfies the Lax condition, energy decreases proportionally to the magnitude of a jump, while for non-physical shocks we observe the increase of the energy. In the following consideration we take into account only edge-entropy solutions which implies that instead of non-physical shock waves we chose the rarefaction wave both in the interior of an edge and its head/tail. Since it is a continuous solution, it does not change the energy point-wise and the last entry in \eqref{eq:E'} vanishes. Now fix the vertex $v_i$ and consider the Riemann problem, at $x=1$ for incoming edges and $x=0$ for outgoing ones, that arises due to the flow through the vertex. We define the change of the energy at $v_i$ by $\mathcal{E}_i:[0,\infty)^{\text{deg}(v_i)}\rightarrow \mathbb{R}$
	\begin{equation} \label{eq:Energy}
	\begin{array}{lcl}
	\mathcal{E}_i(u(v_i,t))&=& \displaystyle  \sum_{j:\,e_j\in D_i^{in}} \mathcal{E}_{ij}^+(u(1,t))+\sum_{j:\,e_j\in D_i^{out}} \mathcal{E}_{ij}^-(u(0,t)), \\ [.5cm]
	\mathcal{E}_{ij}^{\pm}(u(v_i,t))&=&  \displaystyle  \frac{u_j^3(v_i,t^{\mp})-u_j^3(v_i,t^{\pm})}{3} -\frac{\left(u_j(v_i,t^{\mp})-u_j(v_i,t^{\pm})\right)^3}{12}\theta\left(u_j(v_i,t^{\mp})- u_j(v_i,t^{\pm})\right),
\end{array}
	\end{equation}
where $\mathcal{E}_{ij}^{\pm}:[0,\infty)^{\text{deg}(v_i)}\rightarrow \mathbb{R}$ is the change of energy at the edge $e_j$ and $\theta$ is a Heaviside step function. The following transmission conditions are related to extremes of $\mathcal{E}_i$.  

\begin{enumerate}
\item[\textit{($\mathcal{E}_i^m$)}] transmission conditions \eqref{eq:p_Burgers_bc} in $v_i$ minimize function $\mathcal{E}_i$,
\item[\textit{($\mathcal{E}_i^M$)}] transmission conditions \eqref{eq:p_Burgers_bc} in $v_i$ maximize function $\mathcal{E}_i$. 
\end{enumerate}

At the beginning let us remark that without condition \textit{(LC)} the problem of minimization of $\mathcal{E}_i$ with respect to $u(v_i,t^+)$ does not have to be well-posed. Let us return to the Example \ref{exam:uniq_path}. For $v_2$, at $t=0$, we have
\begin{equation*}
 \text{min}_{a\in [0,\infty)}\,\, \mathcal{E}_2(0,1,a,a)=-\infty,
\end{equation*}
since $\mathcal{E}_2$ reads
\bd
\mathcal{E}_2(0,1,a,a)=\left\{\begin{array}{ll}
-\frac{1}{3},&\text{for }a\in[0,1)\\[0.2cm]
-\frac{(a-1)^3}{12}-\frac{1}{3}&\text{for }a\in [1,\infty).\end{array}\right.
\ed
On the contrary maximizing $\mathcal{E}_i$ we obtain $a=1$ which is again not the solution we head to. In order to build further intuition we consider a problem defined on the metric honeycomb tree. 

\begin{exam}\label{exam:uniq_honey}
Let us consider metric honeycomb tree $\mathcal{H}_3$ being $v$-subgraph of honeycomb lattice for $v$ being a vertex of the first kind, see Figure \ref{fig:T1&2}(iii). Define on $\mathcal{H}_3$ a network Burgers' equation 
\eqref{eq:p_net_Burgers} with initial condition $\mathring{u}(x):=(a,1,1)^T$, $a\in[0,1]$. 
The edge-entropy solution which satisfies condition \textit{(LC)} depends on one parameter $b\in [0,a]$, for $t\in[0,\epsilon)$, and reads
\be \nonumber
u_1(x,t)&=&\,\,a,\\ \nonumber
u_2(x,t)&=&\left\{\begin{array}{ll}
b&\text{for }\,\, \frac{x}{t}\leq b,\\
\frac{x}{t}&\text{for }\,\, b< \frac{x}{t}\leq 1,\\
1&\text{for }\,\, \frac{x}{t}>1,\end{array}\right. \\ \nonumber
u_3(x,t)&=&\left\{\begin{array}{ll}
\sqrt{a^2-b^2}&\text{for }\,\, \frac{x}{t}\leq \sqrt{a^2-b^2},\\
\frac{x}{t}&\text{for }\,\, \sqrt{a^2-b^2}< \frac{x}{t}\leq 1,\\
1&\text{for }\,\, \frac{x}{t}>1.\end{array}\right.
\ee
Now we build two transmission solvers which satisfy either \textit{($\mathcal{E}_i^m$)} or \textit{($\mathcal{E}_i^M$)}, and denote them respectively by $TS_2^m$ and $TS_2^M$. Function $\mathcal{E}_2$ is, for $t=0$, formulated by
\begin{equation*}
    \mathcal{E}_2\left(a,1,1,a,b,\sqrt{a^2-b^2}\right)=\frac{b^3+(a^2-b^2)^{\frac{3}{2}}-2}{3}
\end{equation*}
Calculating critical points of $\mathcal{E}_2$ and values at the boundary we arrive at three possible cases, namely $b=0$, $b=\frac{\sqrt{2}}{2}a$ and $b=a$. We note that 
\bd
\mathcal{E}_2(a,1,1,a,a,0)=\mathcal{E}_2(a,1,1,a,0,a)=\frac{a^3-2}{3} \quad \text{and} \quad \mathcal{E}_2\left(a,1,1,a,\frac{\sqrt{2}}{2}a,\frac{\sqrt{2}}{2}a\right)=\frac{\sqrt{2}a^3-4}{6},
\ed
hence $TS_2^m(a,1,1)=\left(a,\frac{\sqrt{2}}{2}a,\frac{\sqrt{2}}{2}a\right)$ and $TS_2^M(a,1,1)\in\left\{(a,0,a),(a,a,0)\right\}$.
\end{exam}

Example \ref{exam:uniq_honey} is very specific since the value of solution before the flow through the vertex is equal at $e_2$ and $e_3$, see Figure \ref{fig:T1&2} for the notation. Consequently, for $t=0$ edges $e_2$ and $e_3$ can be considered as locally symmetric with respect to the flow. In order to exclude such case in further considerations we introduce some technical condition called \emph{decreasing flow with respect to edge enumeration}
\begin{enumerate}
\item[\textit{(DF)}] \hspace{1cm}
$(TS_i^M)_j\geq (TS_i^M)_k$ for any $j<k$, $j,k\in D_i^{out}$. 
\end{enumerate}
It allows specifying the solution in which the highest flow is related to the edge with the lowest number. Since all tree graphs $G$ having the same triplet  $(V,E,\mathcal{L})$ but different mappings $\Phi$ that all satisfy an increasing order of edges are homomorphic, then any locally symmetric solution can be chosen depending on the choice of representative. In particular, using the notation introduced in Example \ref{exam:uniq_honey}, assuming that $TS_2$ satisfies \textit{(DF)} we have that $TS_2^M(a,1,1)=(a,a,0)$.

What happens if edges $e_2$ and $e_3$ are not locally symmetric with respect to the flow? We expect that it leads to different mass distribution when going through the vertex, depending on the value of $\mathring{u}_{2}$ and $\mathring{u}_{3}$. In such a case, coefficients of matrix $\mathcal{B}^{01}(u)$ in equation \eqref{eq:p_Burgers_bc} depend strictly on solution $u$. On the other hand, it is worth to underline that the considered transmission solver works point-wise in time and seems justified to add a consistency condition that allows it to stabilize, on a certain time interval. Namely, we expect that
\begin{equation}\label{eq:RS_consist}
    TS_i\left(TS_i\left(u(v_i,t^-)\right)\right)=u(v_i,t^+).
\end{equation}
Condition \eqref{eq:RS_consist} was also introduced in \cite[Def.~5]{GarPic2009} as one of common assumptions imposed on different transmission solvers considered in the literature. In line with this reasoning, let us define \emph{minimal} and \emph{ maximal transmission solver in vertex} as follows.  

\begin{df}\label{def:mM_RS}
Let $TS_i^m$ (resp. $TS_i^M$) be the transmission solver that, for some fixed $t\in[0,T)$, satisfy conditions \textit{(LC)}--\textit{($\mathcal{E}^m_i$)} (resp. \textit{(LC)}--\textit{($\mathcal{E}^M_i$)}--\textit{(DF)}) in $v_i$. We say that $(TS_i^{m})^{\star}$  (resp. $(TS_i^{M})^{\star}$) is a \emph{minimal (resp. maximal) transmission solver in vertex $v_i$} if it satisfies
\begin{equation}\label{eq:RS_lim}
    (TS_i^{z})^{\star}u(v_i,t^-)=\lim_{n\rightarrow \infty} (TS_i^z)^{(n)}u(v_i,t^-),\qquad \text{for any}\,\, u(v_i,t^-)\in [0,\infty)^{\text{deg}(v_i)},\, z=m,M,
\end{equation}
where, by $(TS_i^z)^{(n)}$, we understand the $n$-th composition of the mapping $TS_i^z$.
\end{df}

We need to justify now that Definition \ref{def:mM_RS} is well-posed, hence that the limit in \eqref{eq:RS_lim} exists. If it does not depend on $u$ 
then problem \eqref{eq:p_Burgers} transforms into
\begin{subequations}\label{eq:p_Burgers2}
\begin{align}\label{eq:p_Burgers_e2}
   \sum_{j\in J}\int_0^T  \int_{0}^{l_j} \left( u_j \partial_t \phi_j + \frac{u_j^2 \partial_x\phi_j}{2}\right) dxdt&=
    \sum_{j\in J}\int_{0}^{l_j} \mathring{u}_j(x)\phi_j(x,0)dx,\\[.2cm] \label{eq:p_Burgers_ic2}
u_j(x,0)&=\mathring{u}_j(x)>0, \qquad x\in[0,l_j],\,j\in J,\\[.1cm] \label{eq:p_Burgers_bc2} 
 u_j(0,t)&=\sum_{\left\{s\in J:\,e_s\in D_i^{in}\right\}}\,b^{01}_{js}(u) u_s(1,t),\qquad \text{for}\,\,\phi_{ij}^-\neq 0,
\end{align}
\end{subequations}
where coefficients of $\mathcal{B}^{01}$ in \eqref{eq:p_Burgers_bc2} are given by
\begin{equation}\label{eq:p_Burgers2_coef}
    b_{js}^{01}(u)=\frac{(TS_i^{z})^{\star}_j(u)}{\sum_{\left\{k\in J:\,e_k\in D_i^{in}\right\}}(TS_i^{z})^{\star}_k(u)}, \qquad \text{for}\quad z=m,M.
\end{equation}


 

\begin{thm}\label{thm:RS_unique}
Consider non-negative weak solution $u$ of Burgers' equation \eqref{eq:p_net_Burgers} on the metric tree $\mathcal{G}$ and fix $t\in[0,T)$. The following statements hold.
\begin{enumerate}
\item[(i)] At each vertex $v_i\in V$, there exists a unique transmission solver $(TS_i^m)^{\star}$ of the form
\begin{equation}\label{eq:RS_min}
    (TS_i^m)^{\star}u(v_i,t^-)=\left\{\begin{array}{ll}
     u_j(1,t^-)&\text{for}\,\, e_j\in D_i^{in},\\
    \frac{1}{\sqrt{\text{deg}(v_i)}}\sqrt{\sum_{\left\{s\in J:\,\,e_s\in D_i^{in}\right\}}u_s^2(1,t^-)}\qquad&\text{for}\,\, e_j\in D_i^{out}.
    \end{array}\right.
\end{equation}
\item[(ii)] At each vertex $v_i\in V$, there exists a unique transmission solver $(TS_i^M)^{\star}$ of the form
\begin{equation}\label{eq:RS_max}
    (TS_i^M)^{\star}u(v_i,t^-)=\left\{\begin{array}{ll}
     u_j(1,t^-)&\text{for}\,\, e_j\in D_i^{in},\\[.2cm]
    \sqrt{\sum_{\left\{s\in J:\,\,e_s\in D_i^{in}\right\}}u_s^2(1,t^-)}\qquad&\text{for}\,\, e_j=e_k,\\ [.2cm]
    0&\text{for}\,\, e_j\in D_i^{out}\setminus\left\{e_k\right\},
    \end{array}\right.
\end{equation}
where $k\in J$ satisfies condition 
\begin{equation}
    k:=\max\left\{j\in J:\,\, e_j\in D_i^{out}\right\}.
\end{equation}
\end{enumerate}
\end{thm}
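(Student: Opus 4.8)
The plan is to reduce the vertex condition to a scalar-constrained optimization in $\mathbb{R}^{\text{deg}_-(v_i)}$ and then to pass to the iterated limit \eqref{eq:RS_lim}. Fix $v_i$ and $t$; write $w_j:=u_j(0,t^+)$ for the sought outgoing values and $v_j:=u_j(0,t^-)$ for the data already present on the outgoing edges. By \textit{(LC)} the incoming values $u_s(1,t^-)$, $e_s\in D_i^{in}$, are frozen, so $\mathcal{E}_{ij}^{+}=0$ and \eqref{eq:Energy} reduces to a function of the outgoing data alone,
\[
\mathcal{E}_i=\sum_{\{j:\,e_j\in D_i^{out}\}}\left(\frac{w_j^3-v_j^3}{3}-\frac{(w_j-v_j)^3}{12}\,\theta(w_j-v_j)\right),
\]
while the Kirchhoff law \eqref{eq:p_Burgers_Kc} becomes the single quadratic constraint $\sum_{\{j:\,e_j\in D_i^{out}\}}w_j^2=S$ with $S:=\sum_{\{s:\,e_s\in D_i^{in}\}}u_s^2(1,t^-)$ and $w_j\ge0$. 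A single application of $TS_i^m$ (resp. $TS_i^M$) is, by \textit{($\mathcal{E}_i^m$)} (resp. \textit{($\mathcal{E}_i^M$)}), the minimizer (resp. maximizer) of this $\mathcal{E}_i$ on the nonnegative part of the Kirchhoff sphere.

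First I would analyze a single step with the shock term switched off, i.e. optimize $\sum_j w_j^3$ over $\sum_j w_j^2=S$, $w_j\ge0$. A Lagrange multiplier gives $3w_j^2=2\lambda w_j$, hence $w_j\in\{0,\tfrac{2}{3}\lambda\}$ at every critical point, so each critical configuration has some coordinates equal to a common value and the rest zero; if $k$ coordinates are nonzero then $\sum_j w_j^3=S^{3/2}/\sqrt{k}$, which by the power-mean inequality is strictly decreasing in $k$. Therefore the minimizer is the barycenter, all $\text{deg}_-(v_i)$ outgoing coordinates equal to $\sqrt{S}/\sqrt{\text{deg}_-(v_i)}$ as in \eqref{eq:RS_min}, and the maximizer concentrates the whole flux on one edge with value $\sqrt{S}$ as in \eqref{eq:RS_max}. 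The maximizer is unique only up to the choice of the active edge, and this is exactly the degeneracy removed by \textit{(DF)}, which singles out the edge $e_k$ and yields \eqref{eq:RS_max}.

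The core of the statement is to upgrade these single-step conclusions to the iterated solver $(TS_i^z)^{\star}$ of \eqref{eq:RS_lim}, and this is where the shock term cannot be ignored, since a single minimizer/maximizer generally depends on the old data $v_j$ through the Heaviside penalty. My plan is to note that any fixed point $\mathbf{w}^{\star}$ of $TS_i^z$ satisfies $w_j^{\star}=v_j^{\star}$, so the transmitted Riemann data are continuous, the term $\theta(w_j-v_j)(w_j-v_j)^3$ vanishes identically, and the fixed-point problem collapses to the pure cubic optimization solved above, whose minimizer \eqref{eq:RS_min} and (under \textit{(DF)}) maximizer \eqref{eq:RS_max} are unique. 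It then remains to prove that the orbit $(TS_i^z)^{(n)}u(v_i,t^-)$ converges, which I expect to be the main obstacle. After the first application the iterates lie on the compact nonnegative Kirchhoff sphere, so the step map is a continuous self-map of a compact set; I would obtain convergence by exhibiting a monotone quantity along the orbit --- the sum of cubes for $z=m$ and the flux on the \textit{(DF)}-selected edge for $z=M$ --- and checking that the cubic shock correction, being of higher order in $w_j-v_j$ near the diagonal, cannot reverse the quadratic gain obtained by redistributing mass on the sphere; this forces the orbit into the basin of the unique extremal configuration identified above.

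Finally, uniqueness of $(TS_i^z)^{\star}$ is immediate from the previous step: the limit is a fixed point, hence solves the shock-free extremal problem whose solution we proved unique (with \textit{(DF)} restoring uniqueness for the maximal solver), so it must coincide with \eqref{eq:RS_min}, resp. \eqref{eq:RS_max}. The consistency condition \eqref{eq:RS_consist} then holds automatically because the limit is invariant under $TS_i^z$, and substitution into \eqref{eq:p_Burgers2_coef} produces well-defined coefficients $b^{01}_{js}$, closing the argument.
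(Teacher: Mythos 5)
Your proposal is essentially correct but follows a genuinely different route from the paper at both key steps. For the single-step extremal structure the paper does not use Lagrange multipliers on the shock-free cubic at all: it shows that the energy, rewritten in the squared variables $f_j^+=u_j^2(0,t^+)$ on the simplex $\sum_j f_j^+=1$, is \emph{strictly quasiconvex} (by checking convexity of $\lambda\mapsto\bar{\mathcal{E}}(\lambda f^++(1-\lambda)g^+)$), so the minimum is unique and the maximum sits on the boundary by quasiconvex programming; your power-mean comparison of the critical values $S^{3/2}/\sqrt{k}$ reaches the same candidates more explicitly but, as you note, only for the shock-free functional. For the iterated solver the paper identifies the limit by a direct perturbation: it transfers mass $h$ from the largest outgoing coordinate to a smaller one and computes $\frac{d}{dh}\tilde{\mathcal{E}}(h)|_{h=0}=\bar u^-(\bar u^--u^-)<0$, concluding that the energy keeps decreasing until the coordinates equalize (and, for $h<0$, that the maximum is pushed to the boundary, with \textit{(DF)} selecting the edge). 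Your fixed-point characterization is an attractive alternative: at a fixed point $w^\star=v^\star$ the Heaviside penalty and its first derivative vanish, and for the minimal solver the inequality $\mathcal{E}_i(w;v^\star)\le\frac13\bigl(\sum_j w_j^3-\sum_j (v_j^\star)^3\bigr)$ together with $\mathcal{E}_i(v^\star;v^\star)=0$ cleanly forces $v^\star$ to be the global minimizer of the cube sum. Two caveats: that inequality points the wrong way for the \emph{maximal} solver, so your claim that the fixed point of $TS_i^M$ must be the global maximizer of the shock-free problem (rather than merely a critical configuration) needs a separate argument; and the convergence of the orbit $(TS_i^z)^{(n)}$, which you correctly identify as the main obstacle, is only sketched via a proposed monotone quantity --- though the paper's own treatment of this step is likewise a local monotonicity argument rather than a complete convergence proof, so you are not below its standard here.
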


\begin{proof}
Let $v_i\in V$ be an arbitrary vertex. 
Without loose of generality we assume that 
\begin{equation}\label{eq:tech_as1}
    \sum_{\left\{j\in J:\,\,e_j\in D_i^{in}\right\}}\,\,u_j^2(1,t^{-})=1,
\end{equation}
and introduce a notation $f^{\mp}=(f_j^\mp)_{j=1}^{\text{deg}(v_i)}:=(u_j^2(v_i,t^{\mp}))_{j=1}^{\text{deg}(v_i)}$. By \textit{(LC)}, finding $TS_i^z$, $z=m,M$, is equivalent to the optimization problem
\be\label{eq:min}
\begin{array}{c}
\bar{\mathcal{E}}(f^+)=\sum_{j=1}^{\text{deg}_+(v_i)} h_j\left(\sqrt{f_j^+}\right)\longrightarrow \min/\max, \\[.2cm]
\text{on the set}\,\, A=\left\{f^+\in [0,1]^{\text{deg}_+(v_i)}:\,\, \sum_{j=1}^{\text{deg}_+(v_i)} f_j^+=1\right\},\end{array}
\ee
where 
\be h_j(u)=\begin{cases}\frac{1}{3}\left(u^3-(f_j^-)^{\frac{3}{2}}\right)&\text{for}\,\,u^2<f_j^-,\\
\frac{1}{4}\left(u^3+(f_j^-)^{\frac{1}{2}}u^2-f_j^-u-f_j^{\frac{3}{2}}\right)&\text{for}\,\,u^2\geq f_j^-.
\end{cases}
\ee
Since we optimize a continuous function $\bar{\mathcal{E}}$ on a compact set $A$, the only thing to prove is the uniqueness of the existing minimum/maximum. We show that $\bar{\mathcal{E}}$ is strictly quasiconvex on a convex set $A$ and therefore attains a unique global minimum. Note that function $\lambda \mapsto \bar{\mathcal{E}}(\lambda f^++(1-\lambda)g^+)$, for $f^+,g^+\in A$ and $\lambda\in [0,1]$ is convex since
\begin{eqnarray*}
    &&\frac{d}{d\lambda^2}\bar{\mathcal{E}}(\lambda f^++(1-\lambda)g^+)=\\
    &&\phantom{xxxx}\quad \sum_{j=1}^{\,\,l^-}\frac{(f_j^+-g_j^+)^2}{2(\lambda f_j^++(1-\lambda)g_j^+)}\left(\left.\frac{d}{du^2}h_j(u)\right|_{u=\sqrt{\lambda f^+_j+(1-\lambda)g^+_j}}-\frac{\left.\frac{d}{du}h_j(u)\right|_{u=\sqrt{\lambda f^+_j+(1-\lambda)g^+_j}}}{2\sqrt{\lambda f^+_j+(1-\lambda)g^+_j}}\right)>0.
\end{eqnarray*}
Hence, it attains maximum at the boundary and 
\begin{equation}\label{eq:q-conv}
   \bar{\mathcal{E}}(\lambda f^++(1-\lambda)g^+)\leq \max\left(\bar{\mathcal{E}}( f^+),\bar{\mathcal{E}}(g^+)\right). 
\end{equation}
Since the inequality \eqref{eq:q-conv} is strict for $\lambda\in(0,1)$, $\bar{\mathcal{E}}$ is strictly quasiconvex. 

Using the methods of quasiconvex programming we know that maximum is attained at the boundary of $A$, see \cite[Lem.~3.2]{Flores2015}. Adding condition \textit{(DF)} we have a uniqueness of $TS_i^M$. 

We now derive the formula for $(TS_i^z)^{\star}$, $z=m,M$, starting with minimization condition. The idea is to describe sequences $(u(v_i,t_n^-))_{n\in \mathbb{N}}$ and $(u(v_i,t_n^+))_{n\in \mathbb{N}}$ in such a way that for  each step 
\begin{equation}
    u(v_i,t_{n+1}^-):=u(v_i,t_{n}^+), \qquad \text{and}\quad t_1:=t.
\end{equation} 
All velocities are non-negative, so for the next time step we obtain such regulation for the velocities coming out the chosen vertex. Let us fix arbitrary $n\in \mathbb{N}$ and denote by $u^-$ and $u^+$ the value of the solution in vertex $v_i$ in the time step $t_n$.  
\begin{eqnarray} \label{eq:u1}
    &u^-:=\left((TS_i^m)^{(n-1)}u(v_i,t^-)\right)_k, \qquad u^+:=\left((TS_i^m)^{(n)}u(v_i,t^-)\right)_k; \\
    &\text{where}\,\, k\in \left\{j\in J:\,\,e_j\in \text{max}_{e_j\in D_i^{out}}\,\, (TS_i^m)^{(n-1)}u(v_i,t^-) \right\}.
\end{eqnarray}
Now consider some index $s\in J$ such that 
\begin{eqnarray}\label{eq:u2}
 &\bar{u}^-:=\left((TS_i^m)^{(n-1)}u(v_i,t^-)\right)_s<u^-, \quad\text{and}\quad  \bar u^+:=\left((TS_i^m)^{(n)}u(v_i,t^-)\right)_s>\bar u^-.
\end{eqnarray}
Without loss of generality assume that the flow through the vertex $v_i$ in $t_n$ changes only values at two coordinates of edges adjacent to $v_i$. Since, for almost all $t$, Kirchhoff condition needs to be satisfied we have
\begin{equation}\label{eq:conserv}
    (u^-)^2+(\bar u^-)^2=(u^+)^2+(\bar u^+)^2.
\end{equation}
We show that the choice of transmission conditions described in \eqref{eq:u1}--\eqref{eq:u2} minimizes the function $\mathcal{E}_i$. Consequently, only the value given in \eqref{eq:RS_min} can be the limit $(TS_i^m)^{\star}$. 

Indeed, for $h>0$ and
$$
\bar u^+=\bar u^-+h, \mbox{ \ we have by \eqref{eq:conserv} \ } u^+=\sqrt{(u^-)^2 - 2(\bar u^-) h -h^2}.
$$
The structure of the data implies that 
\begin{eqnarray*}
    \mathcal{E}_i(u(v_i,t_n))&=&\sum_{j:\,e_j\in D_i^{out}\setminus \left\{e_k,e_s\right\}} \mathcal{E}_{ij}^-(u(0,t_n))+\frac{(\bar u^+)^3-(\bar u^-)^3}{3}-\frac{(\bar u^+-\bar u^-)^3}{12} +
    \frac{(u^+)^3-(u^-)^3}{3} \\
    &=&\sum_{j:\,e_j\in D_i^{out}\setminus \left\{e_k,e_s\right\}} \mathcal{E}_{ij}^-(u(0,t_n))+\frac{(\bar u^-+h)^3-(\bar u^-)^3}{3} - \frac{h^3}{12}\\ [0.1cm]
    &+&\frac{ \left((u^-)^2 - 2(\bar u^-) h -h^2\right)^{\frac{3}{2}}-(u^-)^3}{3}=:\tilde{ \mathcal{E}}(h)
\end{eqnarray*}
But then we note that
\begin{equation}\label{eq:E'h}
\frac{d}{dh}\tilde{\mathcal{E}}(h)|_{h=0} = (\bar u^-)(\bar u^- -u^-) <0.
\end{equation}
Hence $\mathcal{E}_i$ decreases locally with a growth of $h>0$. 

Let us turn now to the energy maximization case. Since the above considerations are working for $h$ negative also, the form of the derivative in \eqref{eq:E'h} ensures that the maximum is realised at the boundary of the set $A$. Condition \textit{(DF)} provides a final formula for $(TS_i^M)^{\star}$.

\end{proof}

The assumptions of Theorem \ref{thm:RS_unique} are strictly related to non-negative velocities of flow. In the general case the considerations are more subtle and generate a larger number of possibilities of physical behaviour of a flow. For that reason in Section \ref{sec:gen_trans} we confine ourselves to honeycomb trees. Note that this metric graph provides only three types of transmission conditions, according to the formula \eqref{eq:p_Burgers2_coef}. Two for the vertices $v_i$ of the first kind, such that $D_i=\left(\left\{e_j\right\},\left\{e_k,e_l\right\}\right)$, $j<k$
\begin{enumerate}
    \item[(i)] $u_k(0,t)=u_j(1,t),\,\, u_k(0,t)=0$ 
    \item[(ii)] $u_j(0,t)=u_k(0,t)=\frac{\sqrt{2}}{2}u_j(1,t)$;
\end{enumerate}
and one for the vertices of the second kind such that $D_i=\left(\left\{e_j,e_k\right\},\left\{e_l\right\}\right)$
\begin{enumerate}
    \item[(iii)] $u_l(0,t)=\frac{u_j(1,t)}{\sqrt{u_j^2(1,t)+u_k^2(1,t)}}u_j(1,t)+\frac{u_k(1,t)}{\sqrt{u_j^2(1,t)+u_k^2(1,t)}}u_k(1,t)$.
\end{enumerate}

At the end of this part let us give the formal definition of entropy solution of network Burgers' equation. 

\begin{df}\label{def:}
We say that function $u: [0,1]\times [0,T)\rightarrow \RR^m$ is \emph{a vertex-entropy solution}, if it is a weak solution to network Burgers' equation \eqref{eq:p_Burgers}.

Furthermore, $u=(u_j)_{j\in J}$ is \emph{entropy solution} if it is an edge- and vertex-entropy solution. In particular \emph{minimal- and maximal-entropy solutions} are respectively the edge-entropy solutions to \eqref{eq:p_Burgers2} -- \eqref{eq:p_Burgers2_coef} with $z=m,M$.   
\end{df}

\subsection{Existence of solution}\label{sec:posit_exist}

We are finally ready to prove the existence result in the case of non-negative solutions. 
\begin{thm}\label{thm:p_exist}
Problem \eqref{eq:p_Burgers} for a finite tree $\mathcal{G}$ admits a non-negative entropy solution for any $\mathring{u}\in L^{\infty}([0,1],\mathbb{R}_+^m)$. For almost all $t>0$ function $x\mapsto u(x,t)$ has a locally bounded total variation and can be calculated recursively from the formula
\begin{subequations}\label{eq:explicitB}
\begin{eqnarray}\label{eq:explicitB1}
u_{j}(x,t)&=&\frac{x-y_{j}(x,t)}{t}, \qquad \qquad\text{where $y_j$ minimizes function}\\ \label{eq:explicitB2}
y\mapsto G_{j}(x,t,y)&=&\left( \frac{(x-y)^2}{2t}+\int_{0}^y\mathring{u}_j(s)ds\right)\chi_{[0,x]}(y)\\ \label{eq:explicitB3}
&+&\left(\frac{x(x-y)}{2t}-\int_{0}^{\frac{-y}{x-y}t}\frac{u_j^2(0,s)}{2}ds\right)\chi_{(-\infty,0)}(y),
\end{eqnarray}
\end{subequations}
for any edge $j\in J$.
\end{thm}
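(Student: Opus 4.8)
The plan is to build the solution one edge at a time, using the increasing order of edges available on any finite tree. Because that ordering guarantees that whenever an edge $e_j$ is reached every edge feeding its tail vertex already carries a smaller index, the inflow datum at $x=0$ on $e_j$ is determined by quantities computed earlier; the construction is thus a finite recursion over $j\in J$ rather than a global fixed point. Condition \eqref{eq:n_return} together with the non-negativity of $\mathring{u}$ ensures that the flow agrees with the parametrisation and never returns, so the dynamics stays acyclic and, the network being finite, finite speed of propagation rules out blow-up.

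First I would analyse a single edge. The key analytic step is to show that the Lax--Oleinik-type formula \eqref{eq:explicitB} solves the scalar initial--boundary value problem on $[0,l_j]$ with initial datum $\mathring{u}_j$ and inflow datum $u_j(0,\cdot)$ at $x=0$. Following the Hamilton--Jacobi methodology, I would pass to the potential $U_j(x,t)=\int_0^x u_j(s,t)\,ds$, which satisfies a Hamilton--Jacobi equation whose Hopf--Lax representation is exactly the minimisation in \eqref{eq:explicitB2}--\eqref{eq:explicitB3}. The two indicator terms encode the two families of backward characteristics: for $y\in[0,x]$ the characteristic reaches the initial line, producing the classical term $\tfrac{(x-y)^2}{2t}+\int_0^y\mathring{u}_j(s)\,ds$, while for $y<0$ it reaches the inflow boundary at time $\tau=\tfrac{-y}{x-y}\,t$, producing the boundary contribution $\tfrac{x(x-y)}{2t}-\int_0^{\tau}\tfrac{1}{2}u_j^2(0,s)\,ds$. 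Minimising $G_j$ selects at each $(x,t)$ the admissible characteristic and hence yields an \emph{edge-entropy} solution: the Oleinik one-sided inequality \eqref{eq:O_inequal} follows from the convexity in $y$ of the interior term and forces both the Rankine--Hugoniot and Lax conditions at every jump, while local boundedness of the total variation follows from the standard scalar estimates once the minimiser $y_j(\cdot,t)$ is seen to be monotone in $x$ for a.e. $t$.

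Next I would run the recursion. For a source edge the tail vertex carries no incoming flux, so the Kirchhoff balance \eqref{eq:p_Burgers_Kc} forces $u_j(0,t)=0$; the boundary term \eqref{eq:explicitB3} then drops and \eqref{eq:explicitB} reduces to the pure Cauchy formula. For an interior edge $e_j\in D_i^{out}$, the endpoint traces $u_s(1,\cdot)$ of the already-constructed incoming solutions ($e_s\in D_i^{in}$, all with smaller index) are fed into the minimal or maximal transmission solver of Theorem \ref{thm:RS_unique}, which returns the inflow datum $u_j(0,\cdot)$ via \eqref{eq:p_Burgers2_coef}. By construction this datum satisfies the Kirchhoff condition and, through the fixed-point characterisation \eqref{eq:RS_consist}, the stabilised transmission relation \eqref{eq:p_Burgers_bc}; inserting it into \eqref{eq:explicitB} advances the solution onto $e_j$. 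Summing the edge-wise weak identities and invoking the cancellation of vertex boundary terms established in \eqref{eq:parts}--\eqref{eq:Kirch} then yields the global weak formulation \eqref{eq:p_Burgers_e}, so the assembled $u$ is a non-negative entropy solution with locally bounded variation.

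The main obstacle I anticipate is closing the recursion at the level of regularity. The formula on $e_j$ requires $u_j(0,\cdot)$ to be a genuine measurable, locally bounded function of time, yet this datum is the image under a nonlinear transmission solver of the \emph{traces} $u_s(1,\cdot)$ of the incoming solutions. I must therefore show that these endpoint traces exist for a.e. $t$, that they inherit locally bounded variation from the interior $BV$ bound, and that $(TS_i^z)^{\star}$ preserves this regularity so the hypotheses for the next edge are met. A secondary point is verifying that the limit defining $(TS_i^z)^{\star}$ in \eqref{eq:RS_lim} is compatible, as a map between functions of $t$, with the time-pointwise energy optimisation, i.e. that the stabilised solver may legitimately act on the whole time-trace rather than instant by instant; this is exactly what the consistency condition \eqref{eq:RS_consist} is designed to guarantee, and establishing it in the $BV$ class is the technical heart of the argument.
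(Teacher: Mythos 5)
Your proposal follows essentially the same route as the paper: reduce the network problem to a finite recursion of initial--boundary value problems along an increasing edge order (with $u_j(0,t)=0$ at sources and the transmission solver supplying inflow data elsewhere), represent each edge solution through the potential $w_j(x,t)=\int_0^x u_j(s,t)\,ds$ and a Hopf--Lax/Lax--Oleinik minimisation whose two branches correspond to backward characteristics hitting the initial line ($y\in[0,x]$) or the inflow boundary ($y<0$), and deduce the entropy property from monotonicity of the minimiser together with Oleinik's one-sided inequality. The technical points you flag at the end --- existence and regularity of the endpoint traces fed into the transmission solver, and the verification that the minimisation formula really produces a weak solution (which the paper carries out via Hopf's exponential regularisation $a_{j\epsilon}, u_{j\epsilon}, f_{j\epsilon}$ and a Laplace-method limit $\epsilon\to 0$) --- are precisely where the paper's proof invests its remaining effort.
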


\begin{proof}
In accordance with the proof of existence of a weak solution in the scalar case, see \cite[Thm.1.1]{Lefloch2002}, we show that formula \eqref{eq:explicitB} is valid for piece-wise smooth solutions satisfying Lax shock inequality at discontinuity. To this end we define a solution recursively at each edge.

\smallskip

\emph{Necessity.} Assume first that $u$ is a solution of \eqref{eq:p_Burgers} as stated above. Then for any source $e_j(0)$, $j=1,\ldots,s$, see Subsection \ref{sec:graph}, the right hand side of \eqref{eq:p_Burgers_bc} vanishes and consequently $u_j(0,t)= 0$ for all $t>0$. Note that due to the tree structure and recursive procedure, we can choose such an order of edges that before calculating the solution on $k$-th edge we have values of all $u_j(x,t)$ for $j\in J$ such that $b_{kj}> 0$, see equation \eqref{eq:adj}. Consequently, the system of conservation laws on network transforms into a sequence of initial-boundary-value problems of a form
\begin{subequations}\label{eq:bv_Burgers}
\begin{align}\label{eq:bv_Burgers_e}
   \int_{0}^{l_j} \left( u_j \partial_t \phi_j + \frac{u_j^2 \partial_x \phi_j}{2} \right) dxdt&=
    \int_{0}^{l_j} \mathring{u}_j(x)\phi_j(x,0)dx,\\[.2cm] \label{eq:bv_Burgers_ic}
u_j(x,0)&=\mathring{u}_j(x)>0, \qquad x\in[0,l_j],\,j\in J,\\[.1cm] \label{eq:bv_Burgers_bc} 
 u_j(0,t)&=\sum_{\left\{s\in J:\,e_s\in D_i^{in}\right\}}\,b^{01}_{js}(t) u_s(1,t),
\end{align}
\end{subequations}
where $i\in I$ satisfies $v_i=e_j(0)$.

Let us fix $j\in J$ and define auxiliary function $w_j:[0,l_j]\times [0,\infty)\rightarrow \mathbb{R}_+$; $\mathring{w}_j:[0,l_j]\rightarrow \mathbb{R}_+$ such that 
\begin{equation}\label{eq:w}
w_{j}(x,t)=\int_{0}^x u_{j}(s,t)ds,\qquad \mathring{w}_j(x):=w_j(x,0).
\end{equation}
Note that $u$ is a weak, piece-wise smooth solution to \eqref{eq:p_Burgers} if and only if it satisfies 
\begin{equation*}
    \partial_t u_j+u_j\partial_x u_j=0
\end{equation*}
at each smoothness region in $[0,l_j]\times[0,T)$ and Rankine-Hugoniot condition along the discontinuity, see \cite[Thm.~2.3]{Lefloch2002}. We have 
\begin{eqnarray}\nonumber
   \int_{0}^x \partial_t u_j(s,t)+\partial_s\,\frac{u_j^2(s,t)}{2}ds &=& \partial_t w_j(x,t)+\frac{u_j^2(x,t)-u_j^2(0,t)}{2}\\ \label{eq:w_est}
    &=&\partial_t w_j(x,t)+\frac{(\partial_x w_j(x,t))^2}{2}-\left.\frac{(\partial_x w_j(x,t))^2}{2}\right|_{x=0}=0.
\end{eqnarray} 
By the properties of a square function we have that for any $v\in [0,\infty)$ and $z\in \mathbb{R}$
\bd
vz-\frac{v^2}{2}\leq\frac{z^2}{2}.
\ed
For $z=\partial_x w_j$, by \eqref{eq:w_est},
\begin{eqnarray*} \label{eq:est}
    v \partial_x w_j-\frac{v^2}{2}\leq \frac{(\partial_x w_j)^2}{2}=\left.\frac{(\partial_x w_j)^2}{2}\right|_{x=0}-\partial_t w_j,
\end{eqnarray*}
and consequently,
\begin{equation}\label{eq:charakt}
    \partial_t w_j+v\partial_x w_j \leq \frac{v^2}{2}+\left.\frac{(\partial_x w_j)^2}{2}\right|_{x=0}.
\end{equation}

In order to determine the value of $u_j$ at $(x,t)\in [0,l_j]\times [0,\infty)$ we chose some $v$. The line passing through $(x,t)$ with slope $v$ either intersects the ox axis at $y=x-vt\in [0,l_j]$ or hits the oy axis at $t=-\frac{y}{v}$ for $y<0$. We integrate \eqref{eq:charakt} along the characteristic $y=x-vt$ separately in two mentioned cases. 

If $y\in[0,l_j]$, then integrating over $[0,t]$, analogously to the proof in the scalar case, we have 
\begin{eqnarray} \nonumber
w_j(x,t)&\leq& \frac{v^2t}{2}+\int_0^t \frac{(\partial_x w_j(0,s))^2}{2} ds+\mathring{w}_j(x)\\ \label{eq:est_charakt+}
&=& \frac{(x-y)^2}{2t}+\int_0^t \frac{u_j^2(0,s)}{2}ds+\int_0^x\mathring{u}_j(s)ds.
\end{eqnarray} 
If $y\in(-\infty,0)$, then we integrate over $\left[-\frac{y}{v},t\right]$ and since $v=\frac{x-y}{t}$ and by \eqref{eq:w}, we obtain
\begin{eqnarray} \nonumber
w_j(x,t)&\leq& \frac{v^2}{2}\left(t+\frac{y}{v}\right)+\int_{-\frac{y}{v}}^t \frac{(\partial_x w_j(0,s))^2}{2}ds+w_j\left(0,-\frac{y}{v}\right)\\ \label{eq:est_charakt-}
&\leq & \frac{x(x-y)}{2t}+\int_{-\frac{yt}{x-y}}^t \frac{u_j^2(0,s)}{2}ds.
\end{eqnarray}
Finally, by \eqref{eq:est_charakt+} and \eqref{eq:est_charakt-} 
\be\label{eq:min}
w_j(x,t)\leq \int_0^t \frac{u_j^2(0,s)}{2}ds +G_j(x,t,y),
\ee
where $G_j$ is defined in \eqref{eq:explicitB2} -- \eqref{eq:explicitB3}. Since the left hand side does not depend on $y$ we minimize the right hand side over $y$. Let us choose the slope of the characteristic line $v=u_j(x,t)$. Inserting $v$ to \eqref{eq:charakt} we obtain, by \eqref{eq:w_est}, the equality. The minimum in \eqref{eq:min} is attained for $y_j$ since $u$ satisfies Lax condition. Finally,
\begin{eqnarray}
w_j(x,t)&=&\int_0^t \frac{u_j^2(0,s)}{2}ds+G_j(x,t,y_j), \qquad \text{where}\quad y_j:=\text{arg min}_{y\in \mathbb{R}}\,G_j(x,t,y),
\end{eqnarray}
and since $y_j(x,t)=x-u_j(x,t)t$ we derive a formula \eqref{eq:explicitB1}.

\smallskip 

\emph{Sufficiency.} Assume that $u$ is given by \eqref{eq:explicitB}, and show that it is a weak solution to \eqref{eq:p_Burgers}. Note firstly that $u$ is well defined since for any $j=1,\ldots,m$, there exists a unique minimizer of $G_j$. 

The existence of a minimizer of $G_j$ for $y\in[0,x]$ is obvious since the first entry in \eqref{eq:explicitB2} grows faster than linearly while the second has at most linear growth. The same argument works in the case $y\in(-\infty,0)$ if we transform the problem of minimiaztion of $G_j$ over $y_j$ into minimization of function $H_j:[0,x]\times (0,\infty)\times [0,\infty)\rightarrow \mathbb{R}$,
    \begin{equation}\label{eq:def_H}
        H_j(x,t,\tau_j):=\frac{x^2}{2(t-\tau_j)}-\int_{0}^{\tau}\frac{u_j^2(0,s)}{2}ds,
    \end{equation}
    over $\tau_j$, where
    \begin{eqnarray}\label{eq:def_tau}
    \tau_j(x,t)&:=&\frac{-y_j(x,t)}{x-y_j(x,t)}t, \qquad \text{for some }y_j(x,t)<0.
    \end{eqnarray}

We show now that $x\mapsto y_j(x,t)$ is non-decreasing. Consequently it has locally bounded total variation and it is continuous in all but countably many points. It is sufficient for the uniqueness of the minimizer of $G_j$ and the well-posedness of $u$ for almost all $(x,t)$.  

Let us fix $t>0$; and by an abuse of notation denote by $y_1:=y_j(x_1,t)$, $y_2:=y_j(x_2,t)$ for any $x_1,x_2\in[0,x]$. Denote by $x_0\in[0,x]$ an argument such that $y_j(x_0,t)=0$. By contradiction we assume that $x\mapsto y_j(x,t)$ is decreasing and we consider three cases.

\begin{enumerate}
    \item 
    $0 \leq y_2<y_1$ and $x_0\leq x_1<x_2$
    
    From the definition of $y_1$, $G_j(x_1,t,y_1)\leq G_j(x_1,t,y_2)$. Additionally,
    \begin{equation*}
        \left(\frac{x_2-y_1}{t}\right)^2+\left(\frac{x_1-y_2}{t}\right)^2<\left(\frac{x_1-y_1}{t}\right)^2+\left(\frac{x_2-y_2}{t}\right)^2.
    \end{equation*}
    Finally, using \eqref{eq:explicitB2} we obtain the contradiction with the fact that $y_2$ minimizes $y\mapsto G_j(x_2,t,y)$ 
    \begin{eqnarray*}
    G_j(x_2,t,y_1)\leq G_j(x_1,t,y_2)-G_j(x_1,t,y_1)+G_j(x_2,t,y_1)<G_j(x_2,t,y_2).
    \end{eqnarray*}
    
    \item $y_2<y_1\leq 0$ and $ x_1<x_2<x_0$ 
    
    Using the notation in \eqref{eq:def_H} -- \eqref{eq:def_tau}, introduce $\tau_1:=\tau_j(x_1,t)$ and $\tau_2:=\tau_j(x_2,t)$. Conditions $y_2<y_1$ and $x_1<x_2$ imply that 
    $\tau_1<\tau_2$ and therefore we can repeat the reasoning in point 1. Again $H_j(x_1,t,\tau_1)\leq H_j(x_1,t,\tau_2)$ and
     \begin{equation*}
        \frac{x_1^2}{t-\tau_1}+\frac{x_2^2}{t-\tau_2}<\frac{x_1^2}{t-\tau_2}+\frac{x_2^2}{t-\tau_1}.
    \end{equation*}
    Using \eqref{eq:def_H}, we obtain the contradiction with the fact that $\tau_2$ minimizes $\tau_j \mapsto H_j(x_2,t,\tau)$ 
    \begin{eqnarray*}
    H_j(x_2,t,\tau_1)\leq H_j(x_1,t,\tau_1)-H_j(x_1,t,\tau_2)+H_j(x_2,t,\tau_1)<H_j(x_2,t,\tau_2).
    \end{eqnarray*}
    \item 
    $y_2<0\leq y_1$ and $x_1<x_2$
    
    Note that $x\mapsto y_i$ is non-decreasing on both intervals $[0,x_0]$ and $[x_0,x]$ so consequently $x_0\leq x_1<x_2\leq x_0$, which leads to the contradiction. 
\end{enumerate}


We show that \eqref{eq:explicitB} is a weak solution. Define now functions $a_{j\epsilon}, u_{j\epsilon}, f_{j\epsilon}, v_{j\epsilon}\in L^{\infty}([0,1]\times \mathbb{R}_+)$ such that
\begin{eqnarray*}
a_{j\epsilon}(x,t)&:=&\int_{-\infty}^0 e^{-\frac{1}{\epsilon}G_j(x,t,y)}dy+\int^{\infty}_0e^{-\frac{1}{\epsilon}G_j(x,t,y)}dy,\\ [.1cm]
u_{j\epsilon}(x,t)&:=&\frac{1}{a_{j\epsilon}(x,t)}\left(\int_{-\infty}^0\frac{2x-y}{2t}e^{-\frac{1}{\epsilon}G_j(x,t,y)}dy+\int^{\infty}_0\frac{x-y}{t}e^{-\frac{1}{\epsilon}G_j(x,t,y)}dy\right),\\[.1cm]
f_{j\epsilon}(x,t)&:=&\frac{1}{a_{j\epsilon}(x,t)}\left(\int_{-\infty}^0\frac{x(x-y)}{2t^2}e^{-\frac{1}{\epsilon}G_j(x,t,y)}dy+\int^{\infty}_0\frac{(x-y)^2}{2t^2}e^{-\frac{1}{\epsilon}G_j(x,t,y)}dy\right).
\end{eqnarray*}
Set additionally 
\begin{equation}
    v_{j\epsilon}(x,t)=\log{a_{j\epsilon}(x,t)}.
\end{equation}
Note now that functions $(x,t) \mapsto G_j(x,t,y)$ and $(x,t) \mapsto v_{j \epsilon}(x,t,y)$, are differentiable with respect to $x$ and $t$; and hence $u_{j\epsilon}=-\epsilon\, \partial_t v_{j\epsilon}(x,t)$, $f_{j\epsilon}(x,t,y)=\epsilon\, \partial_x v_{j\epsilon}(x,t,y)$ we have
\begin{equation}\label{eq:weak2}
    \partial_t u_{j\epsilon}+\partial_x f_{j\epsilon}=0.
\end{equation}
We show that 
\be\label{eq:limit1}
\lim_{\epsilon \rightarrow 0^+}u_{j\epsilon}(x,t)=u_j(x,t)\quad \text{and}\quad \lim_{\epsilon \rightarrow 0^+}f_{j\epsilon}(x,t)=f_j(x,t),
\ee
for any $(x,t)$ in which $x\mapsto y_j(x,t)$ is continuous. Denote by $\bar{y}_j(x,t)$ the unique minimizer of $G_j$ at $(x,t)$ and define a mapping 
\bd
y\mapsto \bar{G}_j(x,t,y):=G_j(x,t,y)-G_j(x,t,\bar{y}_j);
\ed
which attains in $\bar{y}_{j}$ its minimum equal to $0$. Since $\bar{G}_j$ is locally Lipschitz continuous (which in particular on the interval $(-\infty,0)$ follows from reformulation \eqref{eq:def_H} -- \eqref{eq:def_tau}) then for any $\delta>0$, the estimate holds for $y\in [\bar{y}_j(x,t)-\delta,\bar{y}_j(x,t)+\delta]$ with Lipschitz constant $C_{j1}(x,t)$. Therefore 
\begin{eqnarray*}
a_{j\epsilon}(x,t)=\int_{\mathbb{R}}e^{-\frac{1}{\epsilon}\bar{G}_j(x,t,y)}dy &\geq& \int_{\bar{y}_j(x,t)-\delta}^{\bar{y}_j(x,t)+\delta}e^{-C_{j1}(x,t)|y-\bar{y}_j|}dy\\
&=&\frac{2}{C_{j1}(x,t)}\left(1-e^{-\frac{C_{j1}(x,t)\delta}{\epsilon}}\right)\epsilon \geq C_{j2}(x,t)\epsilon,
\end{eqnarray*}
for all $\epsilon<\delta$. On the other hand, for $y$ such that $|y-\bar{y}_j|\geq \delta$, $\bar{G}_j$ is bounded away from zero and attains infinity in infinity, hence
\be
e^{-\frac{1}{\epsilon}\bar{G}_j(x,t,y)}\leq e^{-\frac{1}{\epsilon}C_{j3}(x,t,\delta)|y-\bar{y}_j|}.
\ee
Finally we have
\begin{eqnarray} \nonumber
|u_{j\epsilon}-u_j|&=&\frac{1}{a_{j\epsilon(x,t)}t}\left(\int_{\left\{y:\,|y-\bar{y}_j|<\delta\right\}}|y-\bar{y}_j|e^{-\frac{1}{\epsilon}G_j(x,t,y)}dy+\int_{\left\{y:\,|y-\bar{y}_j|\geq \delta\right\}}|y-\bar{y}_j|e^{-\frac{1}{\epsilon}G_j(x,t,y)}dy\right)\\ \nonumber
&\leq& \frac{\delta}{t}+\frac{2}{C_{j2} t \epsilon}\int_{0}^{\infty}ye^{-\frac{C_{j3}}{\epsilon}y}dy=\frac{\delta}{t}+\frac{2}{C_{j2}C_{j3}^2 t}\epsilon.
\end{eqnarray}
Passing to 0 with $\epsilon$ we receive the first limit in \eqref{eq:limit1}. Analogously, we calculate the second and passing with $\epsilon\rightarrow 0$ in \eqref{eq:weak2} we conclude that $u$ is a weak solution to \eqref{eq:p_Burgers}.

\eqref{eq:explicitB} is edge-entropy solution since it satisfies Oleinik's one-sided inequality \eqref{eq:O_inequal}. Indeed, by the fact that $x\mapsto y_j(x,t)$ is non-decreasing and positive, for any $x_1\leq x_2$, $x_1,x_2\in[0,l_j]$ and a.e. $t>0$
\begin{equation*} 
    u_j(x_2,t)-u_j(x_1,t)=\frac{x_2-y_j(x_2,t)}{t}-\frac{x_1-y_j(x_1,t)}{t}\leq \frac{x_2-x_1}{t}.
\end{equation*}
For the case with  negative $y's$ we get
\begin{equation*}
    u_j(x_2,t)-u_j(x_1,t)=\frac{x_2}{t-\tau(x_2,t)}-\frac{x_1}{t-\tau(x_1,t)}=\frac{x_2-x_1}{t-\tau(x_2,t)} + \left(\frac{x_1}{t-\tau(x_2,t)} -\frac{x_1}{t-\tau(x_1,t)}\right) \leq \frac{x_2-x_1}{t-\tau(x_2,t)}.
\end{equation*}
Since transmission conditions in \eqref{eq:p_Burgers_bc} are defined uniquely we arrive at an entropy solution.

Finally, on every edge the weak solution in piece-wise $C^1$ function so taking the limit $x_2-x_1\rightarrow 0$
\begin{equation}\label{eq:bound}
    \partial_x u_j(x_1,t)=\lim_{x_1-x_2\rightarrow 0}\frac{u_j(x_2,t)-u_j(x_1,t)}{x_2-x_1}\leq \max\{\frac{1}{t},\frac{1}{t-\tau(x_2,t)}\},
\end{equation}
we arrive with the estimate on $u_x$ at a.e. $(x_1,t)$.
\end{proof} 

Note that equations \eqref{eq:explicitB} are the counterparts of \emph{Lax-Oleinik formulas}, see \cite[Eq.~IV.1.3]{LeFloch1988}, for Burgers' equation on a tree. For the graph that satisfies condition 
\begin{equation}\label{eq:drzewo1}
    \text{deg}_+(v_i)\leq 1,\qquad  \text{for any}\,\, i\in I,
\end{equation}  
it is possible to relate this solution with the standard formulation on the straight line. The core property in this representation is to derive coefficients $\mathcal{B}^{01}(u)$ that are independent of a flow when we move back-word along the characteristic line. 

\begin{figure}[h]
\center
(i)\quad \begin{tikzpicture}

\def\tmp{1.7};

\foreach \x in {0,1}{\path[draw=gray,dashed] (0,\tmp*\x)
node[
regular polygon,
draw=none,
regular polygon sides=6,
draw,
inner sep=.6cm,
] (hexagon) {};}	
					
\path[draw=gray, dashed] (0.985*3/2,\tmp/2)
node[
regular polygon,
draw=none,
regular polygon sides=6,
draw,
inner sep=.6cm,
] (hexagon) {};	
	
\draw[->,thick] (-1/2,\tmp/2) -- node[below=2pt] {$e_1$} (1/2,\tmp/2);
\draw[->,thick] (1/2,\tmp/2) -- node[left=2pt] {$e_2$} (1,\tmp);
\draw[->,thick] (1/2,\tmp/2) -- node[right=1pt] {$e_3$} (1,0);
\draw (1/2,\tmp/2) node[right=2.5pt]{$v_0$};			
\end{tikzpicture} \qquad \qquad (ii)\quad \begin{tikzpicture}
\def\tmp{1.7};
\foreach \x in {0,1}{\path[draw=gray,dashed] (0,\tmp*\x)
node[
regular polygon,
draw=none,
regular polygon sides=6,
draw,
inner sep=.6cm,
] (hexagon) {};}	
\path[draw=gray, dashed] (-0.985*3/2,\tmp/2) 
node[
regular polygon,
draw=none,
regular polygon sides=6,
draw,
inner sep=.6cm,
] (hexagon) {};
\draw[->,thick] (-1/2,\tmp/2) -- node[below=2pt] {$e_3$} (1/2,\tmp/2);
\draw[->,thick] (-1,\tmp) -- node[left=2pt] {$e_1$} (-1/2,\tmp/2);
\draw[->,thick] (-1,0) -- node[left=1pt] {$e_2$} (-1/2,\tmp/2);
\draw (-1/2,\tmp/2) node[above right=2.5pt]{$v_0$};			
\end{tikzpicture}

	\caption{Two $v_0$-subgraphs of honeycomb tree, for $v_0$ being respectively a vertex of (i) a first kind (ii) a second kind. Illustration for Example \ref{exam:path} and considerations in Subsection \ref{sec:gen_trans}. }
		\label{fig:HT1&2}
\end{figure}
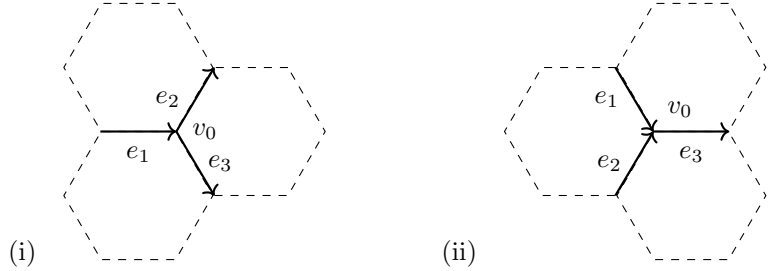

\begin{exam}\label{exam:path}
In order to explain this situation consider again two kinds of nodes in honeycomb tree, see Figure \ref{fig:HT1&2}(i)(ii) and the transmission conditions in vertex $v_0$ characterised by minimal transmission solver $(TS^m)^{\star}$. 
\begin{enumerate}
    \item[(i)] $v_0$ is of a first kind \newline 
    The idea now is to define the solution on the graph at point $(x,t)$ on the edge $e_3$ along the path $e_0e_1e_3$ where by $e_0$ we understand a half line $e_0=(-\infty,0)$ with initial condition $\mathring{u}_0=0$ and transmission conditions between edges $e_0$ and $e_1$ that conserve both the mass and the flux, namely
    \begin{equation}
        u_0(1,t)=u_1(0,t),\qquad \text{for almost all}\,\,t>0.
    \end{equation} 
    We change the reasoning in the proof of Theorem \ref{thm:p_exist} in the following way. Considering the characteristic line passing through $(x_0,t_0)$ with slope $v_0$ (assume $y_0=x_0-v_0t_0<0$) we allow it to go through the vertex and continue until it hits the initial line. Using the formula for transmission conditions \eqref{eq:p_Burgers_bc2} -- \eqref{eq:p_Burgers2_coef}, we conclude that characteristic line passes through the point $\left(1,-\frac{y_0}{v_0}\right)$ on the edge $e_1$ with a slope $\sqrt{2}v_0$. Then it intersects either $e_1$ or $e_0$ at $(y_1,0)$. Finally the explicit formula for the solution is given by 
    \begin{subequations}
\begin{eqnarray}
u_3(x_0,t_0)&=&\frac{x_0-y_3(x_0,t_0)}{t_0}\\ \nonumber
\text{where $y_3$ minimizes function}\phantom{xxxxx}&&\\
y\mapsto G_3(x_0,t_0,y)&=&\int_{-\infty}^{y_1} \mathring{u}_1(s)ds+\frac{(x_0-y)^2}{2t_0}.\phantom{xxxxxxxxxxx}
\end{eqnarray}
\end{subequations}
    \item[(ii)] $v_0$ is of a second kind \newline 
    The first problem in repeating the reasoning in (i) for $v_0$ is the lack of uniqueness of the path since we can chose either $e_0e_1e_3$ or $e_0e_2e_3$. The more essential problem, however, is the fact that we cannot define neither the slope of characteristic line $v_1$ on $e_1$, nor its counterpart on $e_2$ - $v_2$. Such representation does not result from transmission condition 
    \begin{equation*}
        v_0=\frac{\sqrt{2}}{2}\left(v_1+v_2\right).
    \end{equation*}
\end{enumerate}
\end{exam}

Example \ref{exam:path} indicates that the condition \eqref{eq:drzewo1} allows to choose the unique path from any point $x\in \mathcal{G}$ to the source and ensures well-posedness of the following procedure: $\mathcal{G}$ is a finite tree, so it is possible to re-enumerate edges in the way that for any two edges $e_{s}, e_{j}\in E$, and for any chosen path $e_{s}=e_{k_1},\ldots,e_{k_l}=e_{j}$; $k_i<k_{i+1}$ for all $i\in 1,\ldots,l-1$. Fix $e_j\in E$ and define a path $P_{j}=e_{k_1}e_{k_2}\ldots e_{k_{N_j}}$ of a length $L_j=\sum_{s=1}^{N_j} l_s$ that starts in a source and ends in $e_j$. Now define $u_{P_j}:(-\infty,l_j]\times [0,\infty)\rightarrow \mathbb{R}_+$ and $\mathring{u}_{P_j}:(-\infty,l_j]\rightarrow \mathbb{R}_+$ such that
\begin{eqnarray*}
u_{P_j}(x,t)&:=&\sum_{s=1}^{N_j}\, \left(\prod_{p=s}^{N_j-1}\frac{1}{b_{k_{p+1}k_{p}}^{01}}\right)\,\, u_{k_s}\left(x+\sum_{p=s}^{N_j-1}l_p,t\right)\, \chi_{\left(-\sum_{p=s}^{N_j-1}l_p, -\sum_{p=s+1}^{N_j-1}l_p\right]}(x),\\
\mathring{u}_{P_j}(x)&:=&u_{P_j}(x,0).
\end{eqnarray*}

\begin{rem}
The solution to the problem \eqref{eq:p_Burgers} for a finite tree $\mathcal{G}$ that satisfies  \eqref{eq:drzewo1} can be related with mono-dimensional case using the counterpart of Lax-Oleinik formula on the path sub-graph, namely the formula for any $j\in J$ is given by
\begin{subequations}
\begin{eqnarray}\nonumber
u_j(x,t)&=&\frac{x-y_j(x,t)}{t}\\ \nonumber
\text{where $y_j$ minimizes function}\phantom{xxxxx}&&\\ \nonumber
y\mapsto G_j(x,t,y)&=&\int_{-\infty}^y \mathring{u}_{P_j}(s)ds+\frac{(x-y)^2}{2t}.\phantom{xxxxxxxxxxx}
\end{eqnarray}
\end{subequations}
\end{rem}
Finally, it is worth underlining that the considerations presented in the proof of Theorem \ref{thm:p_exist} can be generalised in the number of directions. Firstly, we can examine conservation law on the edges of a network, coupled by the linear transmission of mass that satisfies the conservation of flux condition for $f\in C^1([0,\infty))$ such that
\begin{equation}\label{eq:f}
f''>0,\qquad f(0)=0\qquad \text{and}\qquad \lim_{u\rightarrow \infty} \frac{f(u)}{u}=+\infty.    
\end{equation} 
On the other hand, we can introduce some sources of mass in vertices $v_i$ such that $\phi_{ij}^+=0$ for any $j\in J$.

We formalise those observations into a remark.

\begin{rem}
Let $f$ be a flux function that satisfies \eqref{eq:f}. For any $\mathring{u}\in L^{\infty}([0,1],\mathbb{R}_+^m)$ and $\bar{u}\in L^{\infty}([0,T],\mathbb{R}_+^m)$, the proof of Theorem \ref{thm:p_exist} can be repeated to the following generalisation of a problem \eqref{eq:p_Burgers}, for almost all $t\in[0,T]$,
\begin{subequations}\label{eq:p_gen}
\begin{align}\label{eq:p_gen_e}
   \sum_{j\in J}\int_0^T  \int_{0}^{l_j} \left(u_j \partial_t \phi_j + f(u_j )\partial_x\phi_j\right) dxdt&=
    \sum_{j\in J}\int_{0}^{l_j} \mathring{u}_j(x)\phi_j(x,0)dx,\\[.2cm] \label{eq:p_gen_ic}
u_j(x,0)&=\mathring{u}_j(x)>0, \qquad x\in[0,l_j],\,j\in J,\\[.1cm] \label{eq:p_gen_tc} 
 u_j(0,t)&=\sum_{\left\{s\in J:\,e_s\in D_i^{in}\right\}}\,b^{01}_{js}(u) u_s(1,t),\qquad \text{for}\,\,\phi_{ij}^-\neq 0,\,\,\text{deg}_+(v_i)>0 \\[.1cm] \label{eq:p_gen_Kc}
     \sum_{\left\{j\in J:\, e_j\in D_i^{in}\right\}} f(u_j(l_j,t))&=\sum_{\left\{j\in J:\, e_j\in D_i^{out}\right\}} f(u_j(0,t)),\qquad \text{for}\,\,\text{deg}_+(v_i)>0,\\[.1cm]\label{eq:p_gen_bc}
     u_j(0,t)&=\bar{u}_j(t)\geq 0,\qquad \text{for}\,\,\phi_{ij}^-\neq 0,\,\,\text{deg}_+(v_i)=0.
\end{align}
\end{subequations}
\end{rem}
\begin{proof}
The proof of this fact can be found in \cite[Thm.~2.1]{LeFloch1988}. 
\end{proof}
\subsection{Dense subclass of positive solutions}

Note that for positive solutions one can distinguish a special class of functions which are preserved under the flow.  This class is the same as for the classical mono-dimensional Burgers' equation.
\begin{prop}\label{prop:W^+}
Let $\mathcal{G}$ be a metric tree. We introduce a class of functions $\mathcal{W}^+$ such that
\begin{multline}\label{eq:W+}
    f \in \mathcal{W}^+ \mbox{ \ \ iff \ \ } 
    \{ f \in B(\mathcal{G}): f \mbox{ is piece-wise 
    $C^1$ non-decreasing non-negative function,} \\
    \mbox{  the number of 
    jumps is finite and } 
    \mbox{ side derivatives exist at each point of $\mathcal{G}$} \}
\end{multline}

Then the class $\mathcal{W}^+$ is preserved by the flow generated by the Burgers' equation \eqref{eq:p_Burgers}, i.e. 
if $\mathring{u}\in \mathcal{W}^+$ then $u(t)\in \mathcal{W}^+$
for any $t>0$.
\end{prop}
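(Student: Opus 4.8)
The plan is to argue edge by edge, using the recursive construction and the explicit Lax--Oleinik formula \eqref{eq:explicitB} of Theorem \ref{thm:p_exist}. I would order the edges in the increasing order of Subsection \ref{sec:graph}, so that when edge $e_j$ is reached the solution on every edge ending at its starting vertex $e_j(0)$ is already known, and prove by induction that $u_j(\cdot,t)$ is non-negative, non-decreasing, piece-wise $C^1$ with finitely many jumps, and possesses one-sided derivatives everywhere. Non-negativity is free, since the solution produced by Theorem \ref{thm:p_exist} is non-negative, and the membership $u(t)\in B(\mathcal{G})$ is automatic because \eqref{eq:explicitB} satisfies \eqref{eq:p_Burgers_bc}--\eqref{eq:p_Burgers_Kc} by construction. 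Thus the real content is the propagation, through the vertices, of monotonicity and of the finite-jump, piece-wise $C^1$ structure.

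For monotonicity I would exploit that, at a point whose backward characteristic foot $y_j(x,t)$ lies in $[0,x]$, the first-order condition for the minimiser of $G_j$ in \eqref{eq:explicitB2} reads $\mathring u_j(y_j(x,t))=\frac{x-y_j(x,t)}{t}=u_j(x,t)$, so that $u_j(\cdot,t)=\mathring u_j\circ y_j(\cdot,t)$ is a composition of two non-decreasing maps: the map $x\mapsto y_j(x,t)$ is non-decreasing by the argument already run in the proof of Theorem \ref{thm:p_exist}, and $\mathring u_j$ is non-decreasing by hypothesis $\mathring u\in\mathcal{W}^+$. On the complementary region, where the foot is negative and the characteristic meets the starting vertex $x=0$ at time $\tau_j(x,t)$, one has $u_j(x,t)=u_j(0,\tau_j(x,t))$, so the profile inherits monotonicity from the boundary datum $g_j(t):=u_j(0,t)$ together with the monotonicity of $\tau_j$; the two regions match continuously at $y_j=0$.

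The decisive point is that the membership $\mathring u\in B(\mathcal{G})$ excludes any interior shock. Since the initial profile already satisfies the vertex relation \eqref{eq:p_Burgers_bc}, evaluating the boundary datum at $t=0^+$ gives $g_j(0^+)=\mathring u_j(0^+)$, so the Riemann problem at the corner $(0,0)$ of $e_j$ has equal left and right states and opens no shock. Moreover the inflow value at the head of each feeding edge is non-increasing in time --- at a fixed right endpoint the characteristic foot moves leftward as $t$ grows and hence picks up smaller values of the non-decreasing datum --- and each of the three transmission rules (i)--(iii) is monotone in its arguments, so $g_j$ is itself non-increasing in $t$. With $g_j(0^+)=\mathring u_j(0^+)\le \mathring u_j(x)$ and $g_j$ non-increasing, the characteristics emitted by the vertex are slower than, and spread apart from, those emitted by the initial data: no characteristics cross, which confirms that $u_j(\cdot,t)$ is non-decreasing and that no compressive shock is created inside $e_j$. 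This is the counterpart, in the non-negative setting, of the no-crossing of characteristics and is consistent with the one-sided Oleinik bound \eqref{eq:O_inequal}.

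For the remaining structural claims I would note that a non-decreasing datum has only upward jumps, each resolving instantly into a rarefaction fan, so for every $t>0$ the solution is continuous in the interior of each edge; the only surviving discontinuities of $u(t)$ on $\mathcal{G}$ are the finitely many jumps produced at the vertices by the transmission factors (the $\frac{\sqrt2}{2}$ in rule (ii), the $\ell^2$-gluing in rule (iii)), which is why \emph{non-decreasing} is understood edge-wise. Finiteness of the $C^1$ pieces and the existence of one-sided derivatives then follow from the finiteness of the characteristic families: finitely many initial $C^1$ pieces and jumps, together with a datum $g_j$ that is itself piece-wise $C^1$ and continuous in $t$ (by the inductive hypothesis applied upstream), generate finitely many rarefaction fans and smooth zones whose interfaces are mere kinks. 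The main obstacle, and the step I would treat most carefully, is precisely this no-shock argument: one must check that the compatibility $g_j(0^+)=\mathring u_j(0^+)$ and the time-monotonicity of the inflow survive the composition of transmission conditions along the tree --- in particular at vertices of the second kind, where the gluing $g_j(t)=\sqrt{u_k^2(1,t)+u_l^2(1,t)}$ mixes two upstream histories and must still be shown to be non-increasing and compatible at $t=0^+$.
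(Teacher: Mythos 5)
Your argument is correct and lands where the paper does, but it is organised around a different engine. You run the induction along the tree through the explicit Lax--Oleinik representation of Theorem \ref{thm:p_exist}, writing $u_j(\cdot,t)$ as $\mathring u_j\circ y_j(\cdot,t)$ on the part of the edge reached from the initial line and as $g_j\circ\tau_j(\cdot,t)$ on the part reached from the tail, and you derive preservation of monotonicity from the monotonicity of $y_j$, $\tau_j$ together with two facts about the inflow trace: the corner compatibility $g_j(0^+)=\mathring u_j(0^+)$ (available because $\mathcal W^+\subset B(\mathcal G)$ forces the initial datum to satisfy the vertex relations) and the fact that $g_j$ is non-increasing in time, propagated through the tree because the transmission solvers are monotone in the incoming head values. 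The paper's proof instead takes the interior preservation as the known mono-dimensional fact and spends its effort on the vertex itself: it splits an arbitrary vertex $v_i$ into a joining vertex $v_i'$ and a splitting vertex $v_i''$ as in \eqref{eq:v'&''} and checks that the two trace operations (square root of the sum of squares of the incoming traces, then fixed proportions onto the outgoing edges) preserve piecewise $C^1$ regularity, monotonicity and finiteness of jumps, with the jump count multiplied by at most $\mathrm{deg}_-(v_i)$ and controlled by the finiteness of the graph. Your route buys something the paper leaves implicit: an actual reason why no decreasing shock is injected at the tail of an outgoing edge, namely that the boundary characteristics are emitted more slowly than, and stay behind, those of the initial datum; you also correctly identify that the relevant trace is non-increasing in $t$ (consistent with the $\mathcal W^+_{opp}$ class used later in Lemma \ref{lem:p_tv-est}, whereas the printed proof loosely says ``non-decreasing''). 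Two small points to tidy: your time-monotonicity of the head value $u_s(1,t)$ is argued only for characteristics reaching the initial line, while for later times it must be closed by the same induction through the boundary region; and your list of transmission rules (i)--(iii) is the honeycomb one, whereas the proposition concerns a general metric tree --- the general solvers \eqref{eq:RS_min}--\eqref{eq:RS_max} are equally monotone in the incoming values, so nothing breaks, but the statement should be made at that level of generality.
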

\begin{proof}
Let $\mathring{u}\in \mathcal{W}^+$. Since in the interior of each edge we have the mono-dimensional 
situation then $\mathcal{W}^+$ class is preserved there. The only element that needs to be clarified is a transmission condition, namely that $u_{j}(0,t)$ is piece-wise $C^1$, non-decreasing with a finite number of jumps for every $j\in J$. 
The properties of solution going out from an arbitrary vertex $v_i$ in the tree $\mathcal{G}$ can be considered as the composition of flows going out of two vertices $v_i'$ and $v_i''$ which are associated with $v_i$ by the following relation
\begin{equation}\label{eq:v'&''}
  \text{deg}_+(v_i')=\text{deg}_+(v_i),\quad  \text{deg}_-(v_i')=1;\qquad \text{and}\qquad \text{deg}_+(v_i'')=1,\quad  \text{deg}_-(v_i'')=\text{deg}_-(v_i).
\end{equation}
See also Figure \ref{fig:node_div}. We can easily see that vertex $v_i'$ joins the flow, while $v_i''$ splits it into outgoing edges. 
In the case of $v_i'$, the flow in $e(0)$ is a square root of sum of squared flows of incoming edges. Since on each $e_j\in D_i^{in}$ the flow is non-decreasing, $C^1$ function, then this properties are preserved for $e(0)$ for all but finite number of points. Since for $v_i''$ the transmission conditions at the head of outgoing edges are just proportions of the flow coming to the tail of edge $e$, fine properties are guaranteed.


Finally, we note that the number of jumps can be multiplied by $\text{deg}_-(v_i)$ as the shock crosses $v_i$ but the finiteness of the graph ensures the control of the number of jumps. We shall also recall that under evolution some jumps may disappear.

\end{proof}

\begin{figure}
    \centering
    (i) \begin{tikzpicture}
\def\tmp{1.7};

\draw[->,thick] (-1/2,\tmp) node[below=2pt] {\scriptsize{$e_1$}} --  (1/2-2/6,\tmp/2+\tmp/6);
\draw[->,thick] (-2/3,\tmp/2) -- node[below=2pt] {\scriptsize{$e_2$}\phantom{xxx}} (1/2-1/3,\tmp/2);
\draw[->,thick] (-1/2,0) node[right=2pt] {\scriptsize{$e_3$}} --  (1/2-2/6,\tmp/2-\tmp/6);
\draw[->,thick] (1/2+1/6,\tmp/2+\tmp/6) node[right=2pt] {\scriptsize{$e_4$}} --  (1,\tmp);
\draw[->,thick] (1/2+1/6,\tmp/2-\tmp/6) node[right=2pt] {\scriptsize{$e_5$}} --  (1,0);

\draw (-2/3,\tmp/2) node[left=1pt] {$
\cdot$};	
\draw (1/2-1/3,\tmp/2) node[right=.5pt] {$v_i$};
\draw (1*10/11,\tmp*12/11) node[above=0.5pt, right=.25pt] {$\cdot$};
\draw (-1/2,\tmp*12/11) node[above=0.5pt, left=.25pt] {$\cdot$};
\draw (1*10/11,-\tmp*1/11) node[below=0.5pt, right=.25pt] {$\cdot$};
\draw (-1/2,-\tmp*1/11) node[above=0.5pt, left=.25pt] {$\cdot$};
\end{tikzpicture}\qquad (ii) \begin{tikzpicture}
\def\tmp{1.7};

\draw[->,thick] (-1/2,\tmp) node[below=2pt] {\scriptsize{$e_1$}} --  (1/2-2/6,\tmp/2+\tmp/6);
\draw[->,thick] (-2/3,\tmp/2) -- node[below=2pt] {\scriptsize{$e_2$}\phantom{xxx}} (1/2-1/3,\tmp/2);
\draw[->,thick] (-1/2,0) node[right=2pt] {\scriptsize{$e_3$}} --  (1/2-2/6,\tmp/2-\tmp/6);
\draw[->,thick] (1/2+1/3,\tmp/2) -- node[below=2pt] {\scriptsize{$e$}} (9/6,\tmp/2);

\draw (-2/3,\tmp/2) node[left=1pt] {$
\cdot$};	
\draw (1/2-1/3,\tmp/2) node[right=.5pt] {$v_i'$};
\draw (-1/2,\tmp*12/11) node[above=0.5pt, left=.25pt] {$\cdot$};
\draw (-1/2,-\tmp*1/11) node[above=0.5pt, left=.25pt] {$\cdot$};
\draw (9/6,\tmp/2) node[right=1pt] {$\cdot$};	
\end{tikzpicture} \qquad \begin{tikzpicture}
\def\tmp{1.7};

\draw[->,thick] (-2/3,\tmp/2) -- node[below=2pt] {\scriptsize{$e$}} (1/2-1/3,\tmp/2);
\draw[->,thick] (1/2+1/6,\tmp/2+\tmp/6) node[right=2pt] {\scriptsize{$e_4$}} --  (1,\tmp);
\draw[->,thick] (1/2+1/6,\tmp/2-\tmp/6) node[right=2pt] {\scriptsize{$e_5$}} --  (1,0);

\draw (-2/3,\tmp/2) node[left=1pt] {$
\cdot$};	
\draw (1/2-1/3,\tmp/2) node[right=.5pt] {$v_i''$};
\draw (1*10/11,\tmp*12/11) node[above=0.5pt, right=.25pt] {$\cdot$};
\draw (1*10/11,-\tmp*1/11) node[below=0.5pt, right=.25pt] {$\cdot$};
\end{tikzpicture}
    \caption{Transformation of arbitrary vertex $v_i$ in a tree $\mathcal{G}$, illustration (i), into two vertices $v_i'$, $v_i''$, illustration (ii), according to formula \eqref{eq:v'&''} introduced in Proposition \ref{prop:W^+}. }
    \label{fig:node_div}
\end{figure}
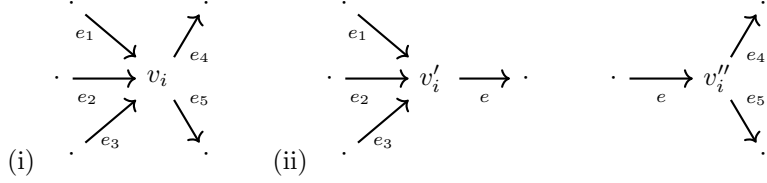

In further considerations we will use also $\mathcal{W}^+_{opp}$ class such that 
\begin{multline}\label{eq:W+_{opp}}
    f \in \mathcal{W}^+_{opp} \mbox{ \ \ iff \ \ } 
    \{ f \in B(\mathcal{G}): f \mbox{ is piece-wise 
    $C^1$ non-increasing non-negative function,} \\
    \mbox{  the number of 
    jumps is finite and } 
    \mbox{ side derivatives exist at each point of $\mathcal{G}$} \}
\end{multline}

\bigskip

\begin{df}
Let $u$ be a function defined over the graph $\mathcal{G}$. We say that $u\in TV(\mathcal{G})$ iff 
\begin{equation*}
    \|u\|_{TV(\mathcal{G)}}=\sum_{j\in J} 
    \|u_j\|_{TV(e_j)} \mbox{ \ is finite}.
\end{equation*}
\end{df}

Let us start with the estimates of $TV$-norm of non-negative solution for specified family of graphs that can be generalised for arbitrary metric trees. 

\begin{lem}\label{lem:p_tv-est}
Let $\mathcal{G}$ be a metric honeycomb tree with one source $e_1(0)$ and one sink $e_m(1)$. For $u$ being a solution to the problem \eqref{eq:p_Burgers} given by Theorem \ref{thm:p_exist}, the following estimate holds
\begin{equation}\label{tv-est}
    \sup_{t\in [0,T]} \|u^2(t)\|_{TV(\mathcal{G})}    +\int_0^T|\partial_t u_m^2|(1,t) dt \leq 2^{\kappa_\mathcal{G}}\left(\|\mathring{u}\|_{TV(\mathcal{G})} + \int_0^T |\partial_t u_1^2|(0,t)dt\right),
\end{equation}
where $\kappa_\mathcal{G}$ depends on graph structure. 
\end{lem}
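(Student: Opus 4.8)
The plan is to track the squared velocity $v_j:=u_j^2$ on each edge, because it is precisely this quantity that enters the vertex conditions linearly. By the Kirchhoff law \eqref{eq:p_Burgers_Kc} together with the minimal transmission solver \eqref{eq:RS_min}, a first-kind vertex (one incoming edge $e_j$, two outgoing $e_k,e_l$) acts by $v_k(0,t)=v_l(0,t)=\tfrac12 v_j(l_j,t)$, whereas a second-kind vertex (two incoming $e_j,e_k$, one outgoing $e_l$) acts by $v_l(0,t)=v_j(l_j,t)+v_k(l_k,t)$. I would organise the whole estimate as an induction along the increasing order of edges from Subsection \ref{sec:graph}, which is a topological order running from the source $e_1$ to the sink $e_m$. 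For each edge introduce the spatial variation $S_j(t):=\|v_j(\cdot,t)\|_{TV[0,l_j]}$, the initial variation $S_j^0:=\|\mathring v_j\|_{TV[0,l_j]}$, and the temporal trace variations $I_j:=\int_0^T|\partial_t v_j(0,t)|\,dt$ and $O_j:=\int_0^T|\partial_t v_j(l_j,t)|\,dt$ at tail and head.

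The core is a \emph{single-edge estimate}, asserting that on one edge both the stored spatial variation and the variation radiated through the outflow boundary are controlled by the initial spatial variation plus the variation fed in through the inflow boundary:
\begin{equation*}
\sup_{t\in[0,T]}S_j(t)+O_j\;\le\;2\bigl(S_j^0+I_j\bigr).
\end{equation*}
I would prove this from the explicit representation in Theorem \ref{thm:p_exist}. Since $u\ge 0$, all characteristics travel to the right, so the value at $(x,t)$ is read off either from the initial datum (through $y_j(x,t)\ge 0$) or from the inflow trace (through $\tau_j(x,t)$), and the monotonicity of $x\mapsto y_j(x,t)$ proved there splits $[0,l_j]$ into an initial-data region and a boundary-data region. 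On each region $v_j$ is a monotone reparametrisation of $\mathring v_j$ or of $v_j(0,\cdot)$, so its variation is inherited without increase; the Oleinik one-sided bound \eqref{eq:O_inequal} ensures that shocks may only merge, so the variation never grows inside the edge. The factor $2$ merely reflects that each unit of incoming variation is counted once while it is stored in the strip and once again as it leaves through $x=l_j$.

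To close the induction, differentiate the linear vertex relations in $t$: at a first-kind vertex $I_k=I_l=\tfrac12 O_j$ (the outgoing variation splits, total conserved), and at a second-kind vertex $I_l\le O_j+O_k\le 2\max(O_j,O_k)$ (two streams merge). Hence crossing any vertex amplifies the trace variation by a factor at most $2$. Composing the single-edge estimate with these vertex inequalities along the unique source-to-$e_j$ path---unique because $\mathcal{G}$ is a tree, so branches that split can never re-merge---gives bounds of the form $O_j\le 2^{\,d_j}\bigl(S^0+I_1\bigr)$, where $d_j$ is the number of vertices on that path, $S^0:=\sum_i S_i^0$ and $I_1=\int_0^T|\partial_t u_1^2|(0,t)\,dt$. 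Taking $\kappa_{\mathcal G}:=\max_j d_j$, a purely structural quantity (essentially the depth of the tree together with its number of edges), then summing $\sup_t S_j(t)$ over all edges for the left-hand $TV(\mathcal{G})$-norm and retaining $O_m$ for the sink term yields \eqref{tv-est}. Finally, since $u$ obeys a maximum principle, $\|u\|_\infty$ is bounded by the data and $\|\mathring v\|_{TV}\le 2\|u\|_\infty\|\mathring u\|_{TV}$, which reconciles the $v=u^2$ bookkeeping with the $\|\mathring u\|_{TV(\mathcal{G})}$ on the right-hand side.

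The genuine obstacle is the single-edge trace estimate, specifically the control of $O_j=\int_0^T|\partial_t v_j(l_j,t)|\,dt$. While spatial TVD for scalar Burgers is classical, bounding the \emph{temporal} variation of the outflow flux requires following carefully how shocks generated by the initial data interact with the time-dependent inflow data before exiting at $x=l_j$; degenerate configurations, such as a shock grazing the boundary or rarefactions emanating from the corner $(0,0)$ where the initial and boundary data meet, must be treated through the minimiser structure of $G_j$ and $H_j$ rather than by a naive transport picture.
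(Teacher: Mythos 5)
Your overall architecture is the same as the paper's: a single-edge inequality of the form $\sup_t\|u_j^2\|_{TV(e_j)}+\int_0^T|\partial_t u_j^2|(1,t)\,dt\le 2\bigl(\|\mathring u_j^2\|_{TV(e_j)}+\int_0^T|\partial_t u_j^2|(0,t)\,dt\bigr)$, vertex inequalities that propagate the temporal trace variation with a factor at most $2$ (using $\theta_z\le 1$ at first-kind vertices and subadditivity at second-kind vertices), and a composition along the tree producing the $2^{\kappa_{\mathcal G}}$ constant. That part of your bookkeeping is sound and essentially identical to the paper's.

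The genuine gap is exactly where you flag it: the single-edge estimate itself, and in particular the bound on the outflow temporal variation $O_j=\int_0^T|\partial_t u_j^2|(l_j,t)\,dt$. Your ``monotone reparametrisation'' argument, even if made rigorous, only controls the \emph{spatial} variation of $u_j^2$ at fixed $t$; converting the variation stored in the boundary-data region into a bound on the \emph{temporal} variation of the trace at $x=l_j$ (and, symmetrically, feeding the temporal variation $I_j$ of the inflow data into a spatial bound) is precisely the step you leave open, and it is the step the whole induction hinges on, since $O_j$ is what gets handed to the next vertex. The paper closes this differently: it works in the dense class $\mathcal W^+$ of piecewise $C^1$, non-decreasing, non-negative functions with finitely many jumps, where one can differentiate $\int_{e_j}|\partial_x u_j^2|\,dx$ in time directly. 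The interior jump terms are non-increasing by the Rankine--Hugoniot and Lax conditions, and the boundary contributions appear with a definite sign because $\partial_t u_j^2=-2u_j^2\partial_x u_j\le 0$ in $\mathcal W^+$; this yields the differential inequality
\begin{equation*}
\frac{d}{dt}\int_{e_j}|\partial_x u_j^2|\,dx+|\partial_t u_j^2|(1,t)\le|\partial_t u_j^2|(0,t)
\end{equation*}
in which the trace term $O_j$ arises automatically on the left-hand side rather than having to be extracted from the Lax--Oleinik minimiser. A density and compactness argument (approximating $\mathring u_j$ in $\mathcal W^+$ and the boundary datum in $\mathcal W^+_{opp}$ without increasing the $TV$-norms) then transfers the estimate to general $TV$ data. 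If you want to keep your route through the explicit formula of Theorem \ref{thm:p_exist}, you would need to carry out the corner and shock-grazing analysis you mention; as written, the proposal asserts the key inequality rather than proving it.
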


\begin{proof}
Let us remind first that solutions $u$ given by Theorem \ref{thm:p_exist} are non-negative. We start with showing that $u\in \mathcal{W}^+$ restricted to an arbitrary edge $e_j$ satisfies 
\begin{equation}\label{tv1}
    \frac{d}{dt} \int_{e_j} |\partial_x u_{j}^2|dx + |\partial_t u_j^2|(1,t) \leq |\partial_t u_{j}^2|(0,t).
\end{equation}
If $u$ is from $\mathcal{W}^+$-class, then for $u_j$ there exists a finite sequence $0=\xi_0(t)<\xi_1(t) < ... < \xi_{K(t)}(t)=1$ for a.e. $t\in [0,T)$ such that
\begin{equation*}
    u_j(x,t)=\sum_{k=0}^{K(t)-1} u_j(x,t)\chi_{[\xi_k(t),\xi_{k+1}(t)]}(x),
\end{equation*}
and on each interval $[\xi_k(t),\xi_{k+1}(t)]$ $u$ is non-decreasing. We extended it by the left and right hand side limits. Furthermore,
$K(t)$ is piece-wise constant so there exists a finite sequence $0<t_0<t_1< ... < t_M<T$ such that $K(t)$ is constant over each interval $(t_i,t_{i+1})$. Note also $K(t)$ is decreasing 
as $u_1(0,t)=0$ by Kirchhoff condition. In accordance with previous notation we distinguish
the left and right limits at points $\xi_k$ by respectively $u(\xi^{\mp}_k(t),t)$. We have
\begin{multline}
    \frac{d}{dt} \int_{[0,1]} |\partial_x u_j^2| dx =
    \\
     \frac{d}{dt} \left[
    \sum_{k=0}^{K(t)-1} (u_j^2(\xi_{k+1}^-(t),t) - u_j^2(\xi_k^+(t),t)) {+
    \sum_{k=0}^{K(t)-2} (u_j^2(\xi_{k+1}^-(t),t) - u_j^{2}(\xi_{k+1}^+(t),t))} \right].
\end{multline}
Since, for a.e. $t\in (t_i,t_{i+1})$, $K(t)$ is constant, then for $0<k<K(t)$ 
\begin{equation*}
    \frac{d}{dt}   u_j(\xi_{k+1}^-(t),t) = \partial_t u_j(\xi_{k+1}^-(t),t) + \partial_x u_j(\xi_{k+1}^-(t),t) \frac{d\xi_{k+1}(t)}{dt}.
\end{equation*}
But by the Rankine-Hugoniot and Lax conditions for $\xi_{k+1}$, see \eqref{eq:RH&L}, 
we conclude
\begin{equation*}
   \frac{d}{dt}   u^2_j(\xi_{k+1}^-(t),t) = u_j(\xi_{k+1}^-(t),t) \partial_x  u_j(\xi_{k+1}^-(t),t) 
   \left(u_j(\xi_{k+1}^+(t),t) - u_j(\xi_{k+1}^-(t),t)\right) \leq 0.
\end{equation*}
In the same manner we prove that
\begin{equation*}
   \frac{d}{dt}   u_j^2(\xi_{k+1}^+(t),t) \geq 0.
\end{equation*}
Taking into account the boundary terms coming from $k=0,K(t)$ we find that
\begin{equation*}
    \frac{d}{dt} \int_{[0,1]} |\partial_x u_j^2| dx\leq \partial_t u_j^2(1,t)-\partial_t u_j^2(0,t).
\end{equation*}
The boundary terms $-\partial_t u_j^2(z,t)=2u_j^2(z,t)\partial_x u_j(z,t)$, $z=0,1$ are non-negative since we are working in $\mathcal{W}^+$-class, which leads to (\ref{tv1}).

The class $\mathcal{W}^+$ is dense in $TV(\mathcal{G})$, so it allows us to approximate any $TV$-flow by an element from the $\mathcal{W}^+$--class. In order to pass to the limit we need to generate the global estimate, namely one which is independent of $K(t)$. Integrating (\ref{tv1}) over $(t_i,t_{i+1})$ we get
\begin{equation}
    \sup_{t\in [t_i,t_{i+1}]} \left( \|u_j^2\|_{TV(e_j)}(t)
    + \int_{t_i}^{t} |\partial_t u_j^2|(1,t)dt \right) \leq \int_{t_i}^{t_{i+1}} |\partial_t u_j^2|(0,t)dt
    + \|u_j^2(t_i)\|_{TV(e_j)}.
\end{equation}
Summing up over all intervals $(t_i,t_{i+1})$ we get
\begin{equation}\label{eq:est1}
    \|u_j^2(T)\|_{TV(e_j)}
    + \int_{0}^{T} |\partial_t u_j^2|(1,t)dt \leq \int_{0}^{T} |\partial_t u_j^2|(0,t)dt
    + \|\mathring{u}_j^2\|_{TV(e_j)}.
\end{equation}
In order to make $TV$-norm of $u_j^2$ $T$-independent we transform \eqref{eq:est1} into 
\begin{equation}\label{eq:est2}
    \sup_{[0,T]} \|u_j^2\|_{TV(e_j)}
    + \int_{0}^{T} |\partial_t u_j^2|(1,t)dt \leq 2\left( \int_{0}^{T} |\partial_t u_j^2|(0,t)dt
    + \|\mathring{u}_j^2\|_{TV(e_j)})\right).
\end{equation}
Now we are ready to construct an approximation sequence tending to the desired solution for some general data $\mathring{u}_j\in TV(e_j)$. For given $\epsilon>0$ and $\bar{u}_j=u_j(0,\cdot) \in TV(0,T)$ we claim there exist 
\begin{equation}
    \mathring{u}_{j\epsilon} \in \mathcal{W}^+
    \mbox{ \ \ and \ \ } 
    \bar{u}_{j\epsilon} \in \mathcal{W}^+_{opp}
\end{equation}
such that
\begin{equation}
    \|\mathring{u}_{j\epsilon}^2 - \mathring{u}_j^2\|_{L^1(e_j)} < \epsilon
    \mbox{ \ \ and \ \ }
    \|\bar{u}_{j\epsilon}^2 - \bar{u}_j^2\|_{L^1(0,T)}
    < \epsilon
\end{equation}
and
\begin{equation}
    \|\mathring{u}_{j\epsilon}^2\|_{TV(e_j)}
    \leq \|\mathring{u}_j^2\|_{TV(e_j)}
    \mbox{ \ \ and \ \ }
    \|\bar{u}^2_{j\epsilon}\|_{TV(0,T)} \leq \|
    \bar{u}_j^2\|_{TV(0,T)}.
\end{equation}

So the considerations for the $u\in\mathcal{W}^+$ deliver the existence of $u_{\epsilon}$ solutions on the time interval $[0,T]$ and \eqref{eq:est2} implies the following estimate independent of $\epsilon$.
\begin{multline}
\qquad    \sup_{[0,T]} \|u^2_{j\epsilon}\|_{TV(e_j)}
    + \int_{0}^{T} |\partial_t u^2_{j\epsilon}|(1,t)dt \leq 2\left( \int_{0}^{T} |\partial_t u^2_{j\epsilon}|(0,t)dt
    + \|\mathring{u}^2_{j\epsilon}\|_{TV(e_j)}\right)
    \\
   \leq  2\left( \int_{0}^{T} |\partial_t \bar{u}_j^2|(0,t)dt
    + \|\mathring{u}_j^2\|_{TV(e_j)}\right).
\end{multline}
The above estimates imply the uniform bound for
\begin{equation}
    \partial_t u_\epsilon \in L^\infty(0,T;\mathcal{M}(e_j)).
\end{equation}
This leads, up to a subsequence $\epsilon\to 0$, to 
\begin{equation}
    \begin{array}{lcr}
    u_{j\epsilon} \to u_j & \mbox{in} & 
    L^1([0,T] \times e_j),\\
    u_{j\epsilon} \rightharpoonup^\ast  u_j & \mbox{in} &
    L^\infty([0,T]\times e_j),\\
    u_{j\epsilon}|_{x=l_j} \to u_j|_{x=l_j} & \mbox{in} &
    L^1(0,T).
    \end{array}
\end{equation}
In particular we have the point-wise convergence in the domain and at the boundary $\{x=1\}$. So we conclude $u$ is the solution to Burgers' equation at $e_j$.

\smallskip

Finally, to obtain $TV$-estimate \eqref{tv-est} for the whole graph we proceed recursively from the edge $e_j$ to the source $e_1(0)$, specifying the right hand side of (\ref{tv1}). $\mathcal{G}$ is a metric honeycomb tree so there is restricted number of vertices' types, see Definition \ref{def:p+h} and remarks below. 

Consider a vertex of the first kind $v_i$, and denote edges adjacent to it in the following way $D_i=(\left\{e_j\right\},\left\{e_k,e_l\right\})$. By the transmission conditions \eqref{eq:p_Burgers_Kc} 
\begin{equation}
    u_z^2(0,t)=\theta_z u_{j}^2(1,t) \mbox{ \ \ with \ \ }
    \theta_z \leq 1, \,\,z=k,l.
\end{equation}
Then by differentiation in time we find that 
\begin{equation}
    |\partial_t u_z^2|(0,t)=\theta_z |\partial_t u_{j}^2|(1,t),\quad z=k,l.
\end{equation}
So the identity (\ref{tv1}) gives a term on the left hand side which dominates the terms $|\partial_t u_{z}^2|(0,t)$, $z=k,l$, namely
\begin{equation}\label{eq:1k_est}
       \frac{d}{dt}\left( \int_{e_j} 2 |\partial_x u_{j}^2|dx + \int_{e_k} |\partial_x u_{k}^2|dx + \int_{e_l} |\partial_x u_{l}^2|dx\right)+ |\partial_t u_k^2 |(1,t) + |\partial_t u_l^2 |(1,t)\leq 
    2|\partial_t u_j^2|(0,t).
\end{equation}
Note that $v_i$ has two out-going edges and $\theta_z\leq 1$ for $z=k,l$, therefore, the equation for $e_{j}$ is taken twice. 

In the second case as $v_i$ is of the second kind, using notation $D_i=(\left\{e_j,e_k\right\},\left\{e_l\right\})$,  we have
\begin{equation}
    u_{j}^2(1,t)+u_{k}^2(1,t)=u_{l}^2(0,t).
\end{equation}
Then we easily deduce that
\begin{equation}\label{bo-3}
    |\partial_t u_l^2|(0,t) \leq | \partial_t u_{j}^2|(1,t)+
    | \partial_t u_{k}^2|(1,t),
\end{equation}
and analogously to \eqref{eq:1k_est} we obtain
\begin{equation}\label{eq:2k_est}
       \frac{d}{dt}\left( \int_{e_j} |\partial_x u_{j}^2|dx + \int_{e_k} |\partial_x u_{k}^2|dx + 2\int_{e_l} |\partial_x u_{l}^2|dx\right)+ |\partial_t u_l^2 |(1,t)\leq 
    |\partial_t u_j^2|(0,t)+|\partial_t u_k^2|(0,t).
\end{equation}
Finally taking the vertex from the path graph such that $D_i=(\left\{e_j\right\},\left\{e_k\right\})$ we have a conservation of mass in the vertex and consequently
\begin{equation}\label{eq:0k_est}
       \frac{d}{dt}\left( \int_{e_j} |\partial_x u_{j}^2|dx + \int_{e_k} |\partial_x u_{k}^2|dx \right)+ |\partial_t u_k^2 |(1,t)\leq 
    |\partial_t u_j^2|(0,t).
\end{equation}

Repeating iteratively above steps, and taking all edges with required multiplicity $\kappa_j$ that depends on the degree of vertex and its position in the graph we obtain
\begin{equation}
    \frac{d}{dt} \sum_{j\in J}\int_{e_j} 2^{\kappa_j}|\partial_x u_{j}^2|dx + |\partial_t u_N^2 |(1,t) \leq 
    2^{\kappa_1}|\partial_t u_1^2|(0,t),
\end{equation}
since the graph $\mathcal{G}$ has exactly one source $e_1(0)$ and one sink $e_m(0)$. After the integration by parts implies (\ref{tv-est}). 

\end{proof}

\begin{rem}
Estimate derived in Lemma \ref{lem:p_tv-est} can be extended into arbitrary metric tree $\mathcal{G}$ having sources $e_j(0)$, $j=1,\ldots, s$ and sinks $e_j(l_j)$ for $j=m-S+1,\ldots, m$
\begin{equation}\label{tv-est-gen}
    \sup_{t\in [0,T]} \|u^2(t)\|_{TV(\mathcal{G})}    +\int_0^T\sum_{j=m-S+1}^m|\partial_t u_j^2|(l_j,t) dt \leq C_\mathcal{G}\left(\|\mathring{u}\|_{TV(\mathcal{G})} + \int_0^T\sum_{j=1}^s |\partial_t u_j^2|(0,t)dt\right).
\end{equation}

\end{rem}
\begin{proof}
The general case is slightly more involving. Assume that for vertex $v_i$
\begin{equation*}
    D_i^{in}=\{e_{k_1}, ... , e_{k_p}\} \mbox{ \ \ and \ \ } 
    D_i^{out}=\{ e_{r_1},  ... , e_{r_q} \}. 
\end{equation*}
Then of course by the Kirchhoff condition
\begin{equation*}
    \sum_{i=1}^p u^2_{k_i}(l_{k_i},t)=\sum_{j=1}^q u^2_{r_j}(0,t),
\end{equation*}
and for appropriate constants $\theta_{r_j}\leq 1$, $j=1,\ldots,q$,
\begin{equation*}
    |\partial_t u_{r_j}^2|(0,t)\leq \theta_{r_j} \sum_{i=1}^p |\partial_t u_{k_i}|(l_{k_i},t).
\end{equation*}
Taking into account multiplicity of incoming and outgoing edges, it leads to 
\begin{multline} \nonumber
       \frac{d}{dt}\left( \text{deg}_{-}(v_i)\sum_{i=1}^p\int_{e_{k_i}} |\partial_x u_{k_i}^2|dx + \text{deg}_{+}(v_i)\sum_{j=1}^q\int_{e_{r_j}} |\partial_x u_{r_j}^2|dx \right)+ \text{deg}_{+}(v_i)\sum_{j=1}^q |\partial_t u_{r_j}^2 |(l_{r_j},t)\\
       \leq \text{deg}_{-}(v_i)\sum_{i=1}^p
    |\partial_t u_{k_i}^2|(0,t).
\end{multline}
The rest of estimates follows as for honeycomb thee in Lemma \ref{lem:p_tv-est}.

\end{proof}
\section{Stitching solutions on the honeycomb tree} \label{sec:gen_NBE}

In this part we generalise the considerations from Subsection \ref{sec:posit_BC} into the case of solutions of an arbitrary sign. With no surprise, the major problem of this construction is a determination of physically justified behaviour in vertices for different sign velocities at adjacent edges. In  the whole Section \ref{sec:gen_NBE}  we restrict ourselves to honeycomb trees since they consist of exactly two kinds of vertices which additionally provide the same possible cases to consider.

\subsection{Derivation of transmission conditions}\label{sec:gen_trans}

In order to keep the well-posedness of the solution in the terms of the distributional formulation, see reasoning in \eqref{eq:weak} -- \eqref{eq:parts}, we  are required to control the Kirchhoff conditions \eqref{eq:Kirch}. Using the notation introduced in Subsection \ref{sec:posit_BC}, we denote by $t^{\mp}$ time shortly before/after the flow through the vertex at $t$. Denote the set of \emph{edges in which the mass enters the vertex $v_i$} at $t>0$ by $\mathcal{F}_i(t):=\mathcal{F}_i^{in}(t)\cup \mathcal{F}_i^{out}(t)$ where
\begin{eqnarray}
\mathcal{F}_i^{in}(t):=\left\{e_j\in D_i^{in}:\,\,u_j(1,t^-)\geq 0\right\} \quad\text{and} \quad \mathcal{F}_i^{out}(t):=\left\{e_j\in D_i^{out}:\,\,u_j(1,t^-)\leq 0\right\}.
\end{eqnarray}
Furthermore, we need to specify the direction of a flow through the vertex. We say that \emph{flow agrees with (is opposite to) the direction of a vertex} $D_i=(D_i^{in},D_i^{out})$ at $t>0$, for some $v_i\in V$, if
\begin{equation*}
\sum_{e_j\in \mathcal{F}_i^{in}(t)} u_j^2(1,t^-)\gtrless \sum_{e_j\in \mathcal{F}_i^{out}(t)} u_j^2(0,t^-).
\end{equation*}
If there is an equality in the above equation, then there is no flow through the vertex $v_i$ at $t$. Let us define  $\mathcal{D}_i=\left(\mathcal{D}_i^{in},\mathcal{D}_i^{out}\right)$ a \emph{flow direction of a vertex $v_i$} which is a counterpart of vertex direction in the case of metric graph. Namely,
\begin{eqnarray}\label{eq:flow_dir}
\mathcal{D}_i^{in/out}=\left\{\begin{array}{ll}
     D_i^{in/out}&\text{for flow that agrees with the direction of a vertex},  \\
     D_i^{out/in}& \text{for flow opposite to the direction of a vertex}.
\end{array}\right.
\end{eqnarray}
We say that the \emph{flow direction is positive (negative)} in the first (second) case in \eqref{eq:flow_dir} and write respectively $\text{sgn}(\mathcal{D}_i)=1$ ($\text{sgn}(\mathcal{D}_i)=-1$). 

In the following considerations we redefine the maximal and minimal transmission solver $TS_i^z(t)$, $z=m,M$, generalising conditions presented in Subsection \ref{sec:posit_BC}. We assume
\begin{enumerate}
    \item[(i)]  Kirchhoff conditions \eqref{eq:Kirch},
    \item[(ii)] continuity conditions in vertices different then sources or sinks generalising \textit{(LC)}, namely for a.e. $t\in(0,T)$
\begin{equation}\tag{\textit{FC}}
u_j(v_i,t^+)=\left\{\begin{array}{ll} u_j(v_i,t^-)& \text{for}\,\,e_j\in \mathcal{D}_i^{in}\cap \mathcal{F}_i(t),\\
0& \text{for}\,\,e_j\in \mathcal{D}_i^{in}\setminus \mathcal{F}_i(t),
\end{array}\right.
\end{equation}

\item[(iii)] energy minimization/maximization condition, with function $\mathcal{E}_i:\Pi_{e_j\in D_i^{in}\cap D_i^{out}}U_j\rightarrow \mathbb{R}$ being ge\-ne\-ra\-li\-zation of \eqref{eq:Energy}, given by the formula
	\begin{equation*}
	\begin{array}{lcl}
	\mathcal{E}_i(u(v_i,t))&=& \displaystyle  \text{sgn}(\mathcal{D}_i) \left[ \sum_{j:\,e_j\in \mathcal{D}_i^{in}} \mathcal{E}_{ij}^+(u(v_i,t))+\sum_{j:\,e_j\in \mathcal{D}_i^{out}} \mathcal{E}_{ij}^-(u(v_i,t))\right], \\ [.5cm]
	\mathcal{E}_{ij}^{\pm}(u(v_i,t))&=& \displaystyle  \frac{u_j^3(v_i,t^{\mp})-u_j^3(v_i,t^{\pm})}{3}\\
	&-&\displaystyle  \frac{\left(u_j(v_i,t^{\mp})-u_j(v_i,t^{\pm})\right)^3}{12}\theta\left(\text{sgn}(\mathcal{D}_i)\left(u_j(v_i,t^{\mp})- u_j(v_i,t^{\pm})\right)\right),
    \end{array}
    \end{equation*}
with a domain
\begin{eqnarray}\nonumber
    U_j&=&\mathbb{R}\qquad \text{for}\,\,e_j\in \mathcal{D}_i^{in},\\\nonumber
    U_j&=&\left(\min(-u_j(v_i,t^-), \text{sgn}(\mathcal{D}_i)\infty) ,\max(-u_j(v_i,t^-), \text{sgn}(\mathcal{D}_i)\infty)\right)\cup\left\{-u_j(v_i,t^-)\right\},\\ \nonumber
    &&\hspace{9.5cm}\text{for}\,\,e_j\in \mathcal{D}_i^{out} \cap \mathcal{F}_i^{out},
\\\nonumber
    U_j&=&\left(\min(0, \text{sgn}(\mathcal{D}_i)\infty) ,\max(0, \text{sgn}(\mathcal{D}_i)\infty)\right)\cup\left\{0\right\} \quad\text{for}\,\,e_j\in \mathcal{D}_i^{out} \setminus \mathcal{F}_i^{out}.
\end{eqnarray}
\item[(iv)] decreasing flow with respect to edge enumeration in the case of $\mathcal{E}_i$ maximization, \textit{(DF)}. 
\end{enumerate}

It is easy to notice that the form of condition $(\textit{FC})$ assures that there is no flow within the sets $\mathcal{D}_i^{in/out}$. In the case of condition (iii) the generalisation is based on the $\mathcal{E}_i$ domain's change. The restriction of the value of solutions for $e_j\in \mathcal{D}_i^{out}$ prevents the situation that there exists an edge $e_j(0)=v_i$ (resp. $e_j(l_j)=v_i$), in which the flow direction at $e_j(0)$ (resp. $e_j(l_j)$) is opposite to the flow at the vertex $v_i$ and there is no shock at $e_j(0)$ (resp. $e_j(l_j)$).  

Based on $TS^z_i$, $z=m,M$, which satisfy conditions (i) -- (iii); we can repeat the definition of $(TS_i^z)^{\star}$, $z=m,M$ given in Definition \ref{def:mM_RS}. Finally, we are ready to present transmission conditions derived by $(TS_i^z)^{\star}$ for the honeycomb tree.\newline

{\sc Case I.} Sources and sinks
\smallskip

In analogy to non-negative case we assume that for $v_i$ being a source (a sink) we have $u_j(v_i,t)=0$, $e_j\in D^{out}_i$ ($e_j\in D^{in}_i$). \newline

{\sc Case II.} Vertices from a path graph
\smallskip

Let $v_i$ be a vertex related to the path graph such that $D_i=(\left\{e_j\right\}, \left\{e_k\right\})$. By \textit{(FC)} we have the behaviour analogous to the mono-dimensional case.
\begin{enumerate}
    \item[1.] $u_j(1,t^-)\geq 0$ and $u_k(0,t^-)\leq 0$\newline
        
    \vspace{-.3cm} \noindent Let us specify the flow through the vertex.
    \begin{itemize}
    \item[a)] $u_j^2(1,t^-)\geq u_k(0,t^-)$
    We have the flow that agrees with the direction of vertex and
    \begin{equation}\label{eq:pcase1a}
        u_j(1,t^+)=u_k(0,t^+)=u_j(1,t^-).
    \end{equation}
    \item[b)] $u_j^2(1,t^-)< u_k(0,t^-)$
    We have the flow opposite to the direction of vertex and
    \begin{equation}\label{eq:pcase1b}
        u_j(1,t^+)=u_k(0,t^+)=u_k(1,t^-).
    \end{equation}
    \end{itemize}
    
    \item[2.] $u_j(1,t^-)< 0$ and $u_k(0,t^-)> 0$\newline
        
    \vspace{-.3cm} \noindent There is no flow that directs the vertex hence
    \begin{equation*}
        u_j(1,t^+)=u_k(0,t^+)=0.
    \end{equation*}
    
    \item[3.] $u_j(1,t^-)\cdot u_k(0,t^-)\geq 0$
    \newline
        
    \vspace{-.3cm} \noindent We have either \eqref{eq:pcase1a} for $\text{sgn}(u_j(1,t^-)\geq 0$, or \eqref{eq:pcase1b} for $\text{sgn}(u_j(1,t^-)\leq 0$.
\end{enumerate}

{\sc Case III.} Vertices of the hexagonal grid of the first and second kind
\smallskip

For the illustration see Figure \ref{fig:HT1&2} with the notation changed from $e_1,e_2,e_3$ to respectively $e_j,e_k,e_l$. It is worth mentioning that considerations for vertices of the first and second kind are analogous hence we concentrate only on a vertex of a second kind, see Fig. \ref{fig:HT1&2}(ii).
    \begin{enumerate}
        \item[1.] $u_j(1,t^-)\geq 0, \qquad u_k(0,t^-) \geq 0, \qquad u_l(0,t^-) < 0$\newline
        
        \vspace{-.3cm} \noindent Firstly we need to specify the direction of flow through the vertex. 
        \begin{itemize}
            \item[a)] $u_j^2(1,t^-) \geq u_l^2(0,t^-)$\newline
        
        \vspace{-.3cm} \noindent In this case the flow agrees with the direction of a vertex (goes through the vertex to the right) and therefore values of solution after the flow should not depend on values in edges from $D_i^{out}$. Intuitively, the character of the vertex should therefore fit to the case of constant sign flows. Obviously $u_j(1,t^+)=u_j(1,t^-)$. Consider now three cases related to the choice of maximal and minimal transmission solver. 
        \begin{enumerate}
            \item[--] In the first (maximal) one, $k<l$, the total energy should go to the edge $e_k$, and zero to $e_l$. Since $u_l(0,t^-)<0$ then the flow reaches the vertex and the influence of this flow needs to be somehow balanced to maintain the proper direction of a flow. Therefore, we divide the flow from $e_j$ into two parts in such a way that 
            \begin{equation}\label{eq:case1a1}
            u_l(0,t^+)=-u_l(0,t^-) \mbox{ \ \ and \ \ } u_k(0,t^+)=\sqrt{u_j^2(1,t^-) - u_l^2(0,t^-)}.
            \end{equation}
            \item[--] The second (maximal) case, $k>l$, is when the whole energy is going to $e_l$, then 
            \begin{equation}\label{eq:case1a2}
             u_k(0,t^+)=0 \mbox{ \ \ and \ \ } u_l(0,t^+)=u_j(1,t^-).
            \end{equation}
            \item[--] The last case, related to energy minimization, is more involved. We should have \begin{equation}\label{eq:case1a3}
            u_k(0,t^+)=u_l(0,t^+)=\frac{\sqrt{2}}{2}u_j(1,t),
            \end{equation} 
            but it is valid only for $\frac{\sqrt{2}}{2} u_j(1,t^-)\geq u_l(0,t^-)$. Otherwise it does not agree with the domain $U_l$. Instead, minimum is attained at the boundary of $U_l$, hence we arrive at \eqref{eq:case1a1}.
        \end{enumerate}
        \item[b)] $u_j(1,t^-)^2 < u_l(0,t^-)^2$\newline
        
        \vspace{-.3cm} \noindent Now the flow is opposite to the direction of a vertex (goes through the vertex to the left) and therefore values of solution after the flow should not depend on values in edges from $D_i^{in}$. We put 
        \begin{equation}\label{eq:case1b}
        u_k(0,t^+)=0, \quad u_l(0,t^+)=u_k(0,t^-) \mbox{ \ and \ } u_j(1,t^+)=u_l(0,t^-),
        \end{equation}
        where the last quantity 
        is negative. It is the only possibility.
        \end{itemize}
    \item[2.] $u_j(1,t^-)\geq 0, \qquad u_k(0,t^-) < 0, \qquad u_l(0,t^-) \geq 0$\newline
        
    \vspace{-.3cm} \noindent This case is analogical to 1. due to the symmetry of the honeycomb tree. 
    
    \item[3.] $u_j(1,t^-)\leq  0, \qquad u_k(0,t^-) \geq 0, \qquad u_l(0,t^-) \geq 0$\newline
        
    \vspace{-.3cm} \noindent This case is trivial since the mass flows in the direction opposite to the vertex at all edges and the vertex becomes a kind of source. The only possible boundary constraint is
\begin{equation}\label{eq:case3}
    u_j(1,t^+)=u_k(0,t^+)=u_l(0,t^+)=0.
\end{equation}

    \item[4.] $u_j(1,t) \geq 0, \qquad u_k(0,t) \leq 0, \qquad u_l(0,t) \leq 0$\newline
    
    \vspace{-.3cm} \noindent Now the situation is more interesting since the vertex resembles a sink and again there is a need to specify the direction of a flow.
    \begin{itemize}
        \item[a)] $u_j^2(1,t^-) \leq u_k^2(0,t^-) + u_l^2(0,t^-)$\newline
        
    \vspace{-.3cm} \noindent The flow is opposite to the direction of a vertex (goes through the vertex to the left) and the shock wave appears on the edge $e_j$. Obviously $u_z(0,t^+)=u_z(0,t^-)$ for $z=k,l$ and
    \begin{equation}\label{eq:case4a}
     u_j(1,t^+) = -\sqrt{u_k^2(0,t^-) + u_l^2(0,t^-)}.
    \end{equation}
    \item[b)] $u_j(1,t^-)^2 > u_k^2(0,t^-) + u_l^2(0,t^-)$\newline
        
    \vspace{-.3cm} \noindent The flow agrees with the direction of a vertex (goes through the vertex to the right) and the shock wave appears on the edge $e_j$ and we need to choose the condition for $e_k(0)$ and $e_l(0)$ at $t^+$. Again by the energy maximization methods we have two options.
    \begin{itemize}
        \item[--] for $k<j$ we repeat condition \eqref{eq:case1a1},
        \item[--] for $k>j$
        \begin{equation}\label{eq:case4b2}
        u_k(0,t^+)=-u_k(0,t^-) \mbox{ \ and \ } u_l(0,t^+)=\sqrt{u_j(1,t^-)^2 - u_k(0,t^-)^2}.
        \end{equation}
    \end{itemize}
    While in the case of minimization
    \begin{itemize}
        \item[--] for $\frac{\sqrt{2}}{2}u_j(1,t^-)>\max(-u_k(1,t^-),-u_l(1,t^-))$ we have \eqref{eq:case1a3}
         \item[--] for $\frac{\sqrt{2}}{2}u_j(1,t^-)>-u_k(1,t^-)$ and $\frac{\sqrt{2}}{2}u_j(1,t^-)<-u_l(0,t^-)=\frac{\sqrt{2}}{2}u_j(1,t^-)+\alpha$, $\alpha^2>\sqrt{2}u_j$ we arrive at \eqref{eq:case1a1}.
         \item[--] finally for $\frac{\sqrt{2}}{2}u_j(1,t^-)>-u_l(1,t^-)$ and $\frac{\sqrt{2}}{2}u_j(1,t^-)<-u_k(0,t^-)=\frac{\sqrt{2}}{2}u_j(1,t^-)+\alpha$, $\alpha^2>\sqrt{2}u_j$ we obtain \eqref{eq:case4b2}.
    \end{itemize}
    \end{itemize}
     
    \end{enumerate}


\subsection{Different sign solutions}

In this part we construct an approximation of a solution which consists piece-wise of elements from classes $\mathcal{W}^+$ and $\mathcal{W}^-$. We say that $f\in \mathcal{W}^-$ if $-f \in \mathcal{W}^+$, for $\mathcal{W}^+$ defined in \eqref{eq:W+}. Let us explain now how to stitch two mentioned types of solutions. 

Let $(U_k)_{k\in K}$ be a \emph{partition of a set $d(E)$ of metric edges} of $\mathcal{G}=(G,d)$, namely a family of closed and connected intervals such that for any $U_k$ there exits exactly one metric edge $e_{k_j}$ such that $U_k\subset e_{k_j}$,
\begin{equation*}
    \bigcup_{k\in K} U_k = d(E)\qquad \text{and}\qquad \text{int } U_k \cap \text{int } U_l= \emptyset \quad \mbox{ for } k\neq l.
\end{equation*}
Define now a class of solutions $\mathcal{W}$ such that for any fixed $\mathring{u} \in TV(\mathcal{G})$
\begin{multline}\label{eq:W}
    u \in \mathcal{W} \mbox{ \ \ iff \ \ } 
    \{  u \in B(\mathcal{G}): \mbox{there exists a partition}\,\, (U_k)_{k\in K}\,\,\mbox{of a set of metric edges}\,\,d(E) \\
    \mbox{such that either }\left.u\right|_{U_k}\in \mathcal{W}^+  \mbox{  or} \left.u\right|_{U_k}\in \mathcal{W}^- \}.
\end{multline}
\begin{prop}\label{prop:W}
Let $\mathcal{G}$ be a metric honeycomb tree. 
Then the class $\mathcal{W}$ is preserved by the flow generated by the Burgers' equation \eqref{eq:p_Burgers} and the total variation norm is controlled in time, namely
\begin{equation}\label{eq:fin_est}
    \sup_{t\in [0,T]} \int_{\mathcal{G}} |\partial_x u^2| dx + \sum_j \int_0^T (|\partial_t u^2|(0,t)
    +|\partial_t u^2|(1,t)) dt \leq C\int_{\mathcal{G}} |\partial_x \mathring{u}^2| dx +
    CT\|u\|_{\infty}^3.
\end{equation}
\end{prop}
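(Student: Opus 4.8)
The plan is to establish the two assertions of Proposition~\ref{prop:W} --- invariance of the class $\mathcal{W}$ and the total variation bound \eqref{eq:fin_est} --- by combining the edge-interior analysis of the scalar Burgers' equation with the vertex bookkeeping already prepared in Proposition~\ref{prop:W^+}, Lemma~\ref{lem:p_tv-est}, and the case distinction of Subsection~\ref{sec:gen_trans}. As in those proofs, I would split the metric graph into the one-dimensional dynamics on the interior of each edge and the transmission at vertices, treating a general vertex through the join/split decomposition \eqref{eq:v'&''} used for $\mathcal{W}^+$.

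For invariance I would first argue on a single edge. Restricted to an edge, a function in $\mathcal{W}$ is a finite concatenation of maximal monotone, sign-definite pieces: each non-decreasing non-negative piece lies in $\mathcal{W}^+$ and each non-increasing non-positive piece in $\mathcal{W}^-$. Scalar theory preserves this structure within each piece, since a $\mathcal{W}^+$ piece spreads as a rarefaction and stays in $\mathcal{W}^+$, while a $\mathcal{W}^-$ piece forms shocks governed by \eqref{eq:RH&L} and stays in $\mathcal{W}^-$. At an interface between two pieces the sign of the velocity jump decides the outcome: a converging interface (a $\mathcal{W}^+$ piece to the left of a $\mathcal{W}^-$ piece, i.e. a downward jump) produces a Lax shock that merges the two pieces, whereas a diverging interface is resolved by refining the partition $(U_k)$. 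Finiteness of the number of jumps of $\mathring u$ together with finiteness of $\mathcal{G}$ keeps the number of partition elements bounded, exactly as in Proposition~\ref{prop:W^+}. At the vertices I would run through Cases~I--III and check that each rule returns piecewise monotone, sign-definite data: the outputs \eqref{eq:pcase1a}--\eqref{eq:pcase1b}, \eqref{eq:case1a1}--\eqref{eq:case1b}, \eqref{eq:case3}, and \eqref{eq:case4a}--\eqref{eq:case4b2} are always copies of incoming traces, their negatives, zeros, or the Kirchhoff square-root combination, so the outgoing traces feed $\mathcal{W}^{\pm}$ data into the adjacent edges.

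For the estimate, I would reproduce the per-edge differential identity \eqref{tv1}, but now track both boundary terms, since for mixed signs the flow may enter an edge at either endpoint; the Rankine--Hugoniot/Lax computation of Lemma~\ref{lem:p_tv-est} shows that interior jumps and rarefactions do not increase $\int_{e_j}|\partial_x u_j^2|\,dx$. Summing over edges with the multiplicities $2^{\kappa_j}$ appearing in \eqref{eq:1k_est}--\eqref{eq:2k_est} and dominating each outgoing boundary term by the incoming ones through the Kirchhoff relation (the $\theta_z\le 1$ mechanism, applied now at both kinds of honeycomb vertices) controls the left-hand side of \eqref{eq:fin_est} by the initial variation. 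The genuinely new feature relative to Lemma~\ref{lem:p_tv-est} is the additive term $CT\|u\|_\infty^3$: total variation is no longer merely dissipated but can be created whenever the flow reverses through a vertex or a new shock is injected (Cases~1b, 4a, 4b). Each such event contributes a boundary flux $|\partial_t u^2|$ bounded pointwise by $\|u\|_\infty^3$, and there are only finitely many vertices, so integrating over $[0,T]$ yields the stated $CT\|u\|_\infty^3$.

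The main obstacle is exactly this control of variation production in the mixed-sign regime. A diverging interface spawns a rarefaction crossing zero whose negative, increasing part fits neither $\mathcal{W}^+$ nor $\mathcal{W}^-$, so I would have to verify that the rules of Subsection~\ref{sec:gen_trans} never produce such a configuration at a vertex and that the initial partition already excludes it within each edge. Equally delicate is showing that each collision at a second-kind vertex injects only a bounded amount of new variation --- quantified by $\|u\|_\infty^3$ --- and that the sign bookkeeping at the two vertex kinds closes consistently, so that the weighted sum telescopes from the sinks back to the sources as it did in Lemma~\ref{lem:p_tv-est}.
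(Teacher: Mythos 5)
Your overall architecture (decompose into sign-definite pieces, evolve each by the scalar theory, stitch at interfaces by Lax shocks or rarefactions, and handle vertices through the case list of Subsection \ref{sec:gen_trans}) matches the paper's Steps 1--3, even though the paper phrases Step 1 as evolving the localized data $\mathring{u}^{(k)}=\mathring{u}\chi_{U_k}$ separately and then defining the interaction at the contact points $\xi(t)$, rather than tracking maximal monotone pieces inside one solution. The substantive divergence, and the gap, is in how you produce the estimate \eqref{eq:fin_est}.

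First, your mechanism for the term $CT\|u\|_\infty^3$ does not work as stated. You claim each sign-reversal or shock-injection event ``contributes a boundary flux $|\partial_t u^2|$ bounded pointwise by $\|u\|_\infty^3$.'' But $|\partial_t u^2|=2|u\,u_t|=2|u^2\partial_x u|$, and $\partial_x u$ is not controlled by $\|u\|_\infty$; the quantity $\int_0^T|\partial_t u^2|(v,t)\,dt$ is a temporal variation at the vertex, not a pointwise-in-time cubic bound. In the paper the cubic term has a different origin: one multiplies by a cutoff $\pi$ supported in the interior of an edge (or in a neighbourhood of a vertex), and the commutator term $u^2\pi_x$ in $2\partial_t(\pi u_j)+2u_j\partial_x(\pi u_j)-u_j^2\pi_x=0$ yields $\frac{d}{dt}\int_{e_j}|\partial_x(\pi u_j^2)|\,dx\leq\|\pi_x\|_\infty\|u_j\|_\infty^3$, which integrates to $CT\|u\|_\infty^3$. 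Second, you propose to run the same global telescoping with multiplicities $2^{\kappa_j}$ as in Lemma \ref{lem:p_tv-est}; the paper explicitly observes that in the mixed-sign regime the per-edge identity ``does not deliver needed information, since in general not only we fail to control the boundary terms, but also the sign of the solution at the ends of the edge'' --- this is precisely why the localization is introduced, with the Kirchhoff relation \eqref{cond-dt} closing the boundary terms only locally around each vertex. Third, you do not address the point that the vertex output (e.g.\ $u_l^2(0,t)=u_j^2(1,t)-u_k^2(0,t)$ in Case III.1a) need not lie in $\mathcal{W}^+_{opp}$, so the boundary datum must be replaced by an approximant $u_l^{new}(0,\cdot)\in\mathcal{W}^+_{opp}$ with $\int_0^T|\partial_t (u_l^{new})^2|(0,t)\,dt$ no larger and $L^1$-error at most $\epsilon$; without this the $\mathcal{W}$-class is not actually preserved across vertices, which is the first assertion of the proposition.
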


\begin{proof} We prove the proposition by stitching the solutions from $\mathcal{W}^+$ and $\mathcal{W}^-$ in several steps. 

\smallskip

{\sc Step 1.} In order to construct the general solution we introduce auxiliary solutions related to each of $U_k$. Let $u^{(k)}$ be a solution to the Burgers' equation on $\mathcal{G}$ initiated by the initial datum
\begin{equation}
    \mathring{u}^{(k)}=\mathring{u} \chi_{U_k}.
\end{equation}
Since $\mathring{u}\in \mathcal{W}^{\pm}$, it follows that $\mathring{u}^{(k)}\in \mathcal{W}^{\pm}$ and consequently, by Proposition \ref{prop:W^+}, $u^{(k)}$ is a constant sign solution over the graph.

\smallskip 

{\sc Step 2.} Now we define the interaction between two neighbouring solutions in the interior of $e_j$. Introduce function $u^{(kl)}$, for two chosen  intervals $U_k$ and $U_l$ such that $D_k \cap D_l \ni \xi(0)$ for some $\xi(0)\in (0,l_j)$. 
We need to determine the evolution of the contact point $\xi(t)$ starting from $\xi(0)$. Without loss of generality assume that $\min U_k<\min U_l$.
\begin{enumerate}
    \item[(i)] If $u^{(k)}<0<u^{(k)}$, then solution in the neighbourhood of $\xi(0)$ is constructed as a rarefaction wave, namely
    \begin{equation*}
        u^{(kl)}(x,t)=\left\{\begin{array}{ll}
            u^{(k)}(x,t)&\text{for}\,\,\frac{x}{t}<u_k(\xi(t),t),\\[.1cm]
            \frac{x}{t}&\text{for}\,\,u_k(\xi(t),t)<\frac{x}{t}<u_l(\xi(t),t),\\ [.1cm]
            u^{(l)}(x,t)&\text{for}\,\,u_l(\xi(t),t)<\frac{x}{t}.
        \end{array}\right.
    \end{equation*}
    \item[(ii)] If $u^{(k)}>0>u^{(k)}$, then $u^{(k)}$ and $u^{(l)}$ are stitched together by the Rankine–Hugoniot condition 
    \begin{equation*}
 \frac{d}{dt} s(t)=\frac{u^{(k)}(\xi(t),t)+
    u^{(l)}(\xi(t),t)}{2}.
\end{equation*}
In the neighbourhood of $(\xi(0),0)$ we have
    \begin{equation*}
        u^{(kl)}(x,t)=u^{(k)}(x,t) \mbox{ \ for \ }
        x < \xi(t) \mbox{ \ \ and \ \ } 
        u^{(kl)}(x,t)=u^{(l)}(x,t) \mbox{ \ for \ }
        x> \xi(t).
    \end{equation*}

\end{enumerate}



\smallskip 

{\sc Step 3.} Finally, we concentrate on the case of changing the sign at the vertex using the transmission solver derived in Subsection \ref{sec:gen_trans}. For given conditions in vertices at $t^-$ there exists a unique representation after the flow through the vertex, at $t^+$. 
Since the flow through the vertex $\mathcal{D}_i$ is fixed, we solve the equations at outgoing edges $\mathcal{D}_i^{out}$ knowing that at least locally near the vertex the solutions are of constant sign.

Let us become more precise about the choice of the time interval where the solution is defined. We consider the case III.1a) from Subsection \ref{sec:gen_trans}. We build the solution on edges $e_j$ and $e_l$ for some time $T_1>0$, and the transmission condition gives the boundary data for the equation at $e_k$, at least in a vicinity of the vertex. Then we solve Burgers' equation in the interior of an edge $e_k$ obtaining the solution locally in time. In general it may happen that the solution $u_k$ stays positive at the vertex just for time $T_2>0$, which can be smaller than $T_1$. Hence, the procedure of deriving the solution in the neighbourhood of the vertex is well-defined in time being the minimum of $T_1$ and $T_2$. Nevertheless, since the speed of a wave propagation and the number of vertices is finite, considered time always exists.  
Note that the construction of the solution bases on approximation in $\mathcal{W}$-class. It follows that transmission conditions need to be modified by a suitable approximation, with some error which is controlled. To preserve the $\mathcal{W}$ -- class the boundary term must be in $\mathcal{W}_{opp}$, and this modification is explained in the next step.

\smallskip 

{\sc Step 4.} Steps 1--3 allow for a unique definition of solution for any time since the structure of the $\mathcal{W}$-class guarantees that the solutions locally are of constant sign on edges and they are uniquely determined in vertices. 
At the end we need to estimate $TV$-norm. Repeating the considerations from the proof of Lemma \ref{lem:p_tv-est}, we note that for each edge we find the following bound
\begin{equation}
    \frac{d}{dt} \int_{e_j} |\partial_x u_j^2|dx 
    +|\partial_t u_j^2(1,t)| \,{\rm sgn\,} (u_j(1,t)) - 
    |\partial_t u_j^2(0,t)|\,{\rm sgn  \,}(u_j(0,t)) \leq 0.
\end{equation}
Of course, the above inequality does not deliver needed information, since in general not only we fail to control the boundary terms, but also the sign of the solution at the ends of the edge. 

However, based on construction proposed for $\mathcal{W}^+$-functions, we obtain local versions of the above inequality. Introduce $\pi:e_j \to [0,1]$ a smooth function such that $supp\, \pi \subset \subset e_j$ and $\pi\equiv 1$ on the internal interval in $e_j$. Then
\begin{equation}
   2\partial_t(\pi u_j) + 2u_j \partial_x(\pi u_j)- u_j^2 \pi_x=0
\end{equation}
Then we find
\begin{equation}
    \frac{d}{dt} \int_{e_j} |\partial_x
    (\pi u_j^2)|dx \leq \left\| \pi_x\right\|_\infty
    \left\|u_j\right\|^3_\infty.
\end{equation}
So it gives information about the interior of edges. 

However the key element is in vertices so for each vertex we use again the localization argument. Again, in order to explain the construction of local estimation we consider a concrete case from Subsection \ref{sec:gen_trans}, namely the case III.1a). 
Let us remind that solution is given by $u_l^2(0,t)=u_j^2(1,t)-u_k^2(0,t)$. Before we start the estimation, let us look closer at this definition. We aim at construction of the flow in the $\mathcal{W}$ -- class, so the boundary condition is required to be in $\mathcal{W}^+_{opp}$. However the above formula does not ensure that it holds. But from (\ref{cond-dt}) we deduce that $\int_0^T |\partial_t u_l^2|(0,t) dt$ is bounded. Thus, given $\epsilon >0$ we find a new $u_l^{new}(0,t) \in
\mathcal{W}^+_{opp}$ such that $\int_0^T |\partial_t {u_l^{new}}^2|(0,t) dt \leq \int_0^T |\partial_t u_l^2|(0,t) dt$ and $\|u_l^{new}(0,\cdot)-u_l(0,\cdot)\|_{L^1(0,T)} \leq \epsilon$. This way the $\mathcal{W}$ structure of solutions is preserved, and the $TV$ - norm over $\mathcal{G}$ is controlled too.

Take $\pi$ defined around the vertex $v_i$, being $1$ over 
a sufficiently large cover of $v_i$ and supported in 
$e_j\cup e_k \cup e_j$.
Then we find
\begin{equation}
    \frac{d}{dt} \int_{e_j} |\partial_x
    (\pi u_j^2)|dx 
    +|\partial_t u^2_j|(1,t) \leq \|\pi_x\|_\infty 
    \|u\|^3_\infty.
\end{equation}   
\begin{equation}
    \frac{d}{dt} \int_{e_k} |\partial_x
    (\pi u_k^2)|dx 
    +|\partial_t u^2_k|(0,t) \leq \|\pi_x\|_\infty 
    \|u\|^3_\infty.
\end{equation}
\begin{equation}
    \frac{d}{dt} \int_{e_l} |\partial_x
    (\pi u_l^2)|dx 
     \leq |\partial_t u^2_l|(0,t)+ \|\pi_x\|_\infty 
    \|u\|^3_\infty.
\end{equation}
Since $u_l^2(0,t)=u_j^2(1,t)-u_k^2(0,t)$, we conclude that
\begin{equation}\label{cond-dt}
    |\partial_t u_l^2|(0,t)\leq |\partial _t u_j^2|(1,t) +|\partial_t u_k^2|(0,t).
\end{equation}
So summing all together we get 
\begin{equation}\label{www1}
    \frac{d}{dt} \left( \int_{e_j} |\partial_x
    (\pi u_j^2)|dx+\int_{e_l} |\partial_x
    (\pi u_l^2)|dx +\int_{e_k} |\partial_x
    (\pi u_k^2)|dx\right) \leq C\|\pi_x\|_\infty 
    \|u\|^3_\infty.
\end{equation}
Although above information is sufficient, we can have even stronger condition which controls the transmission relation. Because of form of the inequalities for $e_j$
and $e_k$, taking it twice, we improve (\ref{www1}), namely  
\begin{multline}\label{www2}
    \frac{d}{dt} \left( \int_{e_j} |\partial_x
    (\pi u_j^2)|dx+\int_{e_l} |\partial_x
    (\pi u_l^2)|dx +\int_{e_k} |\partial_x
    (\pi u_k^2)|dx\right)
    \\
    +\left(
    |\partial_t u^2_l|(0,t)+|\partial_t u^2_k|(0,t)+|\partial_t u^2_j|(1,t)\right)
    \leq C\|\pi_x\|_\infty 
    \|u\|^3_\infty.
\end{multline}

In the general case, the signs at the vertex may be different, we get the better info with boundary term for the case when the flow at the edge comes into the vertex. So such inequality we make double, then we obtain (\ref{www2}) for the general case. Note that there is only one case where there is no incoming flow, but then all boundary terms are just zero, so the time derivatives vanish too.

Finally, repeating the steps from Lemma \ref{lem:p_tv-est} in the general case, we get \eqref{eq:fin_est}.
\end{proof}



\subsection{Existence of solution}\label{sec:gen_exist}

In the last part of this section we show the following existence result, that goes in line with Definition \ref{def:conserv}.

\medskip 
\begin{thm}\label{thm:main}
Let $\mathring{u} \in TV(\mathcal{G})$. There exists a weak solution to the Burgers' equation on graph $\mathcal{G}$ such that
\begin{equation*}
    u^2 \in L_\infty(0,T;TV(\mathcal{G})).
\end{equation*}
\end{thm}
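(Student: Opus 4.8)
The plan is to obtain $u$ as a limit of the stitched solutions produced in Proposition~\ref{prop:W}, exploiting the uniform total variation bound \eqref{eq:fin_est}. First I would fix $\mathring{u}\in TV(\mathcal{G})$ and approximate it by a sequence $(\mathring{u}_\epsilon)_{\epsilon>0}$ of initial data in the class $\mathcal{W}$, i.e.\ functions that are piece-wise $\mathcal{W}^+$ or $\mathcal{W}^-$ on a finite partition of $d(E)$, chosen so that $\mathring{u}_\epsilon^2\to \mathring{u}^2$ in $L^1(\mathcal{G})$ while $\|\mathring{u}_\epsilon^2\|_{TV(\mathcal{G})}\le \|\mathring{u}^2\|_{TV(\mathcal{G})}$. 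Such an approximation is available because piece-wise monotone functions with finitely many jumps, whose Burgers evolution stays in $\mathcal{W}$, are dense in $TV(\mathcal{G})$; this is the same density already invoked in Lemma~\ref{lem:p_tv-est}. For each $\epsilon$, Proposition~\ref{prop:W} furnishes a weak solution $u_\epsilon\in\mathcal{W}$ on $[0,T]$ together with the estimate \eqref{eq:fin_est}, so that $\sup_{t\in[0,T]}\|u_\epsilon^2(t)\|_{TV(\mathcal{G})}$ and $\sum_j\int_0^T(|\partial_t u_\epsilon^2|(0,t)+|\partial_t u_\epsilon^2|(1,t))\,dt$ are bounded uniformly in $\epsilon$ in terms of $\|\mathring{u}^2\|_{TV(\mathcal{G})}$ and $T\|u_\epsilon\|_\infty^3$; the latter is controlled because finite propagation speed on a finite tree keeps $\|u_\epsilon\|_\infty$ bounded by $\|\mathring{u}\|_\infty$.

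Next I would extract a convergent subsequence. On each edge $e_j$ the bound on $\sup_t\|u_{j\epsilon}^2\|_{TV(e_j)}$ together with the bound on $\partial_t u_{j\epsilon}^2$ in $L^\infty(0,T;\mathcal{M}(e_j))$ provides, via the same Helly-type compactness argument (bounded variation in space, controlled oscillation in time) used at the end of the proof of Lemma~\ref{lem:p_tv-est}, a subsequence with
\begin{equation*}
u_{j\epsilon}\to u_j \text{ in } L^1([0,T]\times e_j),\qquad u_{j\epsilon}\rightharpoonup^{\ast} u_j \text{ in } L^\infty,\qquad u_{j\epsilon}|_{x=0,l_j}\to u_j|_{x=0,l_j}\text{ in } L^1(0,T),
\end{equation*}
and hence pointwise a.e.\ both in the interior and at the traces, since the local sign of $u_\epsilon$ is constant on each piece of the partition. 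Lower semicontinuity of the total variation then yields $u^2\in L^\infty(0,T;TV(\mathcal{G}))$, which is precisely the asserted regularity.

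It remains to check that the limit $u$ is a weak solution. In the interior of each edge, passing to the limit in the weak formulation \eqref{eq:p_Burgers_e2} is routine: the linear term $u_\epsilon\partial_t\phi$ passes by weak-$*$ convergence, while the quadratic term $u_\epsilon^2\partial_x\phi$ passes because $u_\epsilon\to u$ strongly in $L^1$ with a uniform $L^\infty$ bound, so $u_\epsilon^2\to u^2$ in $L^1$. \textbf{The main obstacle is the passage to the limit in the vertex transmission conditions.} The conditions derived in Subsection~\ref{sec:gen_trans} are genuinely nonlinear and sign-dependent: the flow direction $\mathcal{D}_i$, the active edge sets $\mathcal{F}_i(t)$, and the resulting algebraic relations among traces all change with the sign configuration of $u_j(v_i,t^-)$. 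Because the traces converge only in $L^1(0,T)$ and a.e.\ in $t$, I would argue that for a.e.\ $t$ the sign pattern of the limiting traces is eventually realised by the approximations, so that the Kirchhoff identity \eqref{eq:p_Burgers_Kc} and the case-by-case relations \eqref{eq:case1a1}--\eqref{eq:case4b2} are stable under the limit; the controlled modification of the boundary data into the $\mathcal{W}^+_{opp}$ class from Step~4 of Proposition~\ref{prop:W}, whose $L^1$ error tends to $0$ with $\epsilon$, guarantees that the approximate transmission relations differ from the exact ones by a term vanishing in the limit. Verifying this stability on the at most countable set of times where the sign configuration switches, and confirming that the edge-wise Oleinik inequality survives the limit so the stitched object is admissible, is the delicate point; away from those exceptional times the relations are continuous in the traces and the passage to the limit is immediate.
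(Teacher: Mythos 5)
Your proposal is correct and follows essentially the same route as the paper: approximate $\mathring{u}$ in the $\mathcal{W}$-class, invoke Proposition~\ref{prop:W} and the uniform bound \eqref{eq:fin_est}, extract a convergent subsequence (the paper uses Lions--Aubin where you use a Helly-type argument, an immaterial difference here), and pass to the limit using strong $L^1$ convergence for the quadratic flux and trace convergence at the vertices. If anything, your discussion of the stability of the sign-dependent transmission relations under the limit is more explicit than the paper's, which compresses that step into a brief remark about pointwise convergence of the traces.
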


\medskip 

\begin{proof}

For given $\mathring{u} \in TV(\mathcal{G})$, let us proceed in the following steps.

{\sc Step 1.} Firstly, we approximate the initial condition. For given $\epsilon > 0$,
one finds $\mathring{u}_\epsilon=(\mathring{u}_\epsilon)_++(\mathring{u}_\epsilon)_-$ such that $(\mathring{u}_\epsilon)_+ \in \mathcal{W}^+$, 
$(\mathring{u}_\epsilon)_- \in \mathcal{W}^-$ and 
\begin{equation*}
    \|\mathring{u}-\mathring{u}_\epsilon\|_{L^1(\mathcal{G})} < \epsilon \mbox{  \ \ and \ \ } 
    \|\mathring{u}_\epsilon^2\|_{TV(\mathcal{G})}\leq \|\mathring{u}^2\|_{TV(\mathcal{G})}.
\end{equation*}
We solve the equation starting from $\mathring{u}_\epsilon$ in the class $\mathcal{W}$ according to the steps presented in Proposition \ref{prop:W}. Then the uniform bound
\begin{equation*}
    \sup_{t\in (0,T)} \|u_\epsilon^2(t)\|_{TV(\mathcal{G})} \leq C,
\mbox{ \ \ can be found, as well as \  \ }
    \sup_{t\in (0,T)} \|\partial_t u_\epsilon(t)\|_{\mathcal{M(G)}} \leq C.
\end{equation*}
{\sc Step 2.}
Using Lions-Aubin lemma we find a subsequence such that
\begin{equation*}
    u_\epsilon \to u^* \in L^p(\mathcal{G} \times (0,T)) \mbox{ \ \ for  any \ } p<\infty,
\end{equation*}
hence $u^*$ is a weak solution. Weak limits guarantee that 
\begin{equation}
    u\in L^\infty(\mathcal{G}\times (0,T)) 
    \mbox{ \ and \ } u\in L^\infty(0,T;TV(\mathcal{G})).
\end{equation}

{\sc Step 3.} The boundary conditions follow from the information carried by (\ref{eq:fin_est}), while in the limit this condition can be found only as measure. The compactness ensures us that the approximating sequence goes strongly at the boundary point-wisely since then $u_\epsilon \to u$ in $L^p(0,T)$ in the vertices in time.

{\sc Step 4.} As the last step let us comment on the uniqueness. The above properties of solutions to the Burgers' equation fulfil the conditions for the classical mono-dimensional case. We obtain an entropy solution as a bounded distributional solution with the bound (\ref{eq:bound}). 

We claim that the solution is unique. Unfortunately, in order to restate the proof from Evans textbook, see \cite{Evans}, the method of characteristics on metric graphs for the transport type equation with smooth coefficients is needed. To our best knowledge still there is no such result in the literature.
It will be the subject of our further investigations, hence at this moment we state the uniqueness only as a conjecture.

\end{proof}

\section{Conclusions}\label{sec:concl} 

At the end of this paper we return to our question from the introduction in order to  deliver an interpretation of Burgers' equation in the spirit of wave interference. 

Let us start again with mono-dimensional equation, namely \eqref{eq:BE} with $D=\mathbb{R}$. 
 The physical interpretation of the classical theory of Burgers' equation  delivers a poor picture of the possibility to describe more complex phenomenon than the motion of one wave. Let us explain the issue. Think about the following initial configuration on the line.
\begin{equation}\label{eq:init_cond}
    u|_{t=0}=\chi_{[-4,-3]} - \chi_{[3,4]}.
\end{equation}
For simplicity consider distributional solutions being a shift with a speed determined by the Rankine-Hugoniot condition. It means that solution at least for small time is given by
\begin{equation*}
    u(x,t)=\chi_{[-4+\frac12 t,-3+\frac12 t]}(x) - \chi_{[3-\frac12 t, 4 -\frac12 t]}(x).
\end{equation*}
Furthermore, 
for a long time the solution disappears. In the case of different velocities of waves, the stronger one overtakes the smaller one which is the consequence of the weak formulation and the regime of the Rankine-Hugoniot conditions. From mathematical perspective we are not allowed to obtain more interesting configurations, since the regime of distributional solutions makes the choice of possible evolution very restrictive. However looking at the solution in from physical perspective 
the behaviour described above is not so obvious. 
We can expect for example passing a smaller waves, at least partially, through a larger one. 

To overcome this obstacle we put our attention on   the choice of the domain $D$. We follow the idea, that equation shall be simple, but the area of action shall give possibility to consider different branches of solutions. By definition mono-dimensional object with many possible paths is just a graph. Therefore we rewrite the system into
\begin{equation*}
    \partial_t u +u\partial_x u=0 \mbox{ on } \mathcal{G} \times [0,T), \qquad u|_{t=0}= \mathring{u} \mbox{ \ at \ } \mathcal{G},
\end{equation*}
where $\mathcal{G}$ is the metric graph. This way we shall be able to obtain a rich structure of solutions even for initial data like \eqref{eq:init_cond}. Let us look at the following example.

\begin{exam}
Let $\mathcal{G}=(G,d)$ be the following metric tree $V=\left\{v_1,v_2\right\}$, $E=\left\{e_1,\ldots,e_4\right\}$,
\begin{equation*}
\mathcal{L}(e_i)=\left\{\begin{array}{cc}
     9& \text{for}\,\,i=1,4\\
     2&  \text{for}\,\,i=2,3
\end{array}\right.,\quad \phi=    \left[\begin{array}{cccc}
     1&-1&-1&0  \\
     0&1&1&-1 
\end{array}\right],\quad d(e_i)=\left\{\begin{array}{ll}
     [0,9]& \text{for}\,\,i=1,4\\\relax
     [0,2]& \text{for}\,\,i=2,3
\end{array}\right..
\end{equation*}
Note that $\mathcal{G}$ can be interpreted as interval $[-10,10]$ splited at $(-1,0)$ into two and joined again at $(1,0)$. 

\bigskip

\begin{center}
\begin{tikzpicture}
\draw (-2,0) -- (-1.5,0.3);
\draw (-1.5,0.3) -- (1.5,0.3);
\draw (1.5,0.3) -- (2,0);
\draw (2,0) -- (6,0);
\draw (-6,0) -- (-2,0);
\draw (-2,0) -- (-1.5,-0.3);
\draw (-1.5,-0.3) -- (1.5,-0.3);
\draw (1.5,-0.3) -- (2,0);
\put(-120,5){$e_1$};
\put(120,5){$e_4$};
\put(0,15){$e_2$};
\put(0,-25){$e_3$};
\put(-70,3){$v_1$};
\put(60,3){$v_2$};
\end{tikzpicture}
\end{center}

\bigskip

\noindent We consider Burgers' equation on $\mathcal{G}$ with the following initial condition
\begin{equation}\label{eq:example_ic}
    u_1|_{t=0}=\chi_{[6,7]}, \qquad
     u_4|_{t=0}=-\chi_{[2,3]}, \qquad
     u_2|_{t=0}=u_3|_{t=0}=0.
\end{equation}
Note that condition \eqref{eq:example_ic} for network is an analogue of condition \eqref{eq:init_cond} for a straight line and at time $t=0$ we can illustrate it in the following way

\bigskip

\begin{center}
\begin{tikzpicture}[auto,node distance=1.5cm,semithick]
\tikzstyle{every state}=[fill=none, draw=none, text=black]
\draw (-2,0) -- (-1.5,0.3);
\draw (-1.5,0.3) -- (1.5,0.3);
\draw (1.5,0.3) -- (2,0);
\draw (2,0) -- (6,0);
\draw (-6,0) -- (-2,0);
\draw (-2,0) -- (-1.5,-0.3);
\draw (-1.5,-0.3) -- (1.5,-0.3);
\draw (1.5,-0.3) -- (2,0);
\put(-70,3){$v_1$};
\put(60,3){$v_2$};
\put(-140,7){\vector(1,0){30}};
\put(140,7){\vector(-1,0){30}}
\end{tikzpicture}
\end{center}

\bigskip
\end{exam}

 To avoid problems with definitions and argumentation, we just present very schematic behaviour of the proposed system. We assume that the waves are 
non-physical, namely they are simple 
shifts in time of characteristic functions of kind $\chi_{[\frac12 t, 1+\frac 12 t]}(x)$. One can obviously consider examples base on the solutions being combination of rarefaction and shock waves, like in formula \cite[Sec.3.4.1b~Exam.3]{Evans}, but then
we loose the schematic form of the considerations. The character of dynamics will be determined by the rules in vertices, describing the partition of the solutions onto different paths. Consider three situations:

\smallskip

{\sc Case I. } In vertex $v_1$ the wave from edge $e_1$ goes on $e_2$, and in vertex $v_2$ the wave from $e_4$ goes on $e_3$. So at $t=t_1$ suitably chosen we have 
\bigskip
\begin{center}
\begin{tikzpicture}[auto,node distance=1.5cm,semithick]
\tikzstyle{every state}=[fill=none, draw=none, text=black]
\draw (-2,0) -- (-1.5,0.3);
\draw (-1.5,0.3) -- (1.5,0.3);
\draw (1.5,0.3) -- (2,0);
\draw (2,0) -- (6,0);
\draw (-6,0) -- (-2,0);
\draw (-2,0) -- (-1.5,-0.3);
\draw (-1.5,-0.3) -- (1.5,-0.3);
\draw (1.5,-0.3) -- (2,0);
\put(-70,3){$v_1$};
\put(60,3){$v_2$};
\put(-35,15){\vector(1,0){30}};
\put(35,-4){\vector(-1,0){30}}
\end{tikzpicture}
\end{center}

\bigskip

\noindent
Then waves pass through without direct interaction, so the energy is not lost. For large time 
we obtain the solution of the form
\begin{equation}
    u(x,t)= -\chi_{[3-\frac12 t,4-\frac12 t]}+
    \chi_{[-4+\frac12 t,-3+\frac12 t]},
\end{equation}
so there is no interaction of waves. Such a result is not possible in description by the classical Burgers equation.
 
\smallskip

{\sc Case II.}
In vertex $v_1$ the wave divides into two equal parts (in the sense of energy), and the same happens for vertex $v_2$. For $t=t_1$ we have 
\bigskip
\begin{center}
\begin{tikzpicture}[auto,node distance=1.5cm,semithick]
\tikzstyle{every state}=[fill=none, draw=none, text=black]
\draw (-2,0) -- (-1.5,0.3);
\draw (-1.5,0.3) -- (1.5,0.3);
\draw (1.5,0.3) -- (2,0);
\draw (2,0) -- (6,0);
\draw (-6,0) -- (-2,0);
\draw (-2,0) -- (-1.5,-0.3);
\draw (-1.5,-0.3) -- (1.5,-0.3);
\draw (1.5,-0.3) -- (2,0);
\put(-70,3){$v_1$};
\put(60,3){$v_2$};
\put(-35,15){\vector(1,0){20}};
\put(35,-4){\vector(-1,0){20}};
\put(-35,-4){\vector(1,0){20}};
\put(35,15){\vector(-1,0){20}}
\end{tikzpicture}
\end{center}

\bigskip

Now the waves meet on both $e_2$ and $e_3$ and since they are anti-symmetric, they annihilate. Thus for large time 
\begin{equation}
    u(x,t)= 0.
\end{equation}
This case covers the classical result of Burgers' equation, like 
without a graph.
 
\smallskip

{\sc Case III.}
In the vertex $v_1$ the wave divides into two equal parts, but in the vertex $v_2$ the wave from $e_4$ goes on the edge $e_3$. It means that the upper part of the wave goes on $e_4$, but on lower edge $e_3$ we have a shock of
two waves. For $t=t_1$
\bigskip
\begin{center}
\begin{tikzpicture}[auto,node distance=1.5cm,semithick]
\tikzstyle{every state}=[fill=none, draw=none, text=black]
\draw (-2,0) -- (-1.5,0.3);
\draw (-1.5,0.3) -- (1.5,0.3);
\draw (1.5,0.3) -- (2,0);
\draw (2,0) -- (6,0);
\draw (-6,0) -- (-2,0);
\draw (-2,0) -- (-1.5,-0.3);
\draw (-1.5,-0.3) -- (1.5,-0.3);
\draw (1.5,-0.3) -- (2,0);
\put(-70,3){$v_1$};
\put(60,3){$v_2$};
\put(-35,15){\vector(1,0){20}};
\put(35,-4){\vector(-1,0){30}};
\put(-35,-4){\vector(1,0){20}};
\end{tikzpicture}
\end{center}

\bigskip

\noindent
Since the one coming from the right side is larger, the smaller one is overtaken and the wave flows on the edge $e_1$. Hence up to a small modification of time related to Rankine-Hugoniot conditions, for large time we have 
\begin{equation*}
    u(x,t)= - \chi_{[3-\frac12 t,4-\frac12 t]}+
   \frac{\sqrt{2}}{2} \chi_{[-4+\frac12 t,-3+\frac12 t]}
\end{equation*}
This case is the most interesting since we obtain a practical interference. One part is dumped while the second one is preserved in its magnitude.

\subsubsection*{Acknowledgements} The authors have been partly supported by National Science Centre grant 2018/29/B/ST1/00339 (Opus). Additionally, the research of AP was partially supported by National Science Centre grant\linebreak 2017/25/N/ST1/00787 (Preludium).

\end{document}